\newcommand{\ldb}{\{\!\!\{}
\newcommand{\rdb}{\}\!\!\}}
\newcommand{\PP}{\mathbb{P}} 
\newcommand{\CC}{\mathbb{C}}
\newcommand{\BB}{\mathcal{B}}
\newcommand{\RR}{\mathbb{R}}
\newcommand{\R}{\mathcal{R}}
\newcommand{\C}{\mathcal{C}}
\newcommand{\TT}{\mathbb{T}}
\newcommand{\QQ}{\mathbb{Q}}
\newcommand{\KK}{\mathbb{K}}
\newcommand{\conv}{\operatorname{conv}}
\newcommand{\id}{\operatorname{id}}
\newcommand{\trop}{\operatorname{trop}}
\newcommand{\Sym}{\operatorname{Sym}}
\newcommand{\init}{\operatorname{in}}
\newcommand{\subdiv}{\mathcal{S}}
\newcommand{\vertices}{\operatorname{Vert}}
\newcommand{\fvm}{{\bm{\mu}}}
\newcommand{\fM}{\textbf{M}}
\newcommand{\fL}{\textbf{L}}
\newcommand{\fd}{\textbf{d}}
\newcommand{\fn}{\textbf{n}}
\newcommand{\fw}{\textbf{w}}
\newcommand{\fB}{\textbf{B}}
\newcommand{\fS}{\textbf{S}}
\newcommand{\tnn}{\textsl{tnn}}
\newcommand{\BP}{Q}
\newcommand{\tBP}{\widetilde{Q}}
\newcommand{\BI}{\operatorname{BI}}
\newcommand{\tBI}{\widetilde{\operatorname{BI}}}
\newcommand{\bemph}[1]{\emph{#1}}
\newcommand{\TP}{\mathbb{TP}^{n-1}} 
\newcommand{\Dr}{\operatorname{Dr}} 
\newcommand{\FlDr}{\operatorname{FlDr}}
\newcommand{\Fl}{\operatorname{Fl}} 
\newcommand{\TFl}{\operatorname{TFl}} 
\theoremstyle{plain}
\newtheorem{question}{Question}
\newtheorem{proposition}{Proposition}[section]
\newtheorem{theorem}[proposition]{Theorem}
\newtheorem{lemma}[proposition]{Lemma}
\newtheorem{conjecture}[proposition]{Conjecture}
\newtheorem{corollary}[proposition]{Corollary}
\theoremstyle{definition}
\newtheorem{definition}[proposition]{Definition}
\newtheorem{example}[proposition]{Example}
\theoremstyle{remark}
\newtheorem{remark}[proposition]{Remark}
\numberwithin{equation}{section}
\begin{document}

\pagestyle{plain}

\title{Positivity for partial tropical flag varieties}
\author{Jorge Alberto Olarte}

\address
{
CUNEF Universidad, Departamento de m\'etodos cuantitativos
}
\email{jorge.olarte@cunef.edu}

\begin{abstract}
We study positivity notions for the tropicalization of type $\textsc{A}$ flag varieties and the flag Dressian.
We focus on the hollow case, where we have one constituent of rank 1 and another of corank 1.
We characterize the three different notions that exist for this case in terms of Pl\"ucker coordinates.
These notions are the tropicalization of the totally non-negative flag variety, Bruhat subdivisions and the non-negative flag Dressian.
We show the first example where the first two concepts above do not coincide.
In this case the non-negative flag Dressian equals the tropicalization of the Pl\"ucker non-negative flag variety and is equivalent to flag positroid subdivisions. 
Using these characterizations, we provide similar conditions on the Pl\"ucker coordinates for flag varieties of arbitrary rank that are necessary for each of these positivity notions, and conjecture them to be sufficient.
In particular we show that regular flag positroid subdivisions always come from the non-negative Dressian.
Along the way, we show that the set of Bruhat interval polytopes equals the set of twisted Bruhat interval polytopes and we introduce valuated flag gammoids.
\end{abstract}

\maketitle

\date{}

\setcounter{tocdepth}{1}
\tableofcontents



\section{Introduction}
The flag variety $\Fl(\fd,\KK^n)$ of rank $\fd=(d_1,\dots,d_s)$ over a field $\KK$ consists of all flags of linear spaces $L_1\subset \dots L_s \subseteq \KK^n$ of dimensions $d_1<\dots<d_s$ respectively.
Using the Pl\"ucker embedding we have that $\Fl(\fd,\KK^n)$ is a projective variety satisfying
\begin{equation}
\sum_{t_i\in T\backslash S}^s (-1)^ix_{St_i}x_{T\backslash t_i} = 0
\label{eq:incidence}
\end{equation}
for every $j\in [s-1]$ and pair of subsets $(S,T)$ where $S\in {[n] \choose d_j-1}$ and $T$ is either in ${[n] \choose d_j+1}$ (Pl\"ucker relations) or $T$ is in ${[n] \choose d_{j+1}+1}$ (incidence relations), where $t_1<\dots< t_s$ are the elements of $T\backslash S$ in order. 

Positivity in the flag variety and its tropicalization has recently gathered a lot of attention \cite{arkani2017positive,bloch2022two,boretsky,BEW,JLLO,KodamaWilliams:2015,TsukermanWilliams}.
There are two well studied notions of non-negativity for the flag variety over the reals.
The first, notion simply asks that all all the Pl\"ucker coordinates are non-negative.
This is called Pl\"ucker non-negative in \cite{bloch2022two} or naive non-negative in \cite{arkani2017positive}. 
This notion comes as a generalization of poistroids and positroid polytopes which have been well studied \cite{postnikov,ardila2016positroids,lam2020polypositroids}

The second notion, was originally introduced by Lustzig \cite{lusztig1994total,Lusztig:1998}.
For the complete flag variety, this notion is equivalent to Pl\"ucker non-negativity \cite{boretsky, bloch2022two},
and therefor this second notion is equivalent to asking whether the flag can be extended to a complete flag which is Pl\"ucker non-negative.
The part of the flag variety satisfying this condition is called the totally \bemph{non-negative} (or $\tnn$) \emph{flag variety}.

The support vector of a flag in $\Fl(\fd,\KK^n)$, that is, the set of $B$ such that $x_B\ne 0$ are the bases of a flag matroid $(M_1,\dots,M_s)$ of rank $\fd$ (see \Cref{sub:fm} for a precise definition of flag matroid).
A flag matroid $\fM$ can be encoded in a polytope $P_\fM$ which is the Minkowski sum $P_{M_1}+\dots+P_{M_s}$ of the polytopes of the matroid constituents. 
The polytopes of flag matroids in the $\tnn$ flag variety were shown to be of as special kind, namely Bruhat (interval) polytopes \cite{KodamaWilliams:2015}.
These are polytopes associated to intervals in the poset of permutations given by the Bruhat order which were further studied in \cite{TsukermanWilliams}. 
They are in correspondence with the Richardson varieties that were shown to subdivide the $\tnn$ flag variety \cite{rietsch1998total} and are also indexed by Bruhat intervals.
Parametrizations of Richardson varieties were first provided by Marsh and Rietsch in \cite{MR}.  
There have been two different definitions of Bruhat polytopes (see \Cref{def:tbru,def:bru}) which were known to give the same class of polytopes for complete rank, but this was not previously known for general rank. This is our first result:

\begin{theorem}[\Cref{thm:twisted}]
\label{thm:introbru}
The set of twisted Bruhat polytopes is the same as the set of untwisted Bruhat polytopes.
\end{theorem}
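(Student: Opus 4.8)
The plan is to deduce the statement from the complete-rank case, where the two families of polytopes are already known to agree, by exhibiting both rank-$\fd$ families as projections of the corresponding complete-rank families along a map that does not see the twist. Write $\fd=(d_1,\dots,d_s)$, let $J=[n-1]\setminus\{d_1,\dots,d_s\}$, and let $\rho_{\fd}\colon\RR^n\to\RR^n$ be the linear (Minkowski) projection that sends a complete flag matroid polytope to the flag matroid polytope of its rank-$\fd$ truncation, i.e.\ that keeps only the constituents indexed by $d_1,\dots,d_s$. First I would recall the two definitions: an untwisted Bruhat polytope of rank $\fd$ is the flag matroid polytope $P_{\fM}$ attached to a Bruhat interval $[u,w]$ read off in the parabolic quotient attached to $J$ — equivalently, by \Cref{def:bru}, the Minkowski sum of the positroid polytopes obtained by projecting $[u,w]$ to the Grassmannians $\Gr(d_i,n)$ — while a twisted Bruhat polytope is the analogous object built from a twisted Bruhat interval as in \Cref{def:tbru}.

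The argument then has two steps. The first is to show that the set of rank-$\fd$ Bruhat polytopes is the $\rho_{\fd}$-image of the set of complete-rank Bruhat polytopes, and likewise for the twisted families. For the untwisted families this is essentially the definition, together with the classical fact that projecting the Richardson variety $R_{u,w}$ of the complete flag variety to $\Fl(\fd,\KK^n)$ produces again a Richardson variety, indexed by the images of $u$ and $w$, and that conversely every Bruhat interval in the quotient lifts to $S_n$. The second step is then immediate: the known complete-rank equality transports through $\rho_{\fd}$ and yields the theorem. Hence the whole proof comes down to the twisted half of step one — checking that the twist of \Cref{def:tbru} commutes with passage to the parabolic quotient, so that twisted rank-$\fd$ polytopes are precisely the $\rho_{\fd}$-images of twisted complete-rank ones.

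This compatibility is the main obstacle, because the twist is formulated in terms of the full symmetric group (through $w_0$, the inverse map, or an associated biclosed set, depending on which equivalent description one uses) and does not descend verbatim to $W/W_J$: the minimal-length coset representative of a twisted element need not be the twist of the minimal-length representative. I would therefore argue at the level of vertices. A vertex of a rank-$\fd$ Bruhat polytope, twisted or not, is an $\fd$-flag vector $\sum_{i=1}^s\mathbf e_{\{z(1),\dots,z(d_i)\}}$ for $z$ ranging over the relevant interval, and a polytope is determined by its vertex set; so it suffices to show that the set of $\fd$-flag vectors arising from a twisted interval coincides with the set arising from a suitable ordinary interval $[u',w']$ — for instance with $u',w'$ the Bruhat minimum and maximum of the coset representatives in play. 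That equality of finite sets of lattice points reduces, constituent by constituent, to the complete-rank statement applied to the projection to each $\Gr(d_i,n)$; the delicate remaining point, which I expect to be the technical heart of the proof, is to arrange that these $s$ separate comparisons all come from one interval $[u',w']$, which should follow from the subword and lifting properties of the Bruhat order combined with the matroid-quotient (nestedness) relations that the constituents of a flag matroid must satisfy.
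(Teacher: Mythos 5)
Your proposal takes a genuinely different route from the paper, and it has an unresolved gap at exactly the point you flag as "the technical heart."

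The paper does not project from the complete-rank case at all. Instead it exploits matroid duality: a twisted Bruhat polytope $\tBP^\fd_{u,v}$ is the polytope of a $\tnn$ flag matroid $\fM$ (by \Cref{prop:richard}); the dual $\fM^*$ is again $\tnn$, hence $P_{\fM^*}$ is a twisted Bruhat polytope $\tBP^{\fd^*}_{a,b}$; and the commutative diagram (\ref{eq:comm}), built from the observation $\phi^{-1}\circ\psi=\iota$ in (\ref{eq:iota}), shows that applying polytope duality and permutation inversion simultaneously sends $\pi^{\fd^*}\circ\psi$-images to $\pi^\fd\circ\phi$-images. Crucially, since $\_^*$ on vertex sets is a bijection, minimality of $b$ in its $\pi^{\fd^*}\circ\psi$-fiber transfers to minimality of $b^{-1}$ in its $\pi^\fd\circ\phi$-fiber, so $[a^{-1},b^{-1}]\in\BI(\fd,n)$ and $P_\fM = \BP^\fd_{a^{-1},b^{-1}}$. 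This completely sidesteps the incompatibility between the twist and the parabolic quotient: it never needs $\iota$ to preserve minimal coset representatives, because the dualization supplies the rank-$\fd^*$ twisted interval whose inverse is automatically in the right form.

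Your argument stalls precisely where you predict. You correctly identify that the twist does not descend to $W/W_J$ — this is the same obstruction the paper points out in the paragraph before \Cref{thm:twisted}. But your proposed fix, a constituent-by-constituent comparison of vertex sets reduced to the $\Gr(d_i,n)$ projections, does not yet produce a single interval $[u',w']$: the separate Grassmannian comparisons give $s$ intervals that need not cohere into one Bruhat interval in $\Sym_n$, and the appeal to "subword and lifting properties combined with matroid-quotient nestedness" is a hope rather than an argument. Even granting your (plausible, but itself nontrivial) Step~1 claim that both families are exactly the $\rho_\fd$-images of the complete-rank families, the transport of the complete-rank equality through $\rho_\fd$ only tells you that the two \emph{images} coincide; it does not by itself give you the interval in the right coset form that \Cref{def:bru,def:tbru} demand. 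The paper's duality trick delivers that interval explicitly.
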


The tropical flag variety $\TFl(\fd,n)$ was introduced in \cite{BEZ} as the tropicalization of the flag variety via its Pl\"ucker embedding.
The corresponding prevariety $\Fl\Dr(\fd,n)$ is called flag Dressian and it is defined by 
\begin{equation}
\bigoplus_{i\in T\backslash S} \fvm(Si) \odot \fvm(T\backslash i) \text{ achieves the minimum at least twice.}
\label{eq:troplucker}
\end{equation}
for all pairs of sets $(S,T)$ as in (\ref{eq:incidence}). 
For $T\in {[n] \choose d_j+1}$ these are the tropical Pl\"ucker relations introduced in \cite{SS} for the tropical Grassmannian.
For $T\in {[n] \choose d_{j+1}+1}$ these are the tropical incidence relations which were first shown by Haque \cite{Haque} to characterize containment of tropical linear spaces.
Much like Dressians characterize regular subdivisions of matroid polytopes into matroid polytopes \cite{Speyer:2008}, the flag Dressian characterizes coherent mixed subdivisions of flag matroid polytopes into flag matroid polytopes \cite{BEZ}.

The study of positivity of the tropical Grassmannian  was initiated by Speyer and Williams \cite{SpeyerWilliams:2005} where it was parametrized using a tropicalized version of the paramterizations used by Postnikov for positroid cells \cite{postnikov}. 
It was simultaneously proven in \cite{ArkaniHamedLamSpradlin:2021} and \cite{SpeyerWilliams:2021} that the positive tropical Grassmannian is determined by the three-term positive relations and that it contained all valuated matroids inducing positroid subdivisions. 
Positroid subdivisions are of particular interest as they induce subvidisions of the amplithuedron introudced by Arkani-Hamed and Trnka \cite{arkani2014amplituhedron} for the study of scattering amplitudes in the $\mathcal{N}=4$ SYM model \cite{arkani2017positive,ArkaniHamedLamSpradlin:2021,lukowski2020positive}. 

The positivity notions of the flag variety can be taken to the tropical context, by looking at the image under the valuation map of the flag variety defined over the Puiseux series. 
In this paper, we consider the following notions of non-negativity for valuated flag matroids:

\begin{enumerate}[(a)]
	\item The tropicalization of the $\tnn$ flag variety, $\TFl^\tnn(\fd,n)$. These are the valuated flag matroids that can be realized by a flag of linear spaces coming from a matrix where all the minors are positive.
	We call such valuated flag matroids \bemph{totally non-negative}. \label{enum:tnn}
	\item Valuated flag matroids inducing subdivisions consisting exclusively of Bruhat polytopes. We call such subdivisions \bemph{Bruhat subdivisions}. \label{enum:bru}
	\item The tropicalization of the Pl\"ucker non-negative flag variety, $\TFl^{\ge 0}(\fd,n)$. These are the flag matroids that can be realized by a flag of linear spaces whose $d_i\times d_i$ minors using the first $d_i$ rows are positive. Such flag matroids were \bemph{called flag positroids} in \cite{BEW}. \label{enum:pl+}
	\item Valuated flag matroids inducing subdivisions consisting exclusively of flag positroid polytopes. We call such subdivisions \bemph{flag postroid subdivisions}. \label{enum:positroid}
	\item The \bemph{non-negative flag Dressian} $\FlDr^{\ge0}(\fd,n)$, consisting of all flag valuated matroids $\fvm$ satisfying
\begin{equation}
\bigoplus_{\substack{t_i \in T\backslash S \\ i \text{ odd}}} \fvm(Si) \odot \fvm(T\backslash i)   = \bigoplus_{\substack{t_i \in T\backslash S \\ i \text{ even}}} \fvm(Si) \odot \fvm(T\backslash i) 
\label{eq:oriented}
\end{equation}	
	for all $(S,T)$ as in (\ref{eq:incidence}). This definition can be rephrased as flag matroids over the hyperfield of signed tropical numbers \cite{JO} which are positively signed.  \label{enum:dress}
\end{enumerate}

All of these notions of positivity were shown to be the same for the Grassmannian \cite{SpeyerWilliams:2021,ArkaniHamedLamSpradlin:2021}.
Then it was proved in \cite{JLLO} that these notions agree for the flag variety with full support (i.e. when the flag matroid polytope is the permutahedron).
Both of this cases were then generalized for flags of consecutive rank in \cite{BEW} without necessarily full support. 
The main goal of this paper is to look at what happens to these notions whenever there are gaps $\fd$.

It is immediate that \ref{enum:tnn} is stronger than \ref{enum:pl+}, \ref{enum:bru} is stronger than \ref{enum:positroid} and \ref{enum:pl+} is stronger than both \ref{enum:positroid} and \ref{enum:dress}. 
It follows from \cite{KodamaWilliams:2015} (see \Cref{prop:richard}) that \ref{enum:tnn} is stronger than \ref{enum:bru}.
An example is given in \cite[Example 4.6]{BEW} showing that \ref{enum:pl+} does not imply \ref{enum:tnn} or \ref{enum:bru} for non-consecutive flags.
We provide the first example showing that \ref{enum:tnn} is stronger than \ref{enum:bru} for non-consecutive flags (see \Cref{ex:main}).
Moreover, this example shows that total positivity can can not be read from the matroid subdivision.
It is asked in \cite[Question 1.6]{BEW} whether \ref{enum:pl+} and \ref{enum:dress} are equivalent for unvaluated matroids. 
This can be thought of as a flag generalization of the fact that positivity implies realizability for oriented matroids \cite{ardila2017positively}.

To study non consecutive flags, it makes sense to focus on the extremal case of rank $(1,n-1)$.
For Dressians, all the geometric information is concentrated in the octahedron because it is the matroid polytope of $U_{2,4}$, the smallest matroid where where a non-trivial tropical Pl\"ucker relation appears.
Similarly hexagons are the polytope of $(U_{1,3}, U_{2,3})$, the smallest flag matroid containing a non-trivial tropical incidence relation, so most of the geomteric information is contained in them (although the subdivision is not determined by them, see \cite[Figure 3]{JLLO}).
So the idea is that copies of the polytope of $(U_{1,n},U_{n-1,n})$, the smallest flag matroid where the equations above have $n$ terms, contain enough information to determine all these positivity concepts, just like in the consecutive rank case.
We use the adjective \bemph{holllow} for matroids of rank $\fd=(1,n-1)$.

In the hollow case, there is only one incidence relation (and no non-trivial Pl\"ucker relations).
To simplify notations, we write
\[
\lambda_i := \fvm(i)\odot\fvm([n]\backslash i)
\]
for the terms that appear in the incidence relation.
Formally, we have a vector $\lambda\in \TP$ in tropical projective space.
It is invariant under translation of tropical linear spaces (scaling of coordinates) as it is homogenous. 
Geometrically, we can think of the lambda values as the relative heights of the labeled points in the mixed subdivision induced by $\fvm$ that land in the center of $\Delta(1,n-1;n)$.
For what lies below, we also let $\lambda_0 = \lambda_{n+1} = \infty$.

Our next main result, is completely characterizing all positivity notions for hollow rank in terms of the quadratic terms of the Pl\"ucker relations (see \Cref{fig:FlDr}).
In particular we answer \cite[Question 1.6]{BEW} positively for this case.

\begin{theorem}[\Cref{thm:bruh,thm:tnnh,thm:posh}]
\label{thm:hollow}
Let $\fvm\in \FlDr(1,n-1;n)$ be a flag valuated matroid of hollow rank.
\begin{enumerate}[1.]
	\item $\fvm$ is totally non-negative if and only if	
	 \begin{equation}
\lambda_i \ge \lambda_{i-1} \oplus \lambda_{i+1}
	 \label{eq:tnnh}
	 \end{equation}	
	for every $i\in [n]$.
	\item $\fvm$ induces a Bruhat subdivision if and only if	
	 \begin{equation}
	\lambda_i = \lambda_{i-1} \oplus \lambda_{i+1}
	 \label{eq:bruh}
	 \end{equation}
	for every $i$ such that $\lambda_i = \lambda_1\oplus\dots\oplus\lambda_j$.
	\item The following are equivalent:
	\begin{enumerate}[i.]
		\item $\fvm$ is a valuated flag positroid
		\item $\fvm$ induces a flag positroid subdivison 
		\item $\fvm\in \FlDr^{\ge 0}(1,n-1;n)$, i.e. 
			\begin{equation}
	\bigoplus_{i \text{ odd}} \lambda_i   = \bigoplus_{i \text{ even}} \lambda_i 
	 \label{eq:posh}
	 \end{equation}
	\end{enumerate}
\end{enumerate}
\end{theorem}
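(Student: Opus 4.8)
\emph{Proof plan.} The idea is to reduce each of the three statements to a concrete realisation or subdivision question, using throughout the following dictionary. A hollow valuated flag matroid $\fvm$ amounts to a rank-$1$ valuated matroid $N_1$ with $N_1(i)=\fvm(i)$ together with a corank-$1$ valuated matroid $N_2$ with $N_2([n]\setminus i)=\fvm([n]\setminus i)$, the only constraint being the single tropical incidence relation, i.e.\ that $\lambda_1\oplus\cdots\oplus\lambda_n$ is attained at least twice. A realisation over the Puiseux series $\KK$ is a flag $(L_1,L_{n-1})$ with $L_1=\langle v\rangle$ a line and $L_{n-1}=\ker w$ a hyperplane containing $v$; then $\fvm(i)=\val v_i$, $\fvm([n]\setminus i)=\val p_{[n]\setminus i}(L_{n-1})$ and $\lambda_i=\val(v_i\,p_{[n]\setminus i}(L_{n-1}))$, while the incidence $v\in L_{n-1}$ is precisely the Pl\"ucker relation \eqref{eq:incidence}; when $v$ and all $p_{[n]\setminus i}(L_{n-1})$ are positive, that relation reads as a balance $\bigoplus_{i\text{ even}}\lambda_i=\bigoplus_{i\text{ odd}}\lambda_i$ between the valuations of two sums of positive Puiseux series.

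\emph{Part 3.} The implications (i)$\Rightarrow$(ii) and (i)$\Rightarrow$(iii) are immediate (and recorded as such in the introduction); indeed (i)$\Rightarrow$(iii) is exactly the last observation above. For (iii)$\Rightarrow$(i) I would give an explicit construction. Set $v_i=t^{\fvm(i)}$; it suffices to produce positive Puiseux series $p_1,\dots,p_n$ with $\val p_i=\lambda_i$ and $\sum_{i\text{ odd}}p_i=\sum_{i\text{ even}}p_i$, for then $p_i/v_i$ has valuation $\fvm([n]\setminus i)$, the hyperplane $L_{n-1}=\ker w$ with $w_i=\pm p_i/v_i$ of the appropriate alternating sign contains $v$, and $(\langle v\rangle,L_{n-1})$ is a flag positroid realising $\fvm$ (as a defining matrix take any one with first row $v$ and row space $L_{n-1}$, negating a later row if needed so that its maximal minors, automatically of a single sign, are all positive; loops and coloops, where $\lambda_i=\infty$, require the usual small adjustments). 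To build the $p_i$, let $m=\lambda_1\oplus\cdots\oplus\lambda_n$, which by \eqref{eq:posh} is attained at some even index $b$ and at some odd index; put $p_i=c_i t^{\lambda_i}$ for $i\ne b$ with positive $c_i$ chosen so that $\sum_{i\text{ odd},\,\lambda_i=m}c_i>\sum_{i\text{ even},\,i\ne b,\,\lambda_i=m}c_i$, and set $p_b:=\sum_{i\text{ odd}}p_i-\sum_{i\text{ even},\,i\ne b}p_i$; this has positive leading coefficient and valuation $m=\lambda_b$. This yields (i)$\Leftrightarrow$(iii) and (i)$\Rightarrow$(ii); for the remaining (ii)$\Rightarrow$(iii) I would argue contrapositively, showing that if $\lambda_1\oplus\cdots\oplus\lambda_n$ is attained at a single parity only, then the induced subdivision has a cell supporting a flag matroid whose incidence relation cannot be balanced with positive entries (the loops of its rank-$1$ constituent are concentrated at one parity of the relevant index set), so that cell is not a flag positroid polytope, contradicting (ii).

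\emph{Part 1.} Here I would use that $\fvm$ is totally non-negative exactly when it extends to a Pl\"ucker non-negative \emph{complete} flag over $\KK$ \cite{boretsky,bloch2022two}. The interior inequalities \eqref{eq:tnnh} for $2\le i\le n-1$ I expect to obtain by induction on $n$, passing to a suitable deletion/restriction of the complete flag for which the relevant $\lambda$-data is inherited. The two genuinely boundary inequalities $\lambda_1\ge\lambda_2$ and $\lambda_n\ge\lambda_{n-1}$ are where the intermediate constituents $L_2,\dots,L_{n-2}$ of the complete flag must enter: I would propagate the three-term positive incidence relations between consecutive constituents $L_k\subset L_{k+1}$ (each of which, for a totally non-negative flag, reads ``one monomial $=$ the sum of the other two, all positive'') along the flag so as to compare $v_1\,p_{[n]\setminus 1}(L_{n-1})$ with $v_2\,p_{[n]\setminus 2}(L_{n-1})$. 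Conversely, given $\lambda$ satisfying \eqref{eq:tnnh} I would start from the positive hollow realisation of Part 3 (legitimate, since \eqref{eq:tnnh} implies \eqref{eq:posh}) and use the slack of \eqref{eq:tnnh} over \eqref{eq:posh} to interpolate the missing constituents with positive Pl\"ucker coordinates, via a tropical version of the Marsh--Rietsch charts \cite{MR}; alternatively one parametrises the cells of $\TFl^\tnn(1,n-1;n)$ directly and reads off that they are cut out by \eqref{eq:tnnh}.

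\emph{Part 2.} Here I would invoke \Cref{prop:richard}: the cells of a subdivision induced by a totally non-negative valuated flag matroid are Bruhat interval polytopes, so a Bruhat subdivision is one all of whose cells are Bruhat interval polytopes, and by \Cref{thm:introbru} it is harmless to work with either version of the definition. The work is then to classify which hollow flag matroid polytopes $P_{M_1}+P_{M_2}$ are Bruhat interval polytopes, in terms of the loops of $M_1$ and coloops of $M_2$, and to track this cell by cell near the centre of $\Delta(1,n-1;n)$: the minimum locus of $\lambda$ is precisely where the local shape of the subdivision is pinned down, and the Bruhat requirement there is exactly \eqref{eq:bruh}. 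The main obstacle, throughout, is the passage from a condition on the quadratic data $\lambda$ alone to a genuine realisation or subdivision; since the construction in Part 3 is routine once organised, the real difficulties I anticipate are (a) the two boundary inequalities of Part 1, which force one to squeeze information about $L_1$ and $L_{n-1}$ out of the positivity of the \emph{intermediate} constituents (and the reverse interpolation for sufficiency), and (b) the classification of hollow Bruhat interval polytopes together with the cell-by-cell bookkeeping in Part 2.
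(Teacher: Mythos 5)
Your overall architecture mirrors the paper's: reduce each part to a realisation or subdivision question phrased through the $\lambda$-dictionary, dispose of the easy implications, and then do the hard work of classifying the hollow polytopes and realising the hollow flags. Part 3 is the one part that is genuinely complete: the line--hyperplane construction (take $v_i=t^{\fvm(i)}$, balance $\sum_{\text{odd}}p_i=\sum_{\text{even}}p_i$ at the chosen even index $b$, set $w_i=\pm p_i/v_i$ with alternating signs so that $v\in\ker w$ and all Pl\"ucker coordinates of $\ker w$ are positive) is a cleaner repackaging of what the paper does with an explicit $(n-1)\times n$ matrix in the proof of \Cref{thm:posh}, and your contrapositive sketch for (ii)$\Rightarrow$(iii) — the $*$-positions of each cell sit at a single parity — is the paper's argument as well.

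Parts 1 and 2, however, are genuine gaps, and you flag them yourself. For Part 1, the proposed induction ``by passing to a suitable deletion/restriction'' does not go through as stated: a flag minor applies the \emph{same} operation to every constituent, so deleting $n$ makes the corank-$1$ constituent free and contracting $n$ makes the rank-$1$ constituent trivial, while the mixed operation $(\mu\backslash n,\nu/n)$ — which would inherit $\lambda_1,\dots,\lambda_{n-1}$ — is not a flag minor and need not be a valuated flag matroid, let alone a $\tnn$ one. You also give no concrete mechanism for the two boundary inequalities $\lambda_1\ge\lambda_2$, $\lambda_n\ge\lambda_{n-1}$, which are precisely what distinguishes $\tnn$ from Pl\"ucker non-negativity here. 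The paper avoids the induction altogether: it introduces valuated flag gammoids (\Cref{def:gammoid}), parametrises $\TFl^\tnn(1,n-1;n)$ cell-by-cell via the graphs $\Gamma_\fM$ (\Cref{prop:param}, adapting Boretsky's graphical form of the Marsh--Rietsch charts), translates $\lambda$ into edge weights, and closes with the elementary tropical identity \Cref{lemma:posequ} ($\lambda_i=x_i\oplus x_{i-1}$ solvable iff $\lambda$ is tropically concave). This is the ``alternative'' route you mention but do not carry out. For Part 2, reducing to the classification of hollow Bruhat interval polytopes (via \Cref{prop:richard} and the freedom afforded by \Cref{thm:introbru}) is the right setup, but the classification itself — the ``no isolated $*$'' criterion of \Cref{thm:bruhat}, proved by a careful subword analysis of the reduced word $\tau_1\cdots\tau_{k-2}\tau_{n-1}\cdots\tau_m$ — is the substantive content, and nothing in your sketch gets you there.
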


In the process of proving \Cref{thm:hollow} we introduce valuated flag gammoids (see \Cref{thm:gammoid} and \Cref{def:gammoid}).
They generalize valuated gammoids introduced in \cite{FO}, which are in turn a valuated generalization of the matroid class introduced by Mason in \cite{MasonGammoids}.
We use them to simplify the Marsh-Rietsch parametrization of Richardson cells, which was first given a graphic interpretation by Boretsky \cite{boretsky}. 

As our results are all related to the quadratic monomials that appear in (\ref{eq:troplucker}), we introduce the following notation:
\begin{definition}
\label{def:lambda}
Given a valuated flag matroid $\fvm$ of rank $\fd = (d_1,\dots,d_s)$, a \bemph{Pl\"ucker pair} $(S,T)$ consists of a pair of subsets $S\subseteq T$ such that $S\in {[n] \choose d_i-1}$ and either $T\in {[n] \choose d_{i+1} + 1}$ for $i \in [s-1]$, or  $T\in {[n] \choose d_i + 1}$ for $i \in [s]$.
The \bemph{lambda-values} for the Pl\"ucker pair $(S,T)$ are
\[
\lambda^{S,T}_j : = \fvm(St_j)\odot \fvm(T\backslash t_j).
\]
where $t_1<\dots< t_k$ are the elements of $T\backslash S$.
Formally, $\lambda \in \TT\PP^{|T\backslash S|-1}$ is a vector in tropical projective space. 
\end{definition}

Again we let $\lambda_0 = \lambda_{|T\backslash S|+1}=\infty$.
With this notation we have that the Pl\"ucker-incidence relations (\ref{eq:troplucker}) can be written simply as
\begin{equation}
\bigoplus_i \lambda_i^{S,T}\text{ achieves the minimum at least twice.}
\label{eq:lambda_inci}
\end{equation}

With this language we can formulate our main result for arbitrary rank which is proved using \Cref{thm:hollow} and the fact that one can associate to each Pl\"ucker pair a $(S,T)$ a certain hollow flag matroid encoding the positivity conditions for that pair (see \Cref{def:hp}).

\begin{theorem}
\label{thm:arb}
Let $\fvm\in \FlDr(\fd,n)$ be any valuated flag matroid.
\begin{enumerate}[1.]
	\item If $\fvm$ is totally non-negative then
	 \begin{equation}
\lambda_i^{S,T} \ge \lambda^{S,T}_{i-1} \oplus \lambda^{S,T}_{i+1}
	 \label{eq:tnna}
	 \end{equation}	
 for every Pl\"ucker pair $(S,T)$ and $i\le|T\backslash S|$.
	\item If $\fvm$ induces a Bruhat subdivision then 
	 \begin{equation}
	\lambda_i^{S,T} = \lambda^{S,T}_{i-1} \oplus \lambda^{S,T}_{i+1}
	 \label{eq:brua}
	 \end{equation}
	for every every Pl\"ucker pair $(S,T)$ and $i$ such that $\lambda_i = \lambda_1\oplus\dots\oplus\lambda_j$.
		\item If $\fvm$ induces a flag positroid subdivision then $\fvm\in \FlDr^{\ge0}(\fd,n)$, i.e. 
	 \begin{equation}
	\bigoplus_{i \text{ odd}} \lambda^{S,T}_i   = \bigoplus_{i \text{ even}} \lambda^{S,T}_i 
	 \label{eq:posa}
	 \end{equation}	
\end{enumerate}
\end{theorem}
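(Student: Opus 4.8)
The plan is to reduce each of the three statements in \Cref{thm:arb} to the corresponding statement in \Cref{thm:hollow} by exploiting the \emph{hollow restriction} attached to a Pl\"ucker pair. Concretely, fix a valuated flag matroid $\fvm\in\FlDr(\fd,n)$ and a Pl\"ucker pair $(S,T)$ with $T\backslash S=\{t_1<\dots<t_k\}$. First I would spell out \Cref{def:hp}: from $(S,T)$ one extracts a valuated flag matroid $\fvm^{S,T}$ of hollow rank $(1,k-1)$ on ground set $\{t_1,\dots,t_k\}$, roughly by contracting $S$ and restricting to $T$ on the two relevant constituents $M_i$ and $M_{i+1}$ (or $M_i$ and itself, in the Pl\"ucker-relation case), so that its $\lambda$-vector is exactly $\lambda^{S,T}$. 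The key lemma — which I expect to be stated around \Cref{def:hp} — is that this hollow restriction lies in $\FlDr(1,k-1;k)$ and that it inherits each of the positivity notions: if $\fvm$ is totally non-negative (resp.\ induces a Bruhat subdivision, resp.\ induces a flag positroid subdivision), then so does $\fvm^{S,T}$. Granting this, part~1 follows by applying inequality~\eqref{eq:tnnh} of \Cref{thm:hollow} to $\fvm^{S,T}$, part~2 by applying~\eqref{eq:bruh}, and part~3 by applying~\eqref{eq:posh}; in each case one just reads off that the $\lambda$-values of the restriction are the $\lambda^{S,T}_i$ of the original.

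The substantive work, then, is establishing that the hollow restriction preserves positivity. For total non-negativity this should be a functoriality statement: realizing $\fvm$ by a matrix over the Puiseux series all of whose minors are positive, the contraction–restriction operations correspond to taking a submatrix / quotient in a way that keeps minors positive (contraction by $S$ amounts to working in the quotient by the corresponding coordinate subspace, restriction to $T$ to a coordinate projection), so the restricted flag is again realized tnn; tropicalizing gives the claim. For flag positroid subdivisions one argues at the level of polytopes: a flag positroid subdivision of $P_\fvm$ induces, on each face of $P_\fvm$ corresponding to fixing coordinates outside $S\cup T$ appropriately, a subdivision whose cells are again flag positroid polytopes, and this face carries precisely the hollow flag matroid polytope of $\fvm^{S,T}$; one must check the induced subdivision is exactly the one induced by the valuated restriction, which is the standard compatibility of regular subdivisions with restriction to faces. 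The Bruhat case is the analogue with "flag positroid polytope" replaced by "Bruhat polytope"; here \Cref{thm:introbru} (twisted $=$ untwisted Bruhat polytopes) and \Cref{prop:richard} may be needed to see that a face of a Bruhat polytope cut out this way is again Bruhat.

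I expect the main obstacle to be precisely this last step: verifying that the combinatorial operation producing $\fvm^{S,T}$ is compatible simultaneously with (i) the flag Dressian relations, (ii) regular mixed subdivisions and their faces, and (iii) each positivity notion, and in particular that the face of $P_\fvm$ one restricts to really is a \emph{hollow} flag matroid polytope with $\lambda$-vector $\lambda^{S,T}$ — the bookkeeping of which constituents survive contraction and restriction, and of the sign/parity conventions in~\eqref{eq:oriented}, is where errors would creep in. A secondary subtlety is the Pl\"ucker-relation case $T\in\binom{[n]}{d_i+1}$ versus the incidence case $T\in\binom{[n]}{d_{i+1}+1}$: in the former the two constituents coincide and the hollow flag matroid degenerates (its two constituents are a rank-$1$ and a corank-$1$ truncation/elongation of the same matroid), and one should check \Cref{thm:hollow} still applies, or else handle it directly via the ordinary (non-flag) positive Dressian results already available for the Grassmannian. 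Once the restriction lemma is in place, the three implications are immediate corollaries of \Cref{thm:hollow}, so the write-up should be short modulo that lemma.
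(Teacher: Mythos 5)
Your proposal matches the paper's own proof strategy essentially exactly: the paper defines precisely the hollow projection you describe ($\eta_{S,T}(\fvm)$ in \Cref{def:hp}), records via \Cref{lemma:eta} that its $\lambda$-vector equals $\lambda^{S,T}$, proves that each of the three positivity notions passes to $\eta_{S,T}(\fvm)$, and then invokes \Cref{thm:hollow}; it also handles the Pl\"ucker-relation pairs up front by noting that every positivity notion forces the constituents to be valuated positroids, exactly as you suggest. The one place where your sketch underestimates the work is the subdivision cases: the paper cannot simply restrict to a face, since the hollow flag matroid lives on a smaller ground set at a different rank; instead it lifts each cell $\eta_{S,T}(\fvm)^x$ to a cell $\fvm^{\tilde x}$ of the original subdivision via a carefully chosen $\tilde x$, takes the minor $\fvm^{\tilde x}/S|_T$, and then has to fill this partial flag out to a flag of consecutive rank satisfying the three-term positive incidence relations before it can quote the consecutive-rank results of [JLLO]/[JO] to conclude that the hollow piece is a Bruhat (resp.\ flag positroid) polytope — but this is a refinement of your outline, not a different approach.
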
 

From this theorem it follows immediately that \ref{enum:bru} implies \ref{enum:dress}, but we still do no know whether \ref{enum:bru} implies \ref{enum:pl+}.
However, we feel bold to conjecture that the situation in hollow rank extends to arbitrary rank and the converses of \Cref{thm:arb} also hold:

\begin{conjecture}
\label{conj:main}
Let $\fvm\in \FlDr(\fd,n)$ be any valuated flag matroid.
\begin{enumerate}[1.]
	\item $\fvm$ is totally non-negative if and only if	(\ref{eq:tnna}) holds for every Pl\"ucker pair.
	\item $\fvm$ induces a Bruhat subdivision if and only if (\ref{eq:brua}) holds for every Pl\"ucker pair.
	\item The following are equivalent:
	\begin{enumerate}[i.]
		\item $\fvm$ is a valuated flag positroid
		\item $\fvm$ induces a flag positroid subdivison 
		\item $\fvm\in \FlDr^{\ge 0}(\fd;n)$.
	\end{enumerate}
\end{enumerate}
\end{conjecture}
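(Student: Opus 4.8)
Since the necessary directions are exactly \Cref{thm:arb}, the content of \Cref{conj:main} is the three converse implications, and my plan is to reduce all of them to the hollow case treated in \Cref{thm:hollow} and then to upgrade the resulting ``local'' information to a global realization. The organizing principle is that the conditions \eqref{eq:tnna}, \eqref{eq:brua}, \eqref{eq:posa} are, via the assignment of \Cref{def:hp}, precisely the assertion that \emph{every} hollow flag matroid extracted from $\fvm$ at a Pl\"ucker pair is, respectively, totally non-negative, induces a Bruhat subdivision, or lies in $\FlDr^{\ge0}$. So what is really being conjectured is a local-to-global principle for positivity of valuated flag matroids.

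First I would establish \emph{minor stability}. For a Pl\"ucker pair $(S,T)$, contracting $S$ and restricting to $T$ sends a valuated flag matroid of rank $\fd$ to a hollow valuated flag matroid on ground set $T\setminus S$, carries $\lambda^{S,T}$ to its single lambda-vector, and preserves each of the notions \ref{enum:tnn}, \ref{enum:bru}, \ref{enum:pl+} together with their Dressian-level analogues. Combined with \Cref{thm:hollow} this both re-derives \Cref{thm:arb} and shows that the hypotheses of \Cref{conj:main} are equivalent to the statement that every Pl\"ucker-pair minor of $\fvm$ is positive in the appropriate sense. For part 3 one moreover has the implications (i)~$\Rightarrow$~(ii)~$\Rightarrow$~(iii) for free (the first because \ref{enum:pl+} implies \ref{enum:positroid}, the second by part 3 of \Cref{thm:arb}), so only the realizability statement (iii)~$\Rightarrow$~(i) remains there.

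The heart of the proof is then a \emph{gap-filling induction} on the number and sizes of the gaps in $\fd$, with base case the consecutive-rank theorem of \cite{BEW} (and the full-support case of \cite{JLLO}). Given $\fvm$ satisfying all Pl\"ucker-pair conditions and a gap between $d_j$ and $d_{j+1}$, I would construct an intermediate constituent of rank $d_j+1$ so that the refined flag is again a valuated flag matroid satisfying all Pl\"ucker-pair conditions and is positive (in the relevant sense) exactly when $\fvm$ is; iterating closes all gaps. The hollow conditions at the pairs $(S,T)$ with $S\in\binom{[n]}{d_j-1}$ and $T\in\binom{[n]}{d_{j+1}+1}$ should be exactly what forces the required compatibility of such an interpolant. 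For part 3 one can try to read the interpolating constituent off the signs prescribed by \eqref{eq:posa}, using the description of $\FlDr^{\ge0}$ as positively signed flag matroids over the signed tropical hyperfield \cite{JO}; for part 1 an alternative is to bypass interpolation altogether and instead fit the valuated flag gammoid / Marsh--Rietsch parametrization of \Cref{thm:gammoid} and \Cref{def:gammoid} to $\fvm$ by reading its wiring-diagram parameters from the $\lambda^{S,T}$.

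The main obstacle is precisely this local-to-global step: knowing that all hollow Pl\"ucker-pair minors are positive is a large but a priori incomplete system of constraints, and showing that it forces a genuine global realization --- rather than mere Dressian-level compatibility --- is exactly why the statement remains only a conjecture. Even the unvaluated half of part 3, that $\FlDr^{\ge0}$ forces a flag positroid, is the flag-matroid analogue of ``positivity implies realizability'' for oriented matroids and is already delicate; the valuated and higher-rank cases compound this with the combinatorics of coherent mixed subdivisions of flag matroid polytopes.
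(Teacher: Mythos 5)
This statement is a conjecture, not a theorem, and the paper does not prove it --- so there is no proof in the paper to compare yours against. What the paper does provide is the necessary direction (\Cref{thm:arb}), the hollow case (\Cref{thm:hollow}), the reduction of part 2 and of the equivalence $\text{\ref{enum:dress}}\Leftrightarrow\text{\ref{enum:positroid}}$ to unvaluated matroids (\Cref{prop:unvaluated}), and the observation that part 1 implies part 2 (\Cref{coro:conjimp}). Your ``minor stability'' step is precisely the mechanism of the paper's proof of \Cref{thm:arb}: the hollow projection $\eta_{S,T}$ of \Cref{def:hp} is the contraction-restriction minor you describe, \Cref{lemma:eta} identifies its lambda-values, and the fact that minors preserve \ref{enum:tnn} and \ref{enum:pl+} is the cited stability under minors from \cite{ardila2016positroids}. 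So the part of your sketch that re-derives the known necessary direction is correct and is essentially the paper's own argument.

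The converse implications are exactly what remains open, and you correctly flag the local-to-global step as the obstacle. For the gap-filling induction specifically, the paper's final remark of \Cref{sec:arb} is directly relevant and is a concrete warning: even if you could interpolate each gap of $\fd$ separately, two adjacent interpolants produced from $(\mu_1,\mu_2)$ and $(\mu_2,\mu_3)$ need not be realized by flags sharing the same middle linear space $L_2$. Your plan to insert a constituent of rank $d_j+1$ faces the same issue, compounded: the new constituent must simultaneously be compatible with \emph{all} existing constituents and must itself be a valuated positroid, and there is no known way to extract a single such interpolant from the hollow Pl\"ucker-pair data. In the hollow case the paper gets away with this because there the gammoid/Marsh--Rietsch parametrization (\Cref{prop:param}) and the explicit $x_i$ construction of \Cref{lemma:posequ} give one a global realization directly from the $\lambda$-vector; no analogous global parametrization is available once there is more than one Pl\"ucker pair to coordinate. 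Your proposal is an honest description of a research program rather than a proof, and you are right that the compatibility-of-interpolants problem is exactly where it stalls; you should also note that for part 2 it would suffice to settle the unvaluated case (\Cref{prop:unvaluated}), and that proving part 1 would already give part 2 for free (\Cref{coro:conjimp}), which narrows where effort is best spent.
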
 

We want to remark that the first part of \Cref{conj:main} implies the second (see \Cref{coro:conjimp}). 
This follows from the fact that it is enough for the second part to prove it for unvaluated flag matroids (see \Cref{prop:unvaluated}) and that both parts are equivalent for unavluated matroids. 
Similarly, for the equivalence of $ii.$ and $iii.$ in the third part it is enough to prove it for unvaluated matroids, which is precisely \cite[Question 1.6]{BEW}.
This conjecture would characterize all positive notions for partial flag varieties in terms of the quadratic terms appearing in the Pl\"ucker-incidence relations.
The cases where we know \Cref{conj:main} hold are the Grassmannian case \cite{ArkaniHamedLamSpradlin:2021,SpeyerWilliams:2021}, consecutive flags \cite{BEW,JLLO} (equations (\ref{eq:tnna}), (\ref{eq:brua}) and (\ref{eq:posa}) are all equivalent to the three term Pl\"ucker-incidence relations for these cases) and hollow rank (\Cref{thm:hollow}).

Our results only concern type $\textsc{A}$ partial flag varieties, but positivity and Bruhat polytopes have also been studied for generalized flag varieties $G/P$ \cite{chevalier2011total,KodamaWilliams:2015,TsukermanWilliams}.
A more general approach would involve Coxeter flag matroids and their polytopes as described in \cite{BorovikGelfandWhite:2003}. 
The tropicalization of type $\textsc{D}$ Grassmannians has nice interpretation in terms of  $\Delta$-matroids \cite{rincon2012isotropical}, so we believe similar results can be obtained for this case.
This is in stark contrast with the type $\textsc{C}$ Grassmannian, where the polyhedral geometry is poorly understood \cite{balla2023tropical}.

The paper is organized as follows.
Section \ref{sec:background} introduces the necessary background on valuated flag matroids.
Bruhat polytopes are studied in \Cref{sec:bruhat}, which is where \Cref{thm:introbru} is proven.
The matroid subdivisions of $\Delta(1,n-1;n)$ are explained in \Cref{sec:geo}, where we give \Cref{ex:main} showing that notions \ref{enum:tnn} and \ref{enum:bru} differ for non-consecutive rank.
We prove part 2. of \Cref{thm:hollow} in \Cref{sec:hollow_bruh}. 
We introduce valuated flag gammoids in \Cref{sec:gammoid} and use this language to reformulate the parametrization of the tropicalization of Richardson cells in $\TFl^\tnn(1,n-1;n)$ in a way that does not make use of Bruhat intervals. 
Then we finish the proof of \Cref{thm:hollow} in \Cref{sec:hollow+}.
Finally we prove \Cref{thm:arb} in \Cref{sec:arb}, where also give some additional remarks for \Cref{conj:main} in general rank and some final questions.

\subsection*{Acknowledgements}
The author is grateful to Michael Joswig for the support during the initial stages of this project.
The author would also like to thank Lauren Williams for useful discussions about Bruhat polytopes.

\section{Notation}
The following notation will be used throughout:
\begin{itemize}
	\item $[n]=\{1,2,\dots,n\}$
	\item ${[n]\choose d} = \{\text{subsets of } [n] \text{ of size } d\}$
	\item $Si=S\cup\{i\}$ and $T\backslash i = T\backslash \{i\}$.
	\item $\TT = \RR\cup\{\infty\}$ is the tropical semifield where $\oplus =\min$ and $\odot=+$.
	\item $\TP= \RR^n/\RR(1,\dots,1)$ is the tropical projective space (we keep the all $\infty$ vector because the lambda values from \Cref{def:lambda} can be all $\infty$).
	\item Letters in bold correspond to flags of objects, for example $\fvm=(\mu_1,\dots,\mu_s)$ a valuated flag matroid of rank $\fd=(d_1,\dots,d_s)$.
	\item $\fn=(1,\dots,n)$ is the rank of the complete flag.
	\item For a subset $B$ of $[n]$, 
	\[
	e_B:=\sum_{i\in B} e_i
	\]
	is the indicator vector of $B$. 
	\item For a flag of sets $\fB= (B_1,\dots, B_s)$, $e_\fB=e_{B_1}+\dots+e_{B_s}$.
	\item $\Delta(d,n)= [0,1]^n\cap\left\{\sum\limits_{i=1}^n x_i = d\right\}$ is the hypersimplex. 
	\item $\Delta(\fd,n) = \sum\limits_{i=1}^s \Delta(d_i,n)$ is the polytope of the uniform flag matroid of rank $\fd$.
	\item $\Pi_n=\Delta(\fn,n)$ is the permutahedron of dimension $n-1$.
	\item For flag varieties or hypersimplices, we use as arguments either $(\fd,n)$ or equivalently $(d_1,\dots,d_s;n)$.
	\item $\Sym_n$ is the symmetric group.
	\item $\tau_i$ is the transposition $(i,i+1)\in \Sym_n$. 
	\item Other permutations are written in one line notation without parenthesis, for example $\sigma=312$ is the permutation $\sigma \in \Sym_n$ where $\sigma(1)=3$, $\sigma(2)=1$ and $\sigma(3)=2$.
\end{itemize}

\section{Background on valuated flag matroids}
\label{sec:background}

%
%


We assume basic knowledge of tropical geometry, matroid theory and polyhedral subdivisions. For an introduction to these fields and further details we point the reader to \cite{ETC,Tropical+Book} for tropical geometry, \cite{Triangulations} for polyhedral subdivisions and \cite{Oxley:2011,BorovikGelfandWhite:2003} for matroid theory. 

\subsection{Mixed subdivisions}
\label{sec:mixed}
Recall that a function $w:V(P)\to \RR$ on the vertices of a polytope $P\subset \RR^n$ induces a regular subdivision $\subdiv_w(P)$.
The cells in $\subdiv_w(P)$ are the convex hull of all $v\in V$ that minimize $w(v) - x\cdot v$ for some $x\in (\RR^*)^n$, which we write $\subdiv_w(P)^x$ . 
We also get a polyhedral subdivision $\Sigma_w$ of $(\RR^*)^n$, where for every polytope $Q\in \subdiv_w(P)$ there is a cell $Q^\vee\in \Sigma_w$ coonsisting of all $x\in (\RR^*)^n$ such that $\subdiv_w(P)^x=Q$.

Now consider a Minkowski sum of polytopes $P_1+\dots+P_k$. 
Given a tuple of functions $\fw=(w_1,\dots w_k)$ where $w_i:V(P_i)\to \RR$, the (coherent) mixed subdivision $\subdiv_\fw(P_1+\dots+P_k)$ consists of all polytopes of the form 
\[
\subdiv_{w_1}(P_1)^x+\dots+\subdiv_{w_k}(P_k)^x.
\]
The dual complex $\Sigma_\fw$ is the common refinement of all $\Sigma_i$.
Equivalently, mixed subdivisions are the subdivisions induced by the projection of a product of simplices to the Minkowski sum.

\subsection{Flag matroids}
\label{sub:fm}
Matroids have many equivalent definitions.
Of most importance to us, is the polyhedral approach intiated by Gelfand, Goresky, MacPherson and Serganova \cite{GGSM}.
We begin by introducing the polytopes that will play a role. 
First we have the \bemph{hypersimplex} 
\[
\Delta(d,n) = [0,1]^n\cap \{x_1+\dots+ x_n\} = \conv \left\{e_B \mid B \in {[n]\choose d}\right\}
\]
where $e_B$ is the indicator vector of the subset $B$. 
The \bemph{permutahedron} is the convex hull of the orbit of $(1,\dots,n)$ under permuting its coordinates, or, equivalently:
\[
\Pi_n = \Delta(1,n)+\dots+\Delta(n,n).
\]

A \bemph{matroid} $M$ with underlying set $[n]$ is a \bemph{subpolytope} (the convex hull of a subset of vertices) of a hypersimplex $\Delta(d,n)$ such that every edge of $M$ is an edge of the hypersimplex. 
The number $d$ is the \bemph{rank} of a matroid and any $B$ such that $e_B$ is a vertex of $M$ is called a \bemph{basis}, from which you can recover the more traditional notions of matroids.
We write $\BB(M)$ for the set of bases of $M$.

Two matroids $M_1$ and $M_2$ on the same ground set and of ranks $d_1 < d_2$ form a \bemph{quotient} if every flat of $M_1$ is also a flat $M_2$. 
The motivation behind this definition is that if a linear space sits inside another linear space, then the corresponding representable matroids form a quotient.
A \bemph{flag matroid} is $\fM = (M_1,\dots,M_s)$ is a chain of matroids such that any pair of them form a quotient.
The matroids $M_i$ are the \bemph{constituents} of $\fM$.
The \bemph{polytope} $P_\fM$ of a flag matroid $\fM$ is the Minkowski sum $M_1+\dots+M_s$.
For example, if $\fM$ consists of a complete chain of uniform matroids $(U_{1,n},\dots,U_{n,n})$, then $P_\fM$ is the permutahedron $\Pi_n$.
The \bemph{rank} of $\fM$ is the tuple $(d_1,\dots,d_s)$ consisting of the ranks of the constituent matroids.
A polytope is a flag matroid polytope if and only if it is a \bemph{generalized permutahedron} (all edge directions are of the form $e_i-e_j$) and it is the subpolytope of a Minkowski sum of hypersimplices  \cite[Theorem 1.11.1]{BorovikGelfandWhite:2003}.
The \bemph{bases} of a flag matroid $\fM$ are
\[
\BB(\fM) := \bigcup_{i=1}^s \BB(M_i).
\]

A \bemph{flag of subsets} of rank $\fd = (d_1,\dots,d_s)$ is a chain of subsets of $[n]$, $\fB = B_1\subset \dots \subset B_s$ where $|B_i| = d_i$. 
Given a flag of subsets $\fB$, we define $e_\fB := e_{B_1} + \dots+e_{B_s}$. 
The polytope of a flag matroid with uniform constituents $\Delta(\fd, n)$ is the convex hull of $e_\fB$ for all flags of subsets $\fB$.


A \bemph{flag of linear subspaces} of rank $\fd$ over a field $\KK$ is a chain $L_1\subset \dots \subset L_s$ of linear subspaces of a vector space $\KK^n$ such that $\dim(L_i) = d_i$.
A flag matroid $\fM= (M_1,\dots,M_s)$ is \bemph{representable} (or \bemph{realizable}) over afield $\KK$ if there is a flag of subspaces $(L_1, \dots, L_s)\in \Fl(\fd, \KK^n)$ such that the matroid of $M(L_i)$ of $L_i$ is $M_i$.
We write $\fM(\fL)$ for the flag matroid of the flag $\fL$.

When the rank of a flag, whether it is of matroids, sets or linear spaces, is $\fn = (1,\dots,n)$, we call the flag \bemph{complete}.
We call the flag \bemph{consecutive} if the rank is of the form $(d+1,\dots,d+s)$.
On the other extreme, we call the flag \bemph{hollow} if $\fd = (1,n-1)$.

\subsection{Valuated matroids}

A \bemph{valuated matroid} with underlying matroid $M$ is a function $\mu: \BB(M) \to \RR$ such that the regular subdivision it induces on $P_M$ consists exclusively of matroids. 
Equivalently \cite{Speyer:2008}, $\mu$ satisfies the \bemph{Pl\"ucker relations} (\ref{eq:troplucker})
for pairs of subsets $S\in {[n] \choose d-1}$ and $T\in {[n] \choose d+1}$.
An \bemph{initial matroid} of $\mu$ is a matroid appearing in the subdivision, and we write $\mu^x:=\subdiv_\mu(P_M)^x$. 
A valuated matroid can be interpreted as a function from ${[n] \choose d} \to \TT := \RR\cup\{\infty\}$ by setting the value of all non-bases to infinity. 
A matroid can be considered as a valuated matroid with trivial valuation, that is, by setting $0$ to all of its bases and $\infty$ to the non-bases.
\emph{The Dressian} $\Dr(d,n)$ is the space of all valuated matroids of rank $d$ over $[n]$.
It can be given a fan structure as a subfan of the secondary fan of $\Delta(d,n)$, or, equivalently, by the terms in which the three-term Pl\"ucker relations achieve the minimum \cite{OPS}.

A pair of valuated matroids $(\mu_1,\mu_2)$ of ranks $d_1 < d_2$ form a \bemph{quotient} if they satisfy the \bemph{tropical incidence relations} (\ref{eq:troplucker}),
for every pair of subsets $S\in {[n] \choose d_1-1}$ and $T\in {[n] \choose d_2+1}$.
If we restrict to trivially valuated matroids we recover the concept of matroid quotient described above.

A \bemph{valuated flag matroid} $\fvm = (\mu_1,\dots,\mu_s)$ is a chain of valuated matroids such that every pair of them form a quotient.
\emph{The flag Dressian} $\Fl\Dr(\fd;n)$ consists of all valuated flag matroids of rank $\fd$.
A sequence of valuated matroids $\fvm = (\mu_1,\dots,\mu_s)$ is a flag valuated matroid if and only if the supports $(M_1,\dots, M_s)$ form a flag matroid polytope and the mixed subdivision $\subdiv_\fvm(M_1+\dots+M_s)$ consists exclusively of flag matroid polytopes \cite[Theorem 4.4.2]{BEZ}.
Therefore a flag matroid can also be thought of as a function from the vertices of a flag matroid polytope to $\RR$.
Again we write $\fvm^x:=\subdiv_\fvm(P_\fM)^x$ for \bemph{initial flag matroids}. 

The \bemph{tropical linear space} $L_\mu$ given by a valuated matroid $\mu$ is the set of points $x\in \TT^n$ that satisfy that
\[
\bigoplus_{i\in [n]}x_i \odot \mu_{T\backslash i} \enspace \text{ achieves the minimum at least twice.}
\]
It is the subcomplex of $\Sigma_\mu$ (as defined in \Cref{sec:mixed}), consisting of all cells $P_M^\vee$ where $M$ is a loop free matroid in the subdivision of induced by $\mu$ \cite{Speyer:2008}. 
 
Since all tropical linear spaces contain $\RR(1,\dots,1)$ in the lineality space, we think of tropical linear spaces as sitting inside the \bemph{tropical projective plane} 
\[\TP := \TT^n/ \RR(1,\dots,1).\] 
Similar to the non-tropical case, a tropical linear space is contained in another tropical linear space if and only if the corresponding valuated matroids form a quotient \cite{Haque}.
Hence valuated flag matroids $(\mu_1,\dots,\mu_s)$ are cryptomorphic to flags of tropical linear spaces $L_{\mu_1}\subseteq \dots L_{\mu_s}$.

A valuated matroid $\mu$ is said to be \bemph{representable} (or \bemph{realizable}) by a linear subspace $L$, if $L_\mu = \trop(L)$.
A beautiful fact about the tropicalization of linear spaces is that tropicalizing commutes with taking Pl\"ucker coordinates, i.e. the tropicalization of a linear space is given by the valuated matroid obtained by taking the valuation of the Pl\"ucker coordinates the linear space \cite{SS}.
%


%
%
\subsection{Positivity}

The following two notions of positivity exist for the flag variety.
The \emph{Pl\"ucker non-negative flag variety} $\Fl^{\ge0}(\fd,\RR^n)$ consists of all flag of linear spaces whose Pl\"ucker cooridnates are non-negative.
In other words, $\Fl^{\ge0}(\fd,\RR^n)$ consists of flags of spaces that come from the row span of a matrix in $\RR^{n\times n}$ such that all top $d_i\times d_i$ minors are non-negative for every $i\in[s]$.
A stronger notion, is the \emph{totally non-negative} (or just $\tnn$) \emph{flag variety} , $\Fl^{\tnn}(\fd, \RR^n)$, which consists of all flags of linear spaces which can be extended to a complete flag in $\Fl^{\ge 0}(\fn,\RR^n)$, i.e. flags that can be realized as the row span of a totally non-negative matrix.
These two notions agree if and only if the rank $\fd$ consists of consecutive integers \cite{bloch2022two}.

A flag matroid $\fM$ such that $\fM= \fM(\fL)$ for a flag $\fL\in \Fl^{\ge0}(\fd,\RR^n)$ is called a \emph{flag positroid}.
For the Grassmannian case $\fd= (d)$, this recovers the notion of \emph{positroid} introduced by Postnikov \cite{postnikov}.
If $\fL\in \Fl^{\tnn}(\fd,\RR^n)$, then $\fM$ belongs to a smaller class of matroids (when $\fd$ is not consecutive) which corresponds to flag matroids whose polytope is a Bruhat polytope (see \Cref{prop:richard}).


The study of positivity for valuated matroids started with \cite{SpeyerWilliams:2005}, where the following framework for the positive part of any tropical variety.
Let $\C = \CC\ldb t\rdb$ be the field of generalized Puiseux series with complex coefficients\footnote{The standard field with non-trivial valuation which is mostly used is in tropical geometry is the field of Puiseux series.
However, this field has the disadvantage that the valuation is not surjective; the value group is only $\QQ$ and not all of $\RR$.
Therefore, only valuated matroids with rational are representable over this field. 
We can avoid dealing with this inconvenience by working with a larger field whose valuation is surjective.}
(see \cite{Markwig:2010} for details on this field)
and let $\R_{\ge 0}$ subset of $\C$ consisting of all series whose leading coefficient is a positive real number (and $0$). 
Given a subvarierty $X=V(I)$ of $\C^n$, the non-negative part $\trop^+(X)$ of the tropicalization of $\trop(X)$ has the following two equivalent interpretations \cite{SpeyerWilliams:2005}:
\begin{itemize}
	\item The image under the valuation map of $X\cap \R_{\ge 0}^n$.
	\item The intersection
	\begin{equation}
		\bigcap_{f\in I} \trop^+(f)
	\label{eq:trop+}
	\end{equation}

	where $\trop^+(f)$ is the set of $w\in \RR^n$ such that $\init_w(f)$ does not contain any non-zero polynomial in $\R_{\ge 0}$. 
\end{itemize}
This idea can be extended to projective varieties and their tropicalizations landing in tropical projective space.
Just like with usual tropicalization, the intersection in (\ref{eq:trop+}) may require infinite elements $f\in I$.
A subset of $I$ such that the intersection of $\trop^+(f)$ for every element of that subset produces $\trop^+(X)$ is called a \emph{positive-tropical basis} (see \cite{brandenburg2022tropical} for a comparison with usual tropical bases).

When applied to the flag variety, we obtain the \emph{non-negative tropical flag variety} $\TFl^{\ge 0}(\fd,n) :=\trop^+(\Fl(\fd,\C_{\ge^0}^n))$.
We call a valuated flag matroid $\fvm \in \TFl^{\ge 0}(\fd,n)$ a \emph{valuated flag positroid}.
By the observations above, $\TFl^{\ge 0}(\fd,n)$ is the image under the valuation map of $\Fl^{\ge0}(\fd,\RR^n)$.
The \emph{totally non-negative tropical flag variety} $\TFl^\tnn(\fd,n)$ is the image under the valuation map of $\Fl^{\tnn}(\fd,\RR^n)$.
A valuated flag matroid $\fvm$ is called $\tnn$ if it is in $\TFl^\tnn(\fd,n)$.

The \emph{non-negative flag Dressian } is 
\[
\FlDr^{\ge 0}(\fd,n) :=  \cap_f \trop^+(f)
\]
where the intersection ranges through all Pl\"ucker and incidence relations $f$. 
This corresponds to flag matroids over the hyperfield of signed tropical numbers with a positive orientation \cite{JO}.

It was simultaneously proven in \cite{ArkaniHamedLamSpradlin:2021} and \cite{SpeyerWilliams:2021} that the 3-term Pl\"ucker relations form a positive-tropical basis for the Grassmannian.
Similarly, it was simultaneously proven in \cite{JLLO} and \cite{boretsky} that the 3-term incidence relations form a positive-tropical basis for the complete flag variety.
This was generalized to flag varieties of consecutive rank \cite{BEW}.
The positive tropicalization of the three term incidence relations,
\begin{equation}
\mu(Sj)\odot\mu(Sik) =\mu(Si)\odot\mu(Sjk)\oplus\mu(Sl)\odot\mu(Sij)
\label{eq:3tin}
\end{equation}
for $i<j<l$, are equivalent to Equations (\ref{eq:tnna}), (\ref{eq:brua}) and (\ref{eq:posa}). 
Whether the incidence relations together with the Pl\"ucker relations form a tropical basis of the flag variety for any rank $\fd$ is essentially the content of part 3. of \Cref{conj:main}.

We say that a subdivision of flag matroid polytope $\fM$ is a \emph{flag positroid subdivision} if all the polytopes are flag positroids and it is a \emph{Bruhat subdivision} if all the polytopes are Bruhat polytopes.
For consecutive rank, the non-negative flag Dressian is equal to the non-negative tropical flag variety and it corresponds exactly to valuated flag matroids inducing a flag positroid subdivision, or equivalent a Bruhat subdivision \cite{ArkaniHamedLamSpradlin:2021,BEW,JLLO,SpeyerWilliams:2021}.
As Bruhat polytopes differ from flag positroid polytopes for non-consecutive rank, we have that valuated flag matroids inducing Bruhat subdivisions form a strictly smaller class than subdivisions inducing flag positroid subdivisions.

\begin{remark}
In \cite{tamayo} the term flag positroid is used for positroids which form a matroid quotient.
However one can easily construct positroids which form a quotient but do not consitute a flag positroid in our terms, for example the flag matroid with bases ${1,2,3,12,13}$ (see \cite[Example 3.3]{JLLO}. 
Therefore we restrict the term flag positroid to this stronger realization condition following \cite{BEW}.
\end{remark}

\subsection{Matroid constructions}
Given a valuated matroid $\mu$ of rank $d$, its dual $\mu^*$ is a valuated matroid of rank $n-d$ such that 
\[
\mu^*(B) := \mu([n]\backslash B).
\]
Given an element $i\in [n]$, which is not a coloop, the \bemph{deletion} of $i$ from $\mu$ is the valuated matroid $\mu\backslash i$ over $n\backslash i$ such that 
\[
\mu\backslash i(B) := \mu(B)
\]
where $i\notin B$. If $i$ is not loop, the \bemph{contraction} of $i$ from $\mu$ is the valuated matroid $\mu/i$ over $n\backslash i$ such that 
\[
\mu/ i(B) := \mu(B\backslash i)
\]
where $i\in B$. 
If $i$ is either a loop or a coloop,  $\mu\backslash i = \mu/ i(B)$. 
Given a subset $I\subseteq [n]$, the valuated matroid $\mu\backslash I$ is the successive deletion of all elements in $I$, and similarly $\mu/I$ is the succesive contraction by all elements in $I$.
Any valuated matroid of the form $\mu/ I\backslash I$ is called a \bemph{minor}.
Sometimes is convenient for notation purposes to write $\mu|_I := \fvm\backslash ([n]\backslash I)$ the \bemph{restriction} of $\fvm$ to $I$.

The \bemph{translation} of tropical linear space $L_\mu+x$ by a vector $x\in \RR^n$ is also tropical linear space.
It corresponds to the valuated matroid $\mu+x$ where
\[
(\mu+x)(B) := \mu(B)+e_B\cdot x.
\]

All of the above constructions can be extended to valuated flag matroids by applying it to each constituent. 
That is
\begin{itemize}
	\item The \bemph{dual} is $\fvm^*:=(\mu_s^*,\dots,\mu_1^*)$ with rank $\fd^*=(n-d_s,\dots,n-d_1)$.
	\item The \bemph{minors} $\fvm/I\backslash J = (\mu_1/I\backslash J,\dots,\mu_s/I\backslash J)$, after possibly removing repeated elements.
	\item The \bemph{translation} $\fvm+x=(\mu_1+x,\dots,\mu_s+x)$. 
\end{itemize}
The details that these are indeed flag valuated matroids can be found in \cite{BEZ} or \cite{CO}.
We will make use of the following theorem:

\begin{theorem}{\cite[Theorem 5.1.2]{BEZ}}
\label{prop:bez}
Two matroids $\mu_1$ and $\mu_2$ of rank $d$ and $d+1$ form a valuated flag matroid $(\mu_1,\mu_2)$ if an only if there exists a valuated matroid $\nu$ over $[n+1]$ such that $\mu_1=\nu/(n+1)$ and $\mu_2=\nu\backslash(n+1)$ 
\end{theorem}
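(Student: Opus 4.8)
The plan is to realize, at the level of valuated matroids, the classical bijection between single-element extensions and elementary quotients. Write $d$ for $d_1$ and $d+1$ for $d_2$. Given valuated matroids $\mu_1,\mu_2$ on $[n]$ of ranks $d$ and $d+1$, I would define $\nu\colon\binom{[n+1]}{d+1}\to\TT$ by $\nu(C):=\mu_2(C)$ if $n+1\notin C$ and $\nu(C):=\mu_1(C\setminus\{n+1\})$ if $n+1\in C$. By construction $\nu/(n+1)=\mu_1$ and $\nu\setminus(n+1)=\mu_2$, and conversely any $\nu$ on $[n+1]$ with those two minors is forced to be of this shape (with $n+1$ neither a loop nor a coloop, forced by the prescribed ranks, since $\mu_1,\mu_2$ are nonempty of ranks $d$ and $d+1$). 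So the statement reduces to the single equivalence: $\nu$ is a valuated matroid if and only if $(\mu_1,\mu_2)$ is a valuated flag matroid.

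For the direction ``$\nu$ a valuated matroid $\Rightarrow$ $(\mu_1,\mu_2)$ a valuated flag matroid'', first note that $\mu_1=\nu/(n+1)$ and $\mu_2=\nu\setminus(n+1)$ are valuated matroids, being minors of one. To obtain the quotient relation I would fix $S\in\binom{[n]}{d-1}$ and $T\in\binom{[n]}{d+2}$ and apply the Grassmann--Pl\"ucker relations of $\nu$ to the pair $S\cup\{n+1\}\in\binom{[n+1]}{d}$ and $T\in\binom{[n+1]}{d+2}$: since $\nu(Si\cup\{n+1\})=\mu_1(Si)$ and $\nu(T\setminus i)=\mu_2(T\setminus i)$ for every $i\in T\setminus S$, that relation is literally $\bigoplus_{i\in T\setminus S}\mu_1(Si)\odot\mu_2(T\setminus i)$ achieving the minimum at least twice, i.e.\ the incidence relation \eqref{eq:troplucker} for $(\mu_1,\mu_2)$.

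The substantial direction is the converse: assuming $(\mu_1,\mu_2)$ is a valuated flag matroid, show $\nu$ is a valuated matroid. I see two routes. The combinatorial route is to verify only the three-term Grassmann--Pl\"ucker relations for $\nu$ (which suffice to recognize a valuated matroid, cf.\ \cite{OPS}) and to split into the four cases according to whether $n+1$ lies in $S$, in $T$: a short computation shows that such a relation for $\nu$ becomes a three-term Pl\"ucker relation of $\mu_2$ (when $n+1\notin S\cup T$), a three-term Pl\"ucker relation of $\mu_1$ (when $n+1\in S\cap T$), or a three-term incidence relation of $(\mu_1,\mu_2)$ for an appropriate pair $(S',T')$ extracted from $(S,T)$ (in the two mixed cases) --- all of which hold. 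The polyhedral route instead observes that the support $N$ of $\nu$ has $N/(n+1)=M_1$, $N\setminus(n+1)=M_2$ where $M_i$ is the support of $\mu_i$, so $N$ is a matroid by the classical theory of single-element extensions (as $(M_1,M_2)$ is a matroid quotient); the coordinate projection $\RR^{n+1}\to\RR^n$ forgetting the last coordinate identifies $P_N$ with the Cayley polytope $\operatorname{Cay}(P_{M_2},P_{M_1})$ and carries the height function $\nu$ to the pair of heights $(\mu_2,\mu_1)$. By the Cayley trick \cite{Triangulations} the regular subdivision $\subdiv_\nu(P_N)$ corresponds cell by cell to the regular mixed subdivision $\subdiv_{(\mu_1,\mu_2)}(P_{M_1}+P_{M_2})$, matroid polytopes on one side matching flag matroid polytopes on the other (the non-valuated version of the theorem, applied to the initial matroids). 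Since $(\mu_1,\mu_2)$ is a valuated flag matroid, \cite[Theorem 4.4.2]{BEZ} makes every mixed cell a flag matroid polytope, hence every cell of $\subdiv_\nu(P_N)$ a matroid polytope, which is precisely the statement that $\nu$ is a valuated matroid.

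The main obstacle is this converse direction, and in particular the fact that one cannot simply check a single \emph{general} Grassmann--Pl\"ucker relation of $\nu$: when $n+1\in T\setminus S$ the resulting relation is not, in general, a single incidence relation of $(\mu_1,\mu_2)$, which is exactly why one must either descend to three-term relations (and quote that these recognize valuated matroids) or pass through the Cayley-trick picture (where the work migrates into the polytope identification and the matching of initial matroids on the two sides). In both routes the hypothesis that $(\mu_1,\mu_2)$ is a valuated flag matroid is used solely to supply the incidence relations, which are the data invisible to $\mu_1$ and $\mu_2$ separately.
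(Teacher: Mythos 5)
The statement you proved is quoted in the paper as \cite[Theorem 5.1.2]{BEZ} without proof, so there is no internal argument to compare against; your task was really to reconstruct BEZ's result. Your reconstruction is correct, and the reduction to the single equivalence ``$\nu$ is a valuated matroid iff $(\mu_1,\mu_2)$ is a valuated flag matroid'' is the right normal form. In the combinatorial route the case analysis works out exactly as you claim: $n+1\notin S\cup T$ gives a three-term Pl\"ucker relation of $\mu_2$; $n+1\in S\cap T$ gives one for $\mu_1$; $n+1\in S\setminus T$ gives a three-term incidence relation of $(\mu_1,\mu_2)$ verbatim; and $n+1\in T\setminus S$ becomes, after rewriting, the three-term incidence relation for the pair $(S\cap T,\,(S\cup T)\setminus\{n+1\})$. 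This last case is the only one needing any unwinding, and you correctly identify it as the reason the general Pl\"ucker relation of $\nu$ does not reduce to a single incidence relation. One imprecision: attributing to \cite{OPS} the fact that the three-term relations recognize valuated matroids is off; that reference is about the fan structure on the Dressian, while the recognition statement is the local exchange theorem of Dress and Wenzel (or Murota and Tamura). Your Cayley-trick route is also correct --- $P_N$ is the Cayley polytope of $P_{M_2}$ and $P_{M_1}$, the cell correspondence works as you describe, and the unweighted quotient--extension bijection applied to initial matroids turns flag matroid cells on the Minkowski-sum side into matroid cells on the Cayley-polytope side.
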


For realizable matroids, these construction have all an interpretation at the level of linear spaces. If $\mu=\mu(L)$, then 
\begin{itemize}
	\item $\mu^*$ is the the valuated matroid of the orthogonal complement of $L$.
	\item $\mu\backslash i$ is the valuated matroid of the projection of $L$ into $\RR^{[n]\backslash i}$,
	\item  $\mu/i$ is the valuated matroid of $L\cap\{x_i =0\}$
	\item $\mu+x$ is the valuated matroid of the result of scaling $L$ in the coordinate $i$ by $t^{x_i}$ for every $i$.
\end{itemize}

A crucial fact is that all of these notions preserve positivity \cite[Proposition 3.5]{ardila2016positroids}.
So this implies that if $\fvm$ is $\tnn$, minors, duals or translations of $\fvm$ are also $\tnn$ and if $\fvm$ is a flag positroid, minors, duals and translations of $\fvm$ are also flag positroids.

Finally, we want to remark that the homogenous nature of the lambda-values of $\fvm$ as defined in \Cref{def:lambda} implies that they are invariant under translation.
This is a fact that we will often make use of ahead.

\section{Bruhat polytopes}
\label{sec:bruhat}
The following is an ad hoc introduction to Bruhat (interval) polytopes for type $\textsc{A}$, in a purely combinatorial fashion. 
For simplicity, we drop the ``interval'' and call them just Bruhat polytopes, following \cite{JLLO}.
For details on generalized Bruhat polytopes for a general flag variety $G/P$ we suggest \cite[Section 6]{TsukermanWilliams}.


A \bemph{word} for a permutation $v$ is sequence of $w=(w_1,\dots,w_m)$ where $v=w_1\cdots w_m$ using the precomposition product and each $w_i$ is a transposition of the form $\tau_j := (j,j+1)$. 
We say the word is \bemph{reduced} if it is length $m$ is minimal among all words for $v$.
A subword of $w$ is a (not necessarily consecutive) subsequence $w'=(w_{i_1},\dots w_{i_{m'}})$ where $1\le i_1 < \dots < i_{m'} \le m$.
The \bemph{Bruhat order} on the symmetric group $\Sym(n)$ is given by $u\le v$ if and only if for any (equivalently every) reduced word $w$ for $v$ there is a subword $w'$ for $u$. 
The \bemph{length} of a permutation $v$ is the minimal length of a word for $v$.

Two different definitions have been used simultaneously in the literature to assign a polytope to a Bruhat interval.
These are based on two different ways of identifying $\Sym(n)$ with the vertices of the permutahedron $\Pi_n$.
\begin{align*}
\phi:\Sym_n &\to \vertices(\Pi_n)\\
x &\mapsto (x(1),\dots,x(n))
\end{align*}
and
\begin{align*}
\psi:\Sym_n &\to \vertices(\Pi_n)\\
x &\mapsto x\cdot e_\fn
\end{align*}
where $e_{[\fn]}:=e_{[1]}+\dots+e_{[n]}=(n,\dots,1)$ and $x\cdot e_\fn$ is given by the action of $\Sym_n$ on $\RR^n$ of permuting coordinates. 
Following terminology introduced in \cite{BEW}, given two permutations $u\le v$, we call
\[
\BP_{u,v} := \conv(\phi([u,v])) = \conv\{(x(1),\dots,x(n)) \mid x\in [u,v]\}
\]
the \bemph{Bruhat polytope} and
\[
\tBP_{u,v} := \conv(\psi([u,v])) = \conv\{x\cdot e_\fn \mid x\in [u,v]\}
\]
the \bemph{twisted Bruhat polytope}.

For general flag varieties $G/P$, the twisted Bruhat polytope has been generalized, sometimes called \bemph{generalized Bruhat interval polytopes}.
This works as follows for type $\textsc{A}$.
Let $\fd=(d_1,\dots,d_s)$ be a rank vector and let $e_\fd:=e_{[d_1]}+\dots+e_{[d_s]}$. 
Consider the function
\begin{align*}
\pi^\fd: \vertices(\Pi_n) &\to \vertices(\Delta(\fd,n))\\
p &\mapsto (f^\fd(p(1)),\dots, f^\fd(p(1))).
\end{align*}
where $f: [n]\to [s]$ is the function where $\hat{\phi}^\fd(i)$ is the $(n+1-i)$-coordinate of $e_\fd$, i.e. the number of times $n+1-i$ appears in the flag $[d_1]\subset \dots \subset [d_s]$.
At the level of flag matroid polytopes, $\pi^\fd$ is essentially a forgetful morphism, taking $P_\fM$ to $P_{\fM'}$ where $\fM'$ is the result of dropping from $\fM$ all constituents whose rank is not in $\fd$.
The stabilizer of $\Sym_n$ with respect to $e_\fd$ subudivides $\Sym_n$ into cosets which are the fibers of the map $\pi^\fd\circ\psi$. 
We call $v\in \Sym_n$ a \bemph{minimal coset representative} of rank $\fd$ if it is the smallest element under the Bruhat order of its coset, i.e. its fiber under $\pi^\fd\circ\psi$.
Let $\tBI(\fd,n)$ be the set of Bruhat intervals $[u,v]$ where $v$ is a minimal coset representative of rank $\fd$. 

\begin{definition}
\label{def:tbru}
Given an interval $[u,v]\in\tBI(\fd,n)$, the (generalized) twisted Bruhat polytope $\tBP_{u,v}^\fd$ is
\[
\tBP_{u,v}^\fd := \conv(\pi^\fd\circ\psi([u,v])) = \conv\{x\cdot e_\fd \mid x\in [u,v]\}.
\]
\end{definition}

Because $\phi$ is the most natural way of identifying permutations with the vertices of the permutahedron, the untwisted variant of Bruhat polytopes is usually used as definition in the complete flag case, \cite[Definition A.5]{KodamaWilliams:2015} and \cite[Definition 2.2]{KodamaWilliams:2015}.
However, for the general case, the twisted variant is used because of the connection to Richardson varieties, \cite[Definition 6.9]{KodamaWilliams:2015} and \cite[Definition 7.8]{KodamaWilliams:2015}.
These are cells in a partition of $\Fl^\tnn(\RR^n,\fd)$ which are indexed by a pair $u\le v$ with $v$ a minimal coset representative. 
Concretely, we have:

\begin{theorem}{\cite[Corollary 6.11]{KodamaWilliams:2015}}
\label{prop:richard}
Let $\fL$ be a flag of linear spaces in the Richardson variety $\mathcal{R}_{u,v} \subseteq \Fl^{\tnn}(\RR^n,\fd)$.
If $\fM$ is the flag matroid of $\fL$, then $P_\fM = \tBP_{u,v}$.
\end{theorem}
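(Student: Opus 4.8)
The plan is to exhibit both polytopes as the convex hull of one and the same finite point set, read off from the Pl\"ucker vanishing pattern of points in the non-negative Richardson cell. Write $M_{d_j}:=M(L_{d_j})$, so that $B$ is a basis of $M_{d_j}$ iff $p_B(L_{d_j})\neq 0$. Since the polytope of a flag matroid is the convex hull of the points $e_\fB$ over chains of bases of its constituents (the uniform case is recalled in \Cref{sub:fm}, and in general $P_{\fM}=\conv\{e_{\fB}: \fB=(B_{d_1}\subset\cdots\subset B_{d_s})\text{ a chain of bases}\}$ by the generalized-permutahedron description of flag matroid polytopes), we get
\[
P_\fM \;=\; \conv\Big\{\, e_\fB \;:\; \fB=(B_{d_1}\subset\cdots\subset B_{d_s}),\ \prod_{j=1}^{s} p_{B_{d_j}}(\fL)\neq 0 \,\Big\}.
\]
On the other side, $\tBP_{u,v}=\conv\{\pi^\fd\psi(w): w\in[u,v]\}$, and one checks that $\pi^\fd\psi(w)=w\cdot e_\fd=e_{\fB(w)}$ for the rank-$\fd$ flag $\fB(w)=\big(w(\{1,\dots,d_1\})\subset\cdots\subset w(\{1,\dots,d_s\})\big)$. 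So it suffices to prove: a rank-$\fd$ chain $\fB$ has $\prod_j p_{B_{d_j}}(\fL)\neq 0$ if and only if $\fB=\fB(w)$ for some $w\in[u,v]$. (Equivalently one could phrase everything via moment maps: $P_\fM$ is the moment polytope of $\overline{T\fL}$, whose $T$-fixed points are exactly $\{w: u\le w\le v\}$ precisely because of total non-negativity.)

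\textbf{The vanishing pattern on the non-negative Richardson cell.} I would extract this from the Marsh--Rietsch parametrization of $\mathcal{R}_{u,v}$: fixing a reduced word for $v$ and the distinguished subexpression for $u$, every point of the cell is an explicit product of Chevalley generators and torus elements in positive parameters, under which each Pl\"ucker coordinate becomes a subtraction-free Laurent expression in those parameters. Hence $p_B(L_k)\neq 0$ exactly when the associated monomial is ``allowed'', and, working Grassmannian by Grassmannian via the description of $\mathcal{R}_{u,v}$ as the intersection of a Schubert cell for $v$ with an opposite Schubert cell for $u$, one finds that this happens precisely when $\pi_k(u)\le B\le \pi_k(v)$ in the Bruhat (= Gale) order on $\binom{[n]}{k}$, where $\pi_k$ denotes the projection to the $k$-th Grassmannian. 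Total non-negativity is exactly what is used here: over $\CC$ a Pl\"ucker coordinate that is generically nonzero on the Richardson variety could still vanish at a special point by cancellation, but a subtraction-free expression in strictly positive parameters cannot.

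\textbf{The combinatorial lifting.} By the previous step, $\prod_j p_{B_{d_j}}(\fL)\neq 0$ iff $\pi_{d_j}(u)\le B_{d_j}\le \pi_{d_j}(v)$ for all $j$, so it remains to match this ``box'' of partial chains with $\{\fB(w): w\in[u,v]\}$. The inclusion $\supseteq$ is immediate from Ehresmann's criterion ($u\le w$ in $\Sym_n$ iff $\pi_k(u)\le\pi_k(w)$ for every $k$), applied at $k=d_1,\dots,d_s$. For $\subseteq$, given a partial chain $\fB$ in the box one must extend it to a complete flag of subsets, i.e.\ a permutation $w$, with $\pi_k(u)\le B_k\le\pi_k(v)$ for all $k\in[n]$ and not merely for $k\in\fd$; Ehresmann then gives $u\le w\le v$ with $\fB(w)=\fB$. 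This extension is possible because $v$ is a \emph{minimal coset representative of rank $\fd$}, which makes $[u,v]\twoheadrightarrow \pi^\fd([u,v])$ with no obstruction at the ranks skipped by $\fd$; concretely, one fills in the missing ranks one at a time using the lifting property of the Bruhat order.

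\textbf{Conclusion and the main obstacle.} Combining the three steps, $P_\fM$ and $\tBP_{u,v}$ are the convex hulls of the common set $\{e_{\fB(w)}: w\in[u,v]\}$, hence equal. I expect the extension step in the third paragraph to be the genuine difficulty: turning a partial chain that lies in the rank-$\fd$ box into an honest permutation lying in the full interval $[u,v]$ is exactly where the minimal-coset-representative hypothesis on $v$ is needed, and it requires a careful induction on the skipped ranks rather than a direct appeal to Grassmannian facts. A secondary delicate point is making rigorous the ``no accidental Pl\"ucker vanishing on $\Fl^\tnn$'' claim of the second paragraph, which is precisely where the Marsh--Rietsch parametrization (and not mere reasoning over $\CC$) must be invoked.
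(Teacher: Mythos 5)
The statement you are proving is not proved in the paper at all: it is imported verbatim as \cite[Corollary 6.11]{KodamaWilliams:2015}, so the only comparison available is with your argument itself, and there the second step contains a genuine error. You claim that for $\fL$ in the non-negative part of $\mathcal{R}_{u,v}$ one has $p_B(L_{d_j})\neq 0$ \emph{if and only if} $\pi_{d_j}(u)\le B\le \pi_{d_j}(v)$ in the Gale order. Only the ``only if'' direction is true (and it needs no positivity: it follows over $\CC$ from the containment of $\mathcal{R}_{u,v}$ in a Schubert and an opposite Schubert variety). The ``if'' direction is false. Take $n=4$, $\fd=(2)$, and the point of $\Gr(2,4)^{\ge 0}$ with $p_{14}=0$ and all other Pl\"ucker coordinates equal to $1$ (this satisfies $p_{13}p_{24}=p_{12}p_{34}+p_{14}p_{23}$). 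It lies in a totally non-negative Richardson cell whose indexing interval $[u,v]$ has $u([2])=\{1,2\}$ and $v([2])=\{3,4\}$, since these are the Gale-minimal and Gale-maximal bases; yet $p_{14}=0$ although $\{1,2\}\le\{1,4\}\le\{3,4\}$. The point your second paragraph misses is that a subtraction-free Marsh--Rietsch expression can be \emph{identically} zero (no allowed monomial), and deciding for which $B$ this happens is exactly the positroid combinatorics, which is strictly finer than the Gale box.

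Because of this, the ``combinatorial lifting'' of your third step is aimed at an identity that is also false: in the same example $\{1,4\}$ lies in the box but no $w\in[u,v]$ has $w([2])=\{1,4\}$ (consistently, $e_{14}$ is not a vertex of $\tBP_{u,v}$), so no induction on the skipped ranks can produce the lift, and carried through your argument would output the full hypersimplex $\Delta(2,4)$ rather than $\tBP_{u,v}$. The repair is to abandon the level-by-level box description and control only the extreme, permutation-indexed coordinates: for $\fL$ in the cell, $\prod_j p_{w([d_j])}(\fL)\neq 0$ if and only if $u\le w\le v$ --- the ``only if'' from the Schubert/opposite-Schubert containments (using that Bruhat order on minimal coset representatives is componentwise Gale order), the ``if'' from Marsh--Rietsch positivity --- and then identify $P_\fM$ with the convex hull of the $e_{\fB(w)}$ over chains of bases via the moment map/Gelfand--Serganova description. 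That is essentially the route of Kodama--Williams, and it bypasses the box entirely.
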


We have in particular that twisted Bruhat polyopes are flag matroid polytopes and when $\fd$ is consecutive, twisted Bruhat polytopes are exactly the same as flag positroid polytopes.
Another important result about twisted Bruhat polytopes which we will use later is the following:

\begin{theorem}{\cite[Theorem 7.13]{TsukermanWilliams}}
\label{prop:faces}
The face of twisted Bruhat polytope is a twisted Bruhat polyope.
\end{theorem}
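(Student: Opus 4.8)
This statement is \cite[Theorem~7.13]{TsukermanWilliams}, so the plan below recovers the shape of that argument. Since every proper face of a polytope is a face of some facet of it, it suffices to prove that each \emph{facet} of a generalized twisted Bruhat polytope $\tBP^\fd_{u,v}$ is again a generalized twisted Bruhat polytope and then induct on dimension (the base case being a vertex $\tBP^\fd_{w,w}$). A facet is the set of points of $\tBP^\fd_{u,v}$ maximizing its inner normal $\ell$; because $\tBP^\fd_{u,v}=\conv\{x\cdot e_\fd\mid x\in[u,v]\}$, this facet equals $\conv\{x\cdot e_\fd\mid x\in A_\ell\}$, where
\[
A_\ell:=\{x\in[u,v]\mid \langle x\cdot e_\fd,\ell\rangle\text{ is maximal over }[u,v]\}.
\]
So the task reduces to producing $u'\le v'$, with $v'$ a minimal coset representative of rank $\fd$, such that $\conv\{x\cdot e_\fd\mid x\in A_\ell\}=\tBP^\fd_{u',v'}$.

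I would first do the complete-rank case $\fd=\fn$; the general case follows by the same argument with $e_\fd$ in place of $e_\fn$, accounting for the stabilizer of $e_\fd$, which is itself a Young subgroup. Maximizing $\langle w\cdot e_\fn,\ell\rangle$ over $w\in[u,v]$ is equivalent to minimizing $\sum_j j\,\ell_{w(j)}$. Let $\Gamma_1,\dots,\Gamma_k$ be the level sets of $\ell$ listed by decreasing value, let $W_J=\Sym_{\Gamma_1}\times\dots\times\Sym_{\Gamma_k}$ be the corresponding Young subgroup, and use the unique factorization $w=w^Jw_J$ with $w^J$ a minimal coset representative. The objective splits into an ``inter-block'' part depending only on $w^J$ and an ``intra-block'' part depending only on $w_J$. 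The combinatorial heart of the proof is that, over $[u,v]$, the inter-block part pins $w^J$ to a single element $\bar v$ (using compatibility of the Bruhat order with the parabolic projection $w\mapsto w^J$) and the intra-block part then forces the $\Sym_{\Gamma_i}$-components of $w_J$ to range over Bruhat subintervals $[u_i,v_i]\subseteq\Sym_{\Gamma_i}$; this is a Deodhar-type statement about how a Bruhat interval interacts with the factorization $w=w^Jw_J$. Granting it, $A_\ell$ corresponds under the coset identification to $\{\bar v\}\times\prod_i[u_i,v_i]$, so $\conv\{\psi(w)\mid w\in A_\ell\}$ is (a translate of) the product $\prod_i\tBP_{u_i,v_i}$ of twisted Bruhat polytopes for the groups $\Sym_{\Gamma_i}$. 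Finally, a product of twisted Bruhat polytopes is again a twisted Bruhat polytope: $\prod_i\Sym_{\Gamma_i}$ embeds in a symmetric group as a parabolic subgroup and the product of the $[u_i,v_i]$ is a single Bruhat interval there; one checks the minimal-coset-representative condition is inherited, completing the induction.

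The main obstacle is precisely the factorization claim in the second paragraph --- that the $\ell$-maximizers inside a Bruhat interval, read through $w=w^Jw_J$, are a fixed minimal coset representative times a product of Bruhat intervals. This is where the fine structure of the Bruhat order (the subword and lifting properties, and Deodhar-style arguments relating intervals to parabolic quotients) is indispensable; the reduction to facets, the product bookkeeping, and the descent to general $\fd$ are all formal once it is available. A more geometric alternative, via \Cref{prop:richard}, would instead show that a face of the flag matroid polytope of a Richardson variety is the flag matroid polytope of a product of Richardson varieties in smaller partial flag varieties --- but this merely repackages the same combinatorial content.
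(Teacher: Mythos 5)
The statement in question is cited as \cite[Theorem~7.13]{TsukermanWilliams}, and the paper does not reprove it; so there is no ``paper's own proof'' to compare against --- you are reconstructing the argument of the cited reference. Your reduction to facets, the bookkeeping of the inter- and intra-block objective, and the observation that everything hinges on showing that $\ell$-maximizers inside a Bruhat interval inherit a product-of-intervals structure, are all reasonable; and this is indeed where the real combinatorics lives. But your proposed mechanism for that combinatorial step has a genuine gap, and it is not the route Tsukerman and Williams actually take.

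The gap: you take $W_J=\Sym_{\Gamma_1}\times\dots\times\Sym_{\Gamma_k}$, where $\Gamma_i$ are the level sets of $\ell$, and then invoke a ``unique factorization $w=w^Jw_J$ with $w^J$ a minimal coset representative'' together with Bruhat-order compatibility. This machinery is available only when $W_J$ is a \emph{standard} parabolic, i.e.\ when the $\Gamma_i$ are consecutive integer intervals. For a general facet normal $\ell$ the level sets are arbitrary subsets, $W_J$ is merely a Young subgroup conjugate to a standard parabolic, and neither the uniqueness of minimal coset representatives in the Bruhat order nor Deodhar-style statements about how Bruhat intervals meet cosets hold in the form you need. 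You can repair the inter-block part (the $\ell$-maximizers over all of $\Sym_n$ do form a left coset $w_0W_K$ of a \emph{standard} parabolic $W_K$, after conjugating $W_J$ by $w_0$), but the claim that $[u,v]\cap w_0W_K$ is again a Bruhat interval, which is the heart of your sketch, is precisely the hard part and is not a consequence of Deodhar's lemma or parabolic factorization alone; indeed ``Bruhat interval intersected with a coset is a Bruhat interval'' is false in general. What Tsukerman and Williams actually prove and use is their \emph{Generalized Lifting Property}: given $u<v$ and a cost vector $\ell$, there is a reflection $t$ with $u\lessdot ut\le v$ and $u\le vt\lessdot v$ chosen so as to respect $\ell$, and they induct on $\ell(v)-\ell(u)$ to show the $\ell$-maximizers form a Bruhat interval; the passage from $G/B$ to $G/P$ (their Section~7) then goes through parabolic projections. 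So your proposal and the cited proof do not follow the same path, and as written your key step does not go through. Your final remark, rephrasing the claim geometrically via \Cref{prop:richard} and Richardson varieties, is a legitimate alternative in the spirit of Kodama--Williams, but it is not how Theorem~7.13 of \cite{TsukermanWilliams} is proved either.
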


Even though $\BP_{u,v}$ may differ from $\tBP_{u,v}$, having these two different definitions is not a big issue for the complete flag case because the set of Bruhat polytopes is the same as the set of twisted Bruhat polytopes. 
To see this,notice that $\phi^{-1}\circ\psi$ is the function $\iota:\Sym_n\to \Sym_N$ given by
\begin{equation}
\iota: x\mapsto (i\mapsto n+1-x^{-1}(i)).
\label{eq:iota}
\end{equation}

This function consists of the composition of the order preserving involution $x\mapsto x^{-1}$ and the order reversing involution $x\mapsto (i\mapsto n+1-x(i))$.
Therefor $\iota$ reverses the Bruhat order.
As cosets are of size 1 in the complete flag case, we do not impose any conditions on $v$ to form a twisted or untwisted Bruhat polytope. 
So we have $\tBP_{u,v} = \BP_{\iota(u),\iota(v)}$ and hence the set of Bruhat polytopes is the same as the set of untwisted Bruhat polytopes for this case.

For non-complete flags, we can also define untwisted Bruhat polytopes using $\phi$. 
Namely, let $\BI(\fd,n)$ be the set of Bruhat intervals $[u,v]$ where $v$ is the minimal element in its fiber of $\pi^\fd\circ\phi$.
\begin{definition}
\label{def:bru}
Given an interval $[u,v]\in \BI(\fd,n)$, the (generalized) \bemph{Bruhat polytope} $\BP_{u,v}^\fd$ is
\[
\BP^\fd_{u,v} := \conv(\pi^\fd\circ\phi([u,v]))
\]
\end{definition}

%
%
%
%
%
%
%
%
%
%
%
%
%

\begin{example}
\label{ex:bru1}
Consider the Bruhat interval $[2134,3241]$, using one line notation for permutations. 
This interval consists of eight permutations
\[
[2134,3241] = \{2134,2314,2143,3124,2341,3142,3214,3241\}.
\]
For rank $\fd=(1,3)$, we have that the corresponding untwisted Bruhat polytope is
\[
\BP^{(1,3)}_{2134,3241} := \conv((1,0,1,2),(1,0,2,1),(1,1,0,2),(1,1,2,0))
\]
and the twisted variant is
\[
\tBP^{(1,3)}_{2134,3241} := \conv((1,2,1,0),(1,2,0,1),(1,1,2,0),(0,2,1,1),(1,0,2,1),(0,1,2,1))
\]

\begin{figure}[h]
	\centering
		\includegraphics[width=0.95\textwidth]{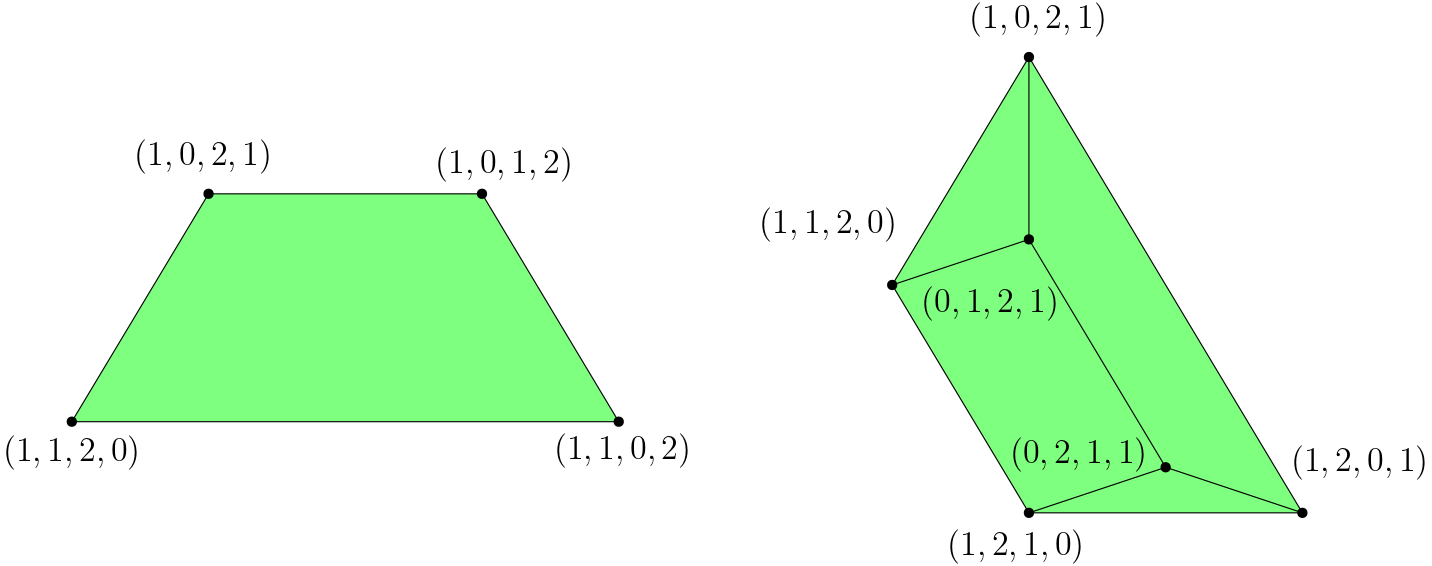}
	\caption{The Bruhart polytopes $\BP^{(1,3)}_{2134,3241}$ (left) and $\tBP^{(1,3)}_{2134,3241}$ (right).}
	\label{fig:Brupolys}
\end{figure}

\end{example}

The same argument used in the complete flag case does not work to prove that twisted and untwisted Bruhat polytopes give the same set for general $\fd$, because given $u\le v$ where $v$ is a minimal coset representative,
then $\iota(v)\le\iota(u)$ does not necessarily satisfy that $\iota(u)$ is the minimal element in its fiber under $\pi^\fd\circ\phi$.
However, we do have the following:

\begin{theorem}
\label{thm:twisted}
The set of Bruhat polytopes and the set of twisted Bruhat polytopes are the same for arbitrary rank $\fd$.
\end{theorem}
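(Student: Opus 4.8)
The plan is to show both inclusions between the two families of polytopes. The essential combinatorial obstruction is that the bijection $\iota$ from (\ref{eq:iota}) reverses the Bruhat order but does not respect the notion of ``minimal element in its $\pi^\fd$-fiber,'' since $\pi^\fd\circ\psi$ and $\pi^\fd\circ\phi$ group $\Sym_n$ into genuinely different cosets when $\fd$ is not complete. So the naive argument from the complete-flag case fails exactly at the level of coset representatives. My strategy is to bypass this by replacing ``change the interval endpoints via $\iota$'' with ``keep the interval, change the embedding, and then re-describe the resulting polytope as a Bruhat polytope of a possibly different interval of the appropriate type.''

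First I would set up the dictionary carefully. Note that $\psi(x) = x\cdot e_\fn$ and $\phi(x) = (x(1),\dots,x(n))$, and that $\phi = \psi\circ j$ for the order-reversing, order-type-preserving map $j$ of which $\iota$ is built; more precisely one has $\phi(x)\cdot e_\fd$ and $\psi(x)\cdot e_\fd$ related by composing with $\iota$ and by the symmetry $e_\fd \leftrightarrow$ (complement of $e_{\fd^*}$), i.e. by the polytope automorphism $y\mapsto (\text{const})\cdot\mathbf{1} - w_0\cdot y$ where $w_0$ is the longest element. The key point is that $\tBP^\fd_{u,v}$ and $\BP^{\fd'}_{u',v'}$ for suitable data $(\fd', u', v')$ differ only by an affine symmetry of the ambient space that permutes the hypersimplices $\Delta(d_i,n)$ and reflects them; such a symmetry sends $\Delta(\fd,n)$ to $\Delta(\fd^*,n)$ up to translation, and sends Bruhat intervals to Bruhat intervals (since $\iota$ is an order-reversing bijection of $\Sym_n$). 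Concretely, I would prove: for every $[u,v]\in\tBI(\fd,n)$ there is $[u',v']\in\BI(\fd,n)$ with $\BP^\fd_{u',v'} = \tBP^\fd_{u,v}$, and vice versa.

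The heart of the argument is choosing the right endpoints. Given $[u,v]\in\tBI(\fd,n)$ (so $v$ is a minimal coset representative for $\pi^\fd\circ\psi$), the interval $[\iota(v),\iota(u)]$ is a genuine Bruhat interval because $\iota$ reverses the order, but $\iota(u)$ need not be $\pi^\fd\circ\phi$-minimal in its fiber. However, $\pi^\fd\circ\phi$ is constant on each left coset $W_\fd \iota(u)$ for the appropriate parabolic, and the polytope $\BP^\fd$ only sees the image of the interval under $\pi^\fd\circ\phi$; so I would replace $\iota(u)$ by the minimal element $u'$ of its fiber within the interval $[\iota(v),\iota(u)]$ — and argue, using that $\tBP^\fd_{u,v}$ only depends on $\pi^\fd\circ\psi([u,v])$ and the analogous fact for $\BP^\fd$, together with Theorem~\ref{prop:faces} (faces of twisted Bruhat polytopes are twisted Bruhat polytopes) to handle the bookkeeping, that $\BP^\fd_{u',\iota(v)}$ coincides with the image we want. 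Symmetrically for the reverse inclusion, using that $\iota^{-1}=\iota$.

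The main obstacle I anticipate is precisely this last bookkeeping step: verifying that after the order-reversing reflection, the convex hull of $\pi^\fd\circ\phi$ applied to a Bruhat interval whose top element is not fiber-minimal still equals the convex hull coming from the fiber-minimal rewrite. This requires knowing that $\pi^\fd\circ\phi$ restricted to a Bruhat interval $[x,y]$ has image equal to $\pi^\fd\circ\phi([x,y'])$ where $y'$ is obtained by ``pushing $y$ down'' within its fiber — i.e. a compatibility between the Bruhat order, parabolic cosets, and the lift/projection maps. This is a known type of statement (it underlies the fact that $\tBI(\fd,n)$ indexes Richardson cells in $\Fl^\tnn(\fd,\RR^n)$, cf. Theorem~\ref{prop:richard}), and I would extract exactly the lemma I need from the poset combinatorics of parabolic quotients of $\Sym_n$: namely that every interval $[u,v]$ in $\Sym_n$ with $v$ fixed, intersected with a fixed parabolic coset, has a unique minimum, and that the projection of the interval to the quotient equals the interval between the projections. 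Granting that lemma, both inclusions follow and the theorem is proved.
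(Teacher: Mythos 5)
Your approach is genuinely different from the paper's and it has a real gap at exactly the step you flag as ``the main obstacle.''

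The paper's proof does not attempt the parabolic-quotient bookkeeping at all. Instead it appeals to the structural characterization of twisted Bruhat polytopes as the polytopes of $\tnn$ flag matroids (\Cref{prop:richard}), uses that duality of flag matroids preserves $\tnn$-ness, and then observes a commutative diagram relating $\psi$, $\phi$, inversion $x\mapsto x^{-1}$, and flag-matroid duality $\_^*$. Concretely: from $\tBP^\fd_{u,v}$ one passes to the dual matroid $\fM^*$, which is $\tnn$, hence equals $\tBP^{\fd^*}_{a,b}$ for some $[a,b]\in\tBI(\fd^*,n)$; the diagram then shows $b^{-1}$ is $\pi^\fd\circ\phi$-fiber-minimal and $P_\fM = \BP^\fd_{a^{-1},b^{-1}}$. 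You gesture at this symmetry (``$e_\fd\leftrightarrow$ complement of $e_{\fd^*}$''), but you never invoke \Cref{prop:richard} or closure of $\tnn$ flag matroids under duality, which is what lets the paper sidestep poset combinatorics entirely.

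In your route, the unresolved step is precisely the lemma you say you would ``extract.'' You need: for an arbitrary Bruhat interval $[x,y]$ (with $y$ not necessarily fiber-minimal for $\pi^\fd\circ\phi$), the convex hull of $\pi^\fd\circ\phi([x,y])$ equals $\BP^\fd_{u',v'}$ for some $[u',v']\in\BI(\fd,n)$. The statement you propose as a substitute (``the projection of the interval to the quotient equals the interval between the projections'') is not the right one and is not true in the needed generality: $\pi^\fd\circ\phi$ does not send Bruhat intervals onto intervals in the quotient poset, and replacing $\iota(u)$ by the fiber-minimal element $u'$ below it can strictly shrink the interval without an a priori guarantee that the images under $\pi^\fd\circ\phi$ agree. \Cref{prop:faces} controls faces of a fixed twisted Bruhat polytope, not re-indexing of the whole polytope across the two conventions, so it will not do the bookkeeping either. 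Until you isolate and prove the correct re-indexing lemma for parabolic quotients of Bruhat intervals, the proof is incomplete; the paper's duality argument is both shorter and avoids this issue entirely.
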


\begin{proof}

Let $[u,v]\in \tBI_{\fd,n}$ and consider the twisted Bruhat polytope $\tBP^\fd_{u,v}$.
We will construct another Bruhat interval $[u',v']\in \BI_{\fd,n}$ such that $\tBP^\fd_{u,v} =\BP^\fd_{u',v'}$.

Let $\fM$ be the flag matroid such that $P_\fM = \tBP^\fd_{u,v}$.
By \Cref{prop:richard}, $\fM$ is $\tnn$.
Therefor the dual $\fM^*$ is $\tnn$, and hence there is a Bruhat interval $[a,b] \in \tBI_{\fd^*,n}$ such that $P_{\fM^*}=\tBP^{\fd^*}_{a,b}$.

The key observation is that from (\ref{eq:iota}) we obtain the following commutative diagram:
\begin{equation}
\begin{tikzcd}
\Sym_n \arrow{r}{\psi} \arrow{d}{\_^{-1}} & \vertices(\Pi_n) \arrow{r}{\pi^{\fd^*}} \arrow{d}{\_^{*}} & \vertices(\Delta(\fd^*,n))  \arrow{d}{\_^{*}} \\%
\Sym_n \arrow{r}{\phi} & \vertices(\Pi_n) \arrow{r}{\pi^{\fd}} & \vertices(\Delta(\fd,n))
\end{tikzcd}
\label{eq:comm}
\end{equation} 

Because $b$ is the minimal element in its fiber of $\pi^{\fd^*}\circ\psi$ and $\_^{*}$ is bijective, we have that $b^{-1}$ is minimal in the fiber of $\pi^{\fd}\circ\phi$.
Hence $[a^{-1}, b^{-1}]\in \BI(\fd,n)$. 
Therefor we have that 
\begin{align*}
P_{\fM} &=  (\tBP^{\fd^*}_{a,b})^* \\
&= \conv(\pi^{\fd^*}\circ\psi([a,b]))^* \\
&= \conv(\pi^\fd\circ\phi([a^{-1},b^{-1}]))\\
&= \BP^\fd_{a^{-1},b^{-1}}
\end{align*}
is an untwisted Bruhat polytope, as we wanted.

We have now proven that every twisted Bruhat polytope is also an untwisted Bruhat polytope, but the other direction follows using the same construction since the vertical arrows in (\ref{eq:comm}) are bijections. 
\end{proof}

\begin{example}
Recall the Bruhat polytopes computed in \Cref{ex:bru1} for the interval $[2134,3241]$.
The matroid $\fM$ whose polyope $P_\fM= \BP^{(1,3)}_{2134,3241}$ has bases 
\[
\BB(\fM) = \{3, 4, 123, 124, 134\}.
\]
Its dual $\fM^*$ has bases 
\[
\BB(\fM^*) = \{2, 3, 4, 123, 124\}.
\]
A simple verification yields that for the Bruhat polytope for the interval $[2314,2431]$ satisfies $\BP^{(1,3)}_{[2314,2431} = P_{\fM^*}$.
The inverses of $[2314,2431]$ are 
\[
[3214, 4132]=\{3214, 3142, 4123, 4132\}.
\]
and indeed we have that 
\[
\BP^{(1,3)}_{2134,3241}=\tBP^{(1,3)}_{3214, 4132}.\]

Similarly the flag matroid $\fM'$ that satisfies $P_{\fM'}= \tBP^{(1,3)}_{2134,3241}$ has bases 
\[
\BB(\fM') =\{2, 3, 123, 124, 134, 234\}.
\]
The polytope of its dual is the twisted Bruhat polytope of the interval $[1243,4132]$ and by taking inverses we obtain 
\[\tBP^{(1,3)}_{2134,3241}=\BP^{(1,3)}_{1243, 2431}.
\]
\end{example}

\section{The geometry of hollow flag matroids}
\label{sec:geo}
In this section we study hollow valuated flag matroids $(\mu,\nu)\in \Dr(1,n-1;n)$.
These correspond to a point inside the tropical hyperplane in $\TP$.
The vertices of $\Delta(1,n-1;n)$ are all points with one coordinate equal to $0$, one coordinate equal to $2$ and the other $(n-2)$ coordinates equal to $1$.
However, to make notation simpler and more intuitive, we are going to work with a translation of $\Delta(1,n-1;n)$ (and all matroid polytopes of rank $(1,n-1)$) by $(-1,\dots,-1)$ so that points have only two non-zero coordinates, which are equal to $-1$ and $1$. 
We make a slight abuse of notation by also calling this polytope $\Delta(1,n-1;n)$.
This way $\Delta(1,n-1;n)$ is a centrally symmetric polytope and its matroid subdivisions are induced by the coordinate hyperplanes as we explain below. 
In \Cref{fig:subdiv} we see a finest subdivision of the cuboctahedron $\Delta(1,3;4)$ where two of the coordinate hyperplanes each simultaneously slice up $\Delta(1,3;4)$ in half across a hexagon.
Two of the polytopes are the Minkowski sum of a tetrahedron with a segment while the other two are the Minkowski sum of two triangles.

\begin{figure}[h]

	\begin{center}
		\includegraphics[width=0.45\textwidth]{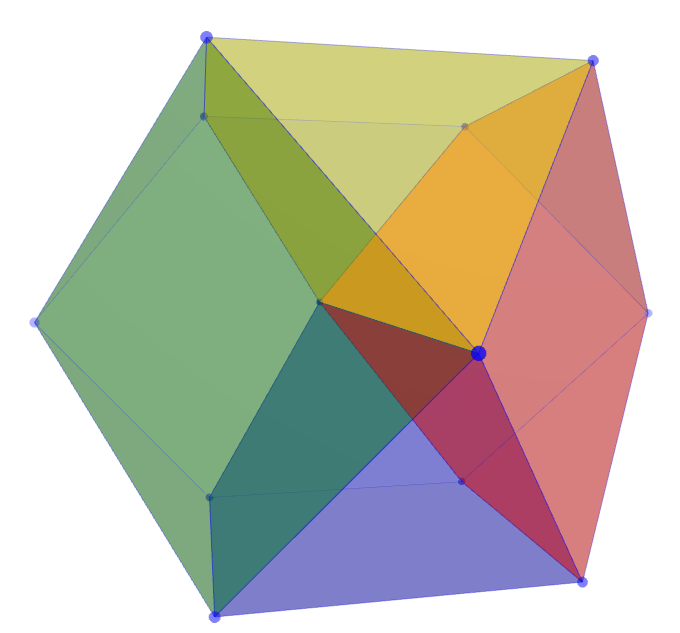}
		\caption{A finest matroid subdivision of the cuboctahedron $\Delta(1,3;4)$.}
		\label{fig:subdiv}
	\end{center}
	
\end{figure}


Every subpolytope of the simplices $\Delta(1,n)$ and $\Delta(n-1,n)$ is a matroid.
By equation (\ref{eq:troplucker}), flag matroids $\fM = (M,N)$ of rank $(1,n-1)$ are indexed by pairs of subsets $(I,J)$ of $[n]$ such that $|I\cap J|\ne 1$, where $I$ is the set of non-loops of $M$ and $J$ is the set of non-coloops of $N$. 
The polytope $P_\fM$ is full dimensional if and only if $I\cup J = [n]$ and $I\cap J \ne \emptyset$.


The most simple type of non-trivial polytopal subdivision is splitting a polytope in two by a hyperplane. 
The matroid subdivisions that arise in $\Delta(1,n-1;n)$ are relatively simple, in that they all consists of the a matroid polytope being simultaneously subdivided by $k$ hyperplanes into $2^k$ pieces for some $k\le n-2$. 

\begin{proposition}
\label{prop:holgeo}
Let $\fM = (M,N)$ be a hollow flag matroid. Then all of its matroid subdivisions are induced by a hyperplane arrangement with hyperplanes of the form $\{x_i = 0\}$.   
\end{proposition}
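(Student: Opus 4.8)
The plan is to reduce the statement to a claim about how a single regular matroid subdivision of $\Delta(1,n-1;n)$ can look, and then to identify the supporting hyperplanes of its maximal cells with coordinate hyperplanes $\{x_i = 0\}$. First I would recall (from \Cref{prop:bez}) that a hollow valuated flag matroid $(\mu,\nu)$ lifts to a valuated matroid $\rho$ on $[n+1]$ with $\mu = \rho/(n+1)$ and $\nu = \rho\setminus(n+1)$; geometrically, the mixed subdivision $\subdiv_{(\mu,\nu)}(\Delta(1,n)+\Delta(n-1,n))$ is the image of a matroid subdivision of $P_\rho \subseteq \Delta(1,n;n+1)$ under the projection forgetting the last coordinate. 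So it suffices to understand matroid subdivisions of subpolytopes of $\Delta(1,n;n+1)$, which is essentially a simplex, and then push them down. Alternatively — and this is probably cleaner to write — I would argue directly on $\Delta(1,n-1;n)$ using the centrally symmetric coordinate description introduced just before the statement: after the translation by $(-\mathbf 1)$, the vertices of $\Delta(1,n-1;n)$ are exactly the vectors $e_i - e_j$ for $i\neq j$, i.e. the root system $A_{n-1}$, and $\Delta(1,n-1;n)$ is a generalized permutahedron whose edges all point in directions $e_i-e_j$.

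The key step is the following local claim: if $\fM=(M,N)$ is a full-dimensional hollow flag matroid and $\subdiv$ is any (regular) flag-matroid subdivision of $P_\fM$, then every interior wall (codimension-$1$ cell not contained in $\partial P_\fM$) of $\subdiv$ lies on a hyperplane of the form $\{x_i = 0\}$. To see this I would use \cite[Theorem 4.4.2]{BEZ}: the cells of $\subdiv$ are themselves flag matroid polytopes of rank $(1,n-1)$, hence by the paragraph following \Cref{prop:holgeo} each is indexed by a pair $(I',J')$ with $|I'\cap J'|\neq 1$ and each is cut out by inequalities $-1 \le x_i \le 1$ together with $\sum x_i = 0$ and, for the face/loop structure, equalities $x_i = \pm 1$ on the appropriate coordinates. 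A wall between two adjacent maximal cells $P_{(I_1,J_1)}$ and $P_{(I_2,J_2)}$ must therefore be a common facet; comparing the two index pairs, the only way two full-dimensional hollow flag matroid polytopes inside $\Delta(1,n-1;n)$ can share a facet that is interior to $\Delta(1,n-1;n)$ is across a hyperplane $\{x_i=0\}$ (this is where the constraint $|I\cap J|\neq 1$ forces the combinatorics — moving a single element $i$ between "loop side" and "coloop side" corresponds exactly to the slice $x_i = 0$). Running this over all adjacent pairs of maximal cells shows the whole wall complex of $\subdiv$ is contained in $\bigcup_{i\in S}\{x_i=0\}$ for some $S\subseteq[n]$.

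Finally I would upgrade "the walls lie on coordinate hyperplanes" to "$\subdiv$ is induced by the arrangement $\{x_i = 0 : i\in S\}$", i.e. that $\subdiv$ is exactly the common refinement of the splits along those hyperplanes. Since each $\{x_i = 0\}$ either misses the interior of $P_\fM$ or splits $P_\fM$ into two flag matroid polytopes (a split, in the sense of Herrmann–Joswig), and distinct such splits are pairwise compatible on $\Delta(1,n-1;n)$ — one checks the two slicing hyperplanes meet $P_\fM$ in a common hexagonal-type subpolytope, never crossing "badly" — the refinement $\subdiv'$ they generate is a coarsening-refinement of $\subdiv$ with the same wall set, hence equals $\subdiv$. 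The main obstacle I anticipate is precisely this last compatibility/"no missing walls" bookkeeping: showing that once the walls are coordinate hyperplanes, there are no further subdivisions hiding inside a cell and no incompatibility between the chosen splits — in other words that the split decomposition of $\Delta(1,n-1;n)$ is "weakly compatible" and every flag-matroid coarsening is realized by an honest coordinate-hyperplane arrangement. I would handle it by an induction on $|S|$, peeling off one hyperplane $\{x_i=0\}$ at a time and applying the lift through $\Delta(1,n;n+1)$ (or the deletion/contraction $\mu\setminus i$, $\nu/i$, which are again hollow of smaller size) to the two halves.
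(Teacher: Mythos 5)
Your route is genuinely different from the paper's. The paper argues tropically: it observes that $L_\mu$ is a point and $L_\nu$ is a standard tropical hyperplane in $\TP$, so the dual complex $\Sigma_\fvm$ is the common refinement of the normal fans of $\Delta(1,n)$ and $\Delta(n-1,n)$ centred at $L_\mu$ and the apex of $L_\nu$. The vertices of $\Sigma_\fvm$ are then listed explicitly (one for each subset $J\subseteq I$, where $I$ is the set of indices where $\lambda_i$ does not attain the minimum), and the corresponding maximal cells are read off directly as the cells $\{x_i\le 0:i\in I\setminus J\}\cap\{x_i\ge 0:i\in J\}$. This hands you the arrangement structure for free, with no separate ``no hidden walls'' step. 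You instead argue structurally: cells are hollow flag matroid polytopes $P_{(I',J')}$, and the only facets of $P_{(I',J')}$ not already on $\partial\Delta(1,n-1;n)$ come from coordinate half-space constraints, hence interior walls lie on $\{x_i=0\}$'s; then you upgrade to ``$\subdiv$ equals the arrangement''. Your approach has the virtue of not leaning on the valuated/regular structure, but it trades the paper's one-shot computation for a bookkeeping step you yourself flag as the main obstacle.

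Two concrete issues. First, your inequality description of the cells is wrong: you write that each $P_{(I',J')}$ is cut out by $-1\le x_i\le 1$, $\sum x_i=0$, and ``equalities $x_i=\pm 1$'' for the loop/coloop structure. In fact, after the translation $\Delta(1,n-1;n)$ is the convex hull of $\{e_i-e_j\}$, which needs all the inequalities $\sum_{i\in S}x_i\le 1$ (not just the box constraints), and loops of $M$ impose $x_i\le 0$ while coloops of $N$ impose $x_i\ge 0$ (inequalities through the origin, not equalities at $\pm1$). This matters because the precise form ``$P_{(I',J')}=\Delta(1,n-1;n)\cap\{x_i\le 0:i\notin I'\}\cap\{x_i\ge 0:i\notin J'\}$'' is exactly what you need to conclude that any interior facet lies on some $\{x_i=0\}$; you want to state it correctly rather than gesture at it. Second, the final upgrade --- that walls living on coordinate hyperplanes forces $\subdiv$ to be the arrangement, i.e.\ that no cell straddles a hyperplane that serves as a wall elsewhere --- is a real gap. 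It is true, and your induction-by-splitting idea is a reasonable way to close it, but as written it is not a proof. The paper avoids the issue entirely by producing the subdivision as the common refinement of two normal fans, which is already an arrangement; if you want to stick with your route, you should spell out why a wall cannot have a free boundary in the interior of $P_\fM$ (a pure-codimension-one-complex argument).
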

\begin{proof}
Consider a valuated flag matroid $\fvm = (\mu,\nu)\in \Dr(1,n-1;n)$ over $\fM$.
Lets begin with the full support case, i.e. when $\fM$ is the uniform matroid $U_{1,n-1;n}$.

In this case the tropical linear space $L_\nu$ is a standard tropical hyperplane in $\TP$ centered at $-(\nu([n]\backslash 1),\dots, \nu([n-1]))$ and $L_\mu$ is just the point $(\mu(1),\dots, \mu(n))$.
The complex $\Sigma_{\mu}$ is the inner normal fan of $\Delta(1,n)$ centered at the point $L_\mu$ and $\Sigma_{\nu}$ is the inner normal fan of $\Delta(n-1,n)$ centered at the apex of $L_\nu$ (after taking the quotient by the lineality space $\RR(1,\dots,1)$).
In particular, the rays of $\Sigma_{\nu}$ are in direction $e_i$ while the rays of $\Sigma_{\mu}$ are in direction $-e_i$.
The facets of the mixed subdivision $S_\fvm$ correspond to vertices of $\Sigma_\fvm$, that is, intersections of cones $C_1 \in \Sigma_{\mu}$ and $C_2 \in \Sigma_{\nu}$ which consist of a single point.
So the subdivision $S_\fvm$ only depends on the cell of $L_\nu$ where $L_\mu$ is located.

The set $I\subset[n]$ of numbers $i$ for which $\lambda_i$ does not attain the minimum in (\ref{eq:lambda_inci}) is the set of rays $e_i$ spawning the cone of $L_{\nu}$ whose interior contains the point $L_{\mu}$.
For every subset of $J\subset I$ there is one vertex in $\Sigma_\fvm$, namely, $p^J$ where $p^J_i = \mu(i)$ for $i\in J$ and $p_i = -\nu([n]\backslash i)$ for $i\notin J$. 
In particular we have $2^{|I|}$ facets. 
The point $p^J$ corresponds to the flag matroid $\fM_J$ with bases
\[
\BB(\fM_J) = \{\{i\}\mid i\in (J\cup ([n]\backslash I))\} \cup \{[n]\backslash i\mid i\notin J \}.
\] 
So the corresponding matroid polytope satisfies the inequalities $x_i \le 0$ for $i\in I\backslash J$ and $x_i\ge 0$ for $i\in J$.
Taking all the $2^{|I|}$ facets together we see that the subdivision is the result of simultaneously cutting $\Delta(1,n-1;n)$ by the hyperplanes given by $\{x_i=0\}$ for $i\in I$.

If $\fvm$ does not have full support, the subdivision is still given by the simultaneously splitting with the hyperplanes $\{x_i=0\}$ for all $i$ where $\lambda_i$ does not attain the minimum but is finite. For infinite $\lambda_i$ we have that all of $P_\fM$ is on the halfplane $x_i\ge 0$ if $\nu_i =\infty$ and on the halfplane $x_i\le 0$ if $\mu_i=\infty$ (in particular it is inside the hyperplane if both Pl\"ucker coordinates are infinite).   
\end{proof}

The following example shows total non-negativity, is not determined by the subdivision of the matroid polytope.`
\begin{example}
\label{ex:main}
Consider $\nu = (0,0,0,0) \in \Dr(3,4)$,  $\mu = (2,1,0,0)\in \Dr(1,4)$ and $\mu'=(1,2,0,0)\in \Dr(1,4)$. 
We can check that both $(\mu,\nu)$ and $(\mu',\nu)$ satisfy (\ref{eq:troplucker}), so they are both in the flag Dressian $\FlDr(1,3;4)$.
Moreover, as points, both $\mu$ and $\nu$ lie in the same cone of $L_\nu$, so the corresponding subdivision of $\Delta_{1,3;4}$ is the same.
However, by \Cref{thm:hollow}, $(\mu,\nu)$ can be completed to a complete flag $(\mu,\theta,\nu) \in \TFl^{\ge0}(1,3;4)$. 
For example, we can take 
\[
\theta(ij) =\begin{cases}
			1, & \text{if }  ij=34\\
            0, & \text{otherwise.}
		 \end{cases}
\]
But $(\mu',\nu)$ does not satisfy (\ref{eq:tnnh}), as 
\[\lambda_1 =1 < \lambda_0\oplus\lambda_2 =\infty\oplus 2=2.\] 
So $(\mu',\nu)$ is not in $\TFl^{\ge 0}((1,3),4)$.
The key difference here is that there are different possible subdivisions of $\Pi_n$ that can arise from completing $(\mu,\nu)$ and $(\mu',\nu)$, despite them having the same subdivision on $\Delta(1,3;4)$.
This shows that the polyhedral geometry does not determine whether a valuated flag matroid is a valuated positroid for non-consecutive rank.

Let us see in detail what is going on with this example.
The tropical linear space $L_\nu$ consists of the 6 cones of dimension 2 spanned by 2 of the 4 coordinate axes in $\TT\PP^3$.
Let us examine what are the possible rank-2 tropical linear spaces $L_\theta$ that are inside $L_\nu$ and contain a point $(x,y,0,0)$.
\Cref{fig:13} shows tropical linear spaces of rank 2 which complete $(\mu,\nu)$ and $(\mu', \nu)$.

The tropical linear space $L_\theta$ is determined by its bounded unique bounded cell. 
Lets call $S$ that bounded cell, which is either a segment in direction $e_1+e_2$ or a point.
If $S$ is a point it must be either $(0,y)$ or $(x,0)$.
If $S$ is a segment, it can be a secant of the rectangle $R$ inside $C$ which has opposite corners $(0,0)$ and $(x,y)$ (ignoring the last two coordinates which are constant $0$ in $C$).
Otherwise, it contains $(x,y)$ or $(0,0)$ in its interior (in the latter case the bounded cell is also inside the cone spanned by $e_3$ and $e_4$).

\begin{figure}[h]
	\centering
		\includegraphics[width=0.9\textwidth]{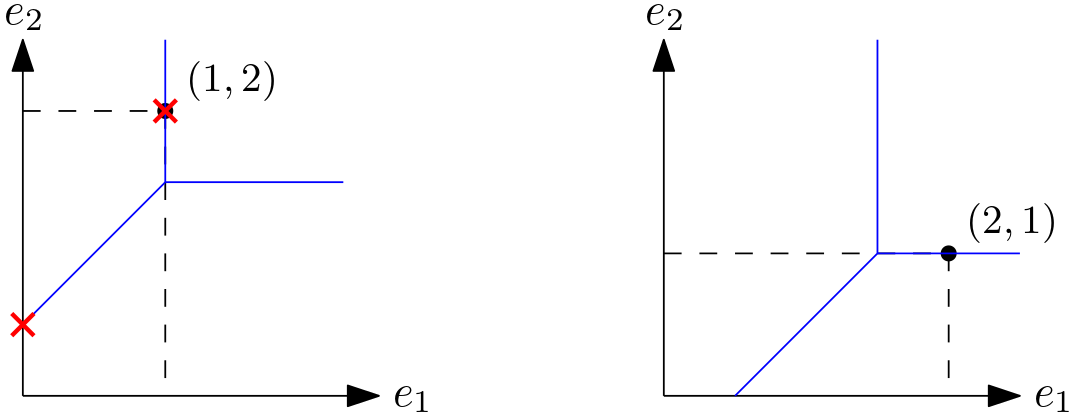}
	\caption{Left: two points (red crosses) corresponding to non-Bruhat polytopes when completing $(\mu',\nu)$. Right: A flag completing $(\mu,\nu)$ producing a Bruhat subdivision.}
	\label{fig:13}
\end{figure}

If one of the extremes $p$ of $S$ lies in the interior of the side of $R$ between $(0,0)$ and $(0,y)$, then the cell in the subdivision of $\Pi_4$ induced by $(\mu,\theta,\nu)$ given by $p$ is the flag matroid with bases $\{3,4,13,14,23,24,34,123,124,234\}$. 
This is not a Bruhat polytope, because it contains the points $(1,2,3,4)$ and $(3,2,1,4)$ but not $(2,1,3,4)$ and $(3,1,2,4)$.

If the other extreme $q$ of $S$ lies in the interior of the side of $R$ between $(x,0)$ and $(x,y)$, then the cell in the subdivision of $\Pi_4$ induced by $(\mu,\theta,\nu)$ given by $(x,y)$ is the flag matroid with bases $\{1,2,3,4,12,23,24,123,124\}$.  
This is also not a Bruhat polytope, because it contains the points $(2,3,1,4)$ and $(4,3,1,2)$ but not $(3,2,1,4)$ and $(4,2,1,3)$. 
\Cref{fig:13} (left) shows with red crosses these two points that do not correspond to Bruhat polytopes. 

If $y>x$, then the subdivision necessarily has one extreme in the interior of a vertical edge \Cref{fig:13}.
Hence the subdivision necessarily has one of these two non-Bruhat polytopes.
But if $x\le y$, it is possible to have a subdivision without these polytopes, by choosing $L_\theta$ to have its 0-dimensional cells in the horizontal sides of $R$.

The flag Dressian $\Fl\Dr((1,3),4)$ has a lineality space of dimension 3. 
If we mod out that lineality space, we get that $\Fl\Dr((1,3),4)$ is a cone over the complete graph $K_4$.
Each ray corresponds to a split over the hyperplane $\{x_i =0\}$. 
In \Cref{fig:FlDr} we see the the three different subsets of $\Fl\Dr((1,3),4)$ listed in \Cref{thm:hollow}.
The points at the end of the blue path in the left correspond to the rays $x_1=x_2\ge x_3,x_4$ and $x_3=x_4\ge x_1,x_2$.

\begin{figure}[h]
	\centering
		\includegraphics[width=\textwidth]{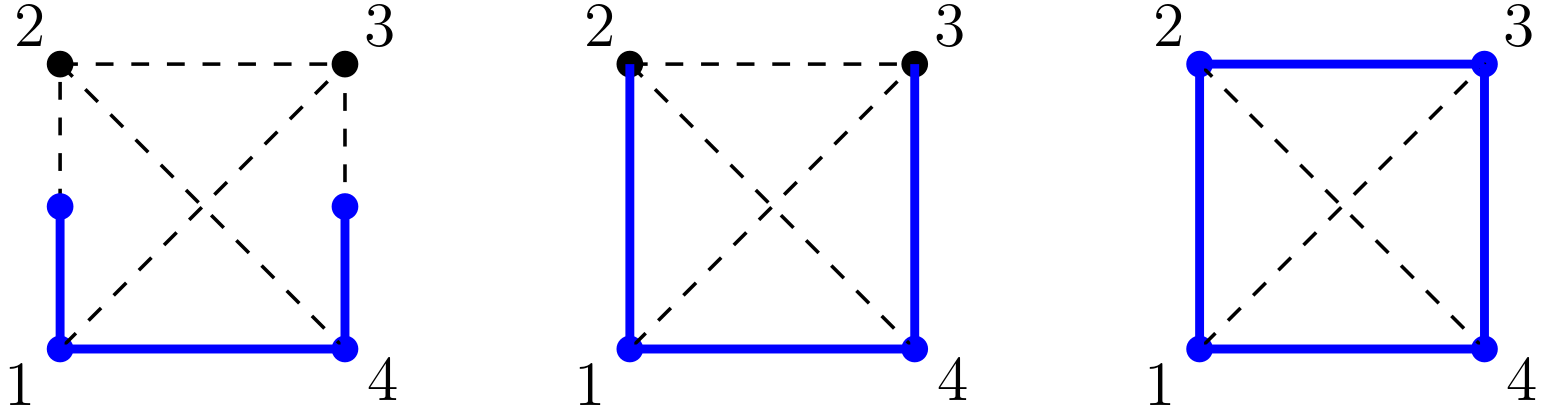}
	\caption{Left: The $\tnn$ tropical flag variety $\TFl^{\tnn}(1,3;4)$. Center: the subset of $\TFl(1,3;4)$ producing Bruhat subdivisions. Right: The non-negative flag Dressian $\FlDr^{\ge 0}(1,3;4)=\TFl^{\ge 0}(1,3;4)$.}
		\label{fig:FlDr}
\end{figure}

\end{example}
\begin{remark}
Notice that even though $\mu'$ and $\nu$ in the example above are valuated positroids and in fact $(\mu',\nu)$ is a valuated flag positroid, it can not be completed to a positive full flag. 
There are several positroids $\theta \in \Dr(2,4)$ such that $(\mu,\theta,\nu)$ is a complete flag.
However, none of them satisfy that $(\mu,\theta,\nu)$ is a flag positroid.
This example shows in particular that total positivity is not detected by the flag matroid subdivision.
\end{remark}

\section{Bruhat polytopes of hollow rank}
\label{sec:hollow_bruh}

In this section we look at Bruhat polytopes of rank $(1,n-1)$. 
We omit the superscript $(1,n-1)$ from $\BP_{u,v}^{(1,n-1)}$ and write simply $\BP_{u,v}$ as we only deal with hollow Bruhat polytopes for the next three sections.

\begin{remark}
Throughout this section, we use the untwisted convention for Bruhat polytopes, calling $v$ a minimal coset representative if it is the minimal elemnt in the fiber of $\pi^\fd\circ\phi$.
Similar proof probably exist when using the usual convention of twisted Bruhat polytopes, however we can choose the convention thanks to \Cref{thm:twisted}.
In fact, the desire to avoid translating all of this section to the twisted convention (the one used in the literature for non-complete flag varieties) was the main driving force to prove \Cref{thm:twisted}.
\end{remark}

Let us begin by taking care of polytopes in the boundary of $\Delta(1,n-1;n)$.

\begin{proposition}
Let $\fM=(M,N)$ be a hollow flag matroid over $[n]$. The flag matroid polytope $P_\fM$ is contained in the boundary of $\Delta(1,n-1;n)$ if and only if there is no element $i\in [n]$ which is neither a loop of $M$ or a coloop of $N$. 
\end{proposition}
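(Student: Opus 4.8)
The plan is to work with the translated model of $\Delta(1,n-1;n)$, in which each vertex $e_{\fB}$ for a flag of subsets $\fB = (\{a\}, [n]\backslash\{b\})$ has exactly two nonzero coordinates: a $-1$ in position $b$ and a $+1$ in position $a$ (when $a \ne b$; if $a = b$ the vertex is the origin). The polytope $\Delta(1,n-1;n)$ lives in the hyperplane $\sum x_i = 0$, and I expect its facets to be cut out precisely by the inequalities $x_i \ge -1$ (equivalently, $b \ne i$ for all vertices on the facet) and $x_i \le 1$ (equivalently, $a \ne i$ for all vertices on the facet), for $i \in [n]$. So the first step is to identify these facet-defining inequalities explicitly; this is a routine computation since $\Delta(1,n-1;n) = \Delta(1,n) + \Delta(n-1,n)$ is a Minkowski sum whose summands have well-understood facet structure, and a facet of the sum is a sum of faces of the summands.

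Next, recall from the paragraph preceding the statement that a hollow flag matroid $\fM=(M,N)$ is indexed by a pair $(I,J)$ with $I$ the set of non-loops of $M$ and $J$ the set of non-coloops of $N$, subject to $|I \cap J| \ne 1$. I would first translate the condition ``there is no $i\in[n]$ which is neither a loop of $M$ nor a coloop of $N$'' into the statement $I \cup J = [n]$. Then the claim becomes: $P_\fM \subseteq \partial\Delta(1,n-1;n)$ if and only if $I \cup J \ne [n]$.

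For the main argument, I would characterize $P_\fM$ via its vertices: $e_\fB$ is a vertex of $P_\fM$ exactly when the singleton of $\fB$ lies in $I$ (it is a non-loop of $M$) and the complement-of-a-point of $\fB$ is a basis of $N$, i.e. the omitted point lies in $J$ (it is a non-coloop of $N$). Now for the ``if'' direction: if $I \cup J \ne [n]$, pick $i \notin I \cup J$. Then $i$ is a loop of $M$, so $a \ne i$ for every vertex $e_\fB$ of $P_\fM$, giving $x_i \le 0 < 1$... wait, I need $x_i$ bounded away from the facet value. Actually $i$ being a loop of $M$ forces the $i$-th coordinate of every $e_{B_1}$ to be $0$, and $i$ being a coloop of $N$ (since $i\notin J$) forces the $i$-th coordinate of every $e_{B_2}$ to be $1$; in the translated model this means $x_i = 1$ on all of $P_\fM$, so $P_\fM$ lies in the facet $\{x_i = 1\}$, hence in the boundary. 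For the ``only if'' direction (the contrapositive): suppose every $i \in [n]$ is either a non-loop of $M$ or a non-coloop of $N$, i.e. $I \cup J = [n]$. I must produce, for each facet-defining inequality, a vertex of $P_\fM$ strictly satisfying it. Fix a coordinate $i$; I need a vertex with $x_i < 1$ and a (possibly different) vertex with $x_i > -1$. Since $i \in I \cup J$, either $i \in I$ (so there is a basis $\{i\}$ of $M$, and combining with any non-coloop of $N$ gives a vertex whose $i$-th coordinate is $+1$... hmm, that is on the wrong side). Let me reorganize: to stay off $\{x_i = 1\}$ I need a vertex where coordinate $i$ is not $+1$, which happens whenever the chosen singleton of $M$ is not $\{i\}$; such a choice exists as long as $M$ has a basis other than $\{i\}$, i.e. $|I| \ge 2$, or else $i \notin I$. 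The genuinely delicate case is when $I = \{i\}$ is a single point — then I must use $i \in I \cup J$ via $J$: if $I = \{i\}$ then $|I \cap J| \le 1$ forces (by the $|I\cap J|\ne 1$ axiom) $i \notin J$, but then $i \notin I \cup J$ contradicts our hypothesis unless... this needs care, and this interplay of the $|I\cap J|\ne 1$ constraint with the boundary condition is where I expect the real work to be. I would handle it by a short case analysis on $|I|$ and $|J|$, using that $I \cup J = [n]$ together with $|I \cap J| \ne 1$ to rule out the degenerate configurations, and in the remaining cases explicitly exhibit two vertices of $P_\fM$ straddling each hyperplane $\{x_i = \pm 1\}$, thereby showing $P_\fM$ meets the relative interior of $\Delta(1,n-1;n)$.

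\textbf{Main obstacle.} The bookkeeping in the ``only if'' direction — verifying that when $I \cup J = [n]$ one can always find vertices of $P_\fM$ strictly inside every facet — is the crux, and the constraint $|I \cap J| \ne 1$ interacts subtly with small cases (e.g. $|I| = 1$ or $|J| = n-1$); I would isolate these as separate lem-sized cases rather than trying to argue uniformly.
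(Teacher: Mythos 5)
Your translation of the statement is wrong, and this sinks the whole proposal. The condition ``there is no element $i$ which is neither a loop of $M$ nor a coloop of $N$'' unpacks to: for every $i$, $i$ is a loop of $M$ or $i$ is a coloop of $N$; in your $(I,J)$ notation this is $I\cap J=\emptyset$, not $I\cup J=[n]$. You instead parsed it as ``no element is both a loop and a coloop,'' which is the complementary condition. The two are genuinely different. For a concrete counterexample to your version: take $n=4$, $I=J=\{1,2,3\}$. Then $I\cup J\ne[4]$, so your criterion would put $P_\fM$ in the boundary, but $I\cap J=\{1,2,3\}$ has two elements, the vertices $e_1-e_2$ and $e_2-e_1$ both lie in $P_\fM$, and their midpoint is the origin, which is interior to $\Delta(1,3;4)$. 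Conversely, $I=\{1\}$, $J=\{2,3,4\}$ has $I\cup J=[4]$ (so your criterion says not in the boundary), yet $I\cap J=\emptyset$ and indeed every vertex of $P_\fM$ satisfies $x_1=1$, placing $P_\fM$ in a facet.

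Two further errors in the proposal are worth flagging because they masked the misreading. First, the facets of $\Delta(1,n-1;n)$ are not only $\{x_i=\pm1\}$: in the translated model the polytope is \emph{not} $\{x:\sum x_i=0,\ |x_i|\le 1\}$ (that larger polytope has vertices like $(1,1,-1,-1)$, which is not a flag vertex). There are additional facets of the form $\{\sum_{i\in S}x_i=1\}$ for general proper nonempty $S$, and it is one of these more general facets, namely $\{\sum_{i\in I}x_i=1\}$, that contains $P_\fM$ when $I\cap J=\emptyset$. Second, in your ``if'' direction you claim that $i$ being a loop of $M$ and a coloop of $N$ gives $x_i=1$ in the translated model; the correct value is $x_i=0+1-1=0$, and $\{x_i=0\}$ is not a facet (it is the ``equator'' through the interior), so even granting your misreading the argument would not show $P_\fM\subseteq\partial\Delta$.

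For comparison, the paper's argument is much shorter once the condition is parsed correctly: if $I\cap J=\emptyset$, every vertex $e_a-e_b$ of $P_\fM$ has $a\in I$ and $b\in J\subseteq[n]\setminus I$, so $\sum_{i\in I}x_i=1$ on all of $P_\fM$, a proper face; if $I\cap J\ne\emptyset$, the flag matroid axiom $|I\cap J|\ne 1$ forces $|I\cap J|\ge 2$, and picking $i,j\in I\cap J$ gives the origin as the midpoint of the two vertices $e_i-e_j$ and $e_j-e_i$, putting an interior point of $\Delta(1,n-1;n)$ inside $P_\fM$. Neither direction needs a case analysis or knowledge of the full facet list.
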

\begin{proof}
Let $I\subset [n]$ be the set of non-loops of $M$ and $J$ the set of non-coloops of $N$.
If $I\cap J=\emptyset$, then $P_M$ is inside the hyperplane $\left\{\sum\limits_{i\in I} x_i = -1\right\}$.
Now if $I\cap J >0$, it must be have at least two elements, $i$ and $j$, because of (\ref{eq:troplucker}).
But then the origin is in the segment between the vertices $x$ and $y$ of $P_M$, where $x_i=y_j=1$ and $x_j=y_i=-1$ and every other coordinate equal to $0$.
So in particular $P_M$ intersects the interior of $\Delta(1,n-1;n)$
\end{proof}

As a consequence of the above proposition we have that every face of $\Delta(1,n-1;n)$ is completely contained inside one of the orthants of $\RR^n$.
Then by \Cref{prop:holgeo} non of the faces of $\Delta(1,n-1;n)$ are subdivided in matroid subdivisions.
Since $\Delta(1,n-1;n)$ is the Bruhat polytope of $[\id,(n,1)]$, we have that all flag matroid polytopes in the boundary of  $\Delta(1,n-1;n)$ are Bruhat by \Cref{prop:faces}.

Now we focus on hollow flag matroid polytopes intersecting the interior of $\Delta(1,n-1;n)$, or equivalently, containing the origin
\begin{definition}
Let $\fM$ be a hollow flag matroid whose polytope intersects the interior of $\Delta(1,n-1;n)$.
The \bemph{symbol sequence} $\alpha(\fM) = (\alpha_1,\dots,\alpha_n)\in\{0,+,-,*\}^n$ of $\fM$ is given by: 
\begin{itemize}
	\item $\alpha_i= *$ if both $i$ and $[n]\backslash i$ are bases of $\fM$
	\item $\alpha_i= -$ if $i$ is a base but $[n]\backslash i$ is not
	\item $\alpha_i= +$ if $[n]\backslash i$ is a base but $i$ is not
	\item $\alpha_i= 0$ if neither $[n]\backslash i$ nor $i$ are bases.
\end{itemize}
Given a sequence $\alpha \in \{0,+,-,*\}^n$ with at least two $*$, we write $P_\alpha$ for the polytope of that unique flag matroid with symbol $\alpha$.
\end{definition}

Equivalently, $\alpha_i= 0$ if $P_\fM$ is inside the coordinate hyperplane $\{x_i=0\}$, 
$\alpha_i=+$ if $P_\fM$ is in the positive half space $x_i\ge 0$, $\alpha_i=-$ if $P_\fM$ is in the negative half space $x_i\le 0$, and $\alpha_i = *$ if $P_\fM$ is in both sides of the hyperplane.
Such symbol sequences satisfy that at least two symbols are $*$ because of (\ref{eq:troplucker}), but you can get any sequence in $\{+,-,*\}^n$ with at least two $*$.

The polytope $P_\alpha$ is the convex hull all vertices of $\Delta(1,n-1;n)$ where the $-1$ is in coordinate $i$ where $\alpha_i \in \{-,*\}$ and the $+1$ is in a coordinate $j$ where $\alpha_j \in \{+,*\}$.
The dimension of $P_\alpha$ is $n-1-m$ where $m$ is the number of $0$'s in $\alpha$.

Consider a matroid subdivision $S_\fvm$ of a hollow flag matroid polytope $P_\fM$ intersecting the interior of $\Delta(1,n-1;n)$. 
The faces of a matroid subdivision $S_\fvm$ of $\Delta(1,n-1;n)$ in the interior correspond to the symbol sequences satisfying that $\alpha_i = *$ for all $i$ where $\lambda_i$ attains the minimum and no other $i$.
So there are $3^k$ of these polytopes where $k$ is the number of finite $\lambda_i$ that do not attain the minimum.
Moreover, the $f$-vector of the subdivision restricted to these faces is the mirror of the $f$-vector of a $k$-cube.


Given a permutation $v \in \Sym_n$, the corresponding point in $\Delta(1,n-1;n)$ is the one where $-1$ is at position $v^{-1}(1)$, $1$ is at position $v^{-1}(n)$.
So the minimal elements of the fiber $\pi^d\circ \phi$ are those permutations where $\{2,\dots,n-1\}$ are in ascending order.
If $v$ is a minimal element of its fiber $\pi^d\circ \phi$ and satisfies that $v^{-1}(n) < v^{-1}(1)$, a minimal word $w$ for $v$ can be constructed by an ascending subword starting with $\tau_1$ followed by a descending subword starting on $\tau_{n-1}$.
Explicitly, 
\begin{equation}
w=(\tau_1,\dots ,\tau_{k-2},\tau_{n-1},\tau_{n-2},\dots, \tau_m,)
\label{eq:word}
\end{equation}
where $k=v^{-1}(1)$ and  $m= v^{-1}(n)$.
%
If $k= 1$ then the ascending subword is empty and if $m=n$ then the descending subword is empty. 
Given a subword $w'$ of $w$, the corresponding vertex in $\Delta(1,n-1;n)$ has the $-1$ in position $k'$ where $\tau_1\dots \tau_{k'-1}$ is the largest ascending subword of $w'$ that starts with $\tau_1$ and the $1$ is in position $m'$ where $\tau_{n-1}\dots \tau_{m'}$ is the longest descending subword that starts with $\tau_{n-1}$.


%

Given a symbol sequence $\alpha$, we say it has an \bemph{isolated} $*$ if there exists $i$ such that $\alpha_i = *$ but neither $\alpha_{i-1}$ or $\alpha_{i+1}$ are $*$ 
(in the extremal cases we do not consider $\alpha_0$ or $\alpha_{n+1}$ to be $*$).
We are now ready to prove the main result of this section. 
\Cref{ex:thmbruh} showcases the constructions used in the proof.

\begin{theorem}
\label{thm:bruhat}
Let $\fM$ be a hollow flag matroid such that $P_\fM$ contains the origin. 
Then $P_\fM$ is a Bruhat polytope if and only if $\alpha(\fM)$ has no isolated $*$.
\end{theorem}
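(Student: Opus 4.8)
The plan is to prove both directions by relating the symbol sequence $\alpha(\fM)$ to an explicit Bruhat interval whose polytope is $P_\fM$. I would work entirely in the untwisted convention (justified by \Cref{thm:twisted} and the remark of this section) and use the combinatorial description of words established right before the theorem in \eqref{eq:word}: a minimal coset representative $v$ with $v^{-1}(n) < v^{-1}(1)$ has the reduced word $w = (\tau_1,\dots,\tau_{k-2},\tau_{n-1},\tau_{n-2},\dots,\tau_m)$ with $k = v^{-1}(1)$ and $m = v^{-1}(n)$, and the subwords of $w$ are in bijection with vertices of $P_{\id,v}$ via the "longest ascending prefix / longest descending prefix" rule. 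The key point is that the vertices obtainable from subwords are exactly those whose $-1$-position $k'$ lies in $\{1,\dots,k\}$ and whose $1$-position $m'$ lies in $\{m,\dots,n\}$, with no further constraint \emph{except} that these two positions be distinct — i.e.\ a subword can realize the $-1$ at any position $\le k$ and simultaneously the $1$ at any position $\ge m$. Thus $\BP_{\id,v}$ as a set of vertices of $\Delta(1,n-1;n)$ is precisely $P_\alpha$ for the symbol sequence $\alpha$ with $\alpha_i = *$ for $m \le i \le k$ (these are the positions that can host either $-1$ or $1$), $\alpha_i = -$ for $i < m$... wait — I need to be careful about which positions are "$+$" versus "$-$". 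Let me restate: for this particular $v$, the attainable $-1$-positions are $\{1,\dots,k\}$ and the attainable $1$-positions are $\{m,\dots,n\}$, so with $m \le k$ the overlap $\{m,\dots,k\}$ gets $*$, positions $\{1,\dots,m-1\}$ get $-$, and positions $\{k+1,\dots,n\}$ get $+$. Such an $\alpha$ is a "contiguous block of $*$'s, flanked by $-$'s on the left and $+$'s on the right", which manifestly has no isolated $*$ when the block has length $\ge 2$ (and the origin-containing hypothesis forces at least two $*$'s).

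\textbf{The easy direction ($\Leftarrow$).} Assuming $\alpha(\fM)$ has no isolated $*$, I would show $P_\fM$ decomposes as a sub-sum and reduce to the building block above. The idea: the positions with symbol $*$ form a disjoint union of maximal runs, each of length $\ge 2$ (no isolated $*$); positions with symbol $0$ force $P_\fM$ into the coordinate hyperplane $\{x_i = 0\}$, which by \Cref{prop:holgeo}-type reasoning just restricts the ground set; positions with $+$ or $-$ are "frozen". I would pick a permutation $v$ built by concatenating, for each maximal $*$-run, a word of the shape \eqref{eq:word} localized to that run — essentially routing the "$-1$ can travel left, $1$ can travel right" freedom to exactly the $*$-positions — together with a fixed choice of where $-1$ and $1$ start so that they land among $*$'s and the $0/+/-$ pattern comes out right. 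Then the subword-to-vertex correspondence yields that the vertices of $\BP_{\id,v}$ are exactly the vertices of $P_\alpha = P_\fM$. The one subtlety is handling the case $v^{-1}(n) > v^{-1}(1)$ (ascending rather than descending), which by the symmetry of $\Delta(1,n-1;n)$ under $x \mapsto -x$ (central symmetry swapping the roles of $-1$ and $1$, equivalently the roles of $+$ and $-$ in $\alpha$) reduces to the case already treated.

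\textbf{The hard direction ($\Rightarrow$).} This is where I expect the main obstacle. Suppose $P_\fM$ is a Bruhat polytope, say $P_\fM = \BP_{u,v}$ for some $u \le v$ with $v$ a minimal coset representative; I must show $\alpha(\fM)$ has no isolated $*$. The difficulty is that the interval $[u,v]$ is general — not of the form $[\id,v]$ — so I cannot directly read off the vertices from a single word's subwords. My plan would be: first argue (using \Cref{prop:faces}, that faces of Bruhat polytopes are Bruhat, together with \Cref{prop:holgeo} which says all hollow matroid subdivisions are cut by coordinate hyperplanes) that an isolated $*$ at position $i$ would let me pass to a face of $P_\fM$ — namely intersect with a suitable coordinate hyperplane among the $*$-positions adjacent-but-one to $i$ — producing a smaller hollow Bruhat polytope whose symbol sequence still has the isolated $*$ at $i$ but has exactly two $*$'s total (at $i$ and one other position $j$, non-adjacent to $i$). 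So it suffices to rule out the case where $\alpha$ has exactly two $*$'s, at non-adjacent positions $i$ and $j$: then $P_\fM$ is a segment (a $1$-dimensional flag matroid polytope), namely the edge of $\Delta(1,n-1;n)$ from $e_i - e_j$ to $e_j - e_i$ (up to the $0$-coordinates). I would then show directly that this segment is \emph{not} a Bruhat polytope: a Bruhat polytope $\BP_{u,v}$ that is $1$-dimensional corresponds to a covering relation $u \lessdot v$ in Bruhat order, equivalently $v = u \tau$ for a transposition $\tau = (a\,b)$, and its two vertices $\phi(u), \phi(v)$ differ by swapping the values at positions $a, b$; under the projection $\pi^{(1,n-1)}$ this becomes an edge of $\Delta(1,n-1;n)$ only when the swap moves either the value $1$ or the value $n$, and in that case one checks the edge connects $e_p - e_q$ to $e_p - e_{q'}$ (sharing a $-1$-position) or to $e_{q'} - e_q$ type — in any event, the resulting edge of $\Delta(1,n-1;n)$ always has its two $*$-positions being the two positions where $1$ and $n$ can sit, which are \emph{adjacent in value-order but not in position-order}... hmm, I realize the cleanest formulation is: an edge of $\Delta(1,n-1;n)$ arising as a Bruhat polytope must be an edge where the $-1$ is fixed and the $1$ moves between two positions, or vice versa — equivalently the two $*$-symbols sit at positions that are consecutive among the positions holding values $1$ and $n$, which forces them to be adjacent in $[n]$. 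Contradiction with $i,j$ non-adjacent. I would verify this edge classification by a short direct computation with \eqref{eq:word} applied to the rank-$(1,n-1)$ interval $[u, u\tau]$, reducing $u$ to a minimal coset representative and checking the handful of cases for which transposition is applied. This last classification step is the technical heart and the place most likely to need care in handling boundary cases and the ascending/descending dichotomy.
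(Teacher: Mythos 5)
Your proposal departs from the paper's argument in both directions, and unfortunately each departure contains a genuine gap.

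\textbf{Forward direction: the reduction to a two-$*$ segment is not legitimate.} You want to intersect $P_\fM$ with coordinate hyperplanes $\{x_l = 0\}$ at various $l$ so as to kill all $*$'s except the isolated one at $i$ and one other non-adjacent $*$ at $j$, obtaining a segment. But $\{x_l = 0\}$ is a \emph{supporting} hyperplane of $P_\fM$ only when $\alpha_l \in \{0,+,-\}$; when $\alpha_l = *$ the polytope has vertices with $x_l = 1$ and vertices with $x_l = -1$, so the hyperplane $\{x_l = 0\}$ cuts through the interior and the intersection is a slice, not a face. Thus \Cref{prop:faces} does not let you turn a surplus $*$ into a $0$, and your segment $S_{ij}$ is a diagonal of $P_\fM$ rather than a face. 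The paper's proof does pass to a face — the one given by $\{x_l = 0\}$ for all $l$ with $\alpha_l \neq *$ — but this face retains \emph{all} of the $*$'s, including the isolated one, and the paper then argues directly by Coxeter combinatorics on the reduced word~(\ref{eq:word}) that this face cannot be $\BP_{u,v}$ for any interval. Your idea of classifying one-dimensional Bruhat polytopes in $\Delta(1,n-1;n)$ is appealing (and the classification you sketch looks right), but there is no way to route the general case through it via faces.

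\textbf{Backward direction: $\BP_{\id,v}$ does not suffice, and the actual $u$ is the hard part.} The interval $[\id, v]$ always contains the identity, and $\pi^\fd(\phi(\id))$ is the vertex with $-1$ at position $1$ and $1$ at position $n$; so $\BP_{\id,v}$ can only ever realize symbol sequences of the very restricted form $(-,\dots,-,*,\dots,*,+,\dots,+)$, with a single contiguous $*$-block flanked by a $-$-block on the left and a $+$-block on the right. A sequence with no isolated $*$ can have several $*$-runs with $+$'s and $-$'s interleaved between them, and those require a nontrivial bottom element $u$. You gesture at "concatenating words per $*$-run" but the vertex set of $P_\alpha$ is not a product over runs (the $-1$ can sit in one run while the $1$ sits in a different run, or in a $+/-$ position), so the concatenation heuristic does not yield the right vertex set, and no construction of $u$ is offered. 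The paper's proof spends the bulk of its effort on exactly this: an explicit rule producing a subword $w'$ (hence $u$) from $\alpha$, followed by a careful case analysis verifying $\BP_{u,v} = P_\fM$. That construction, and the verification that it realizes precisely the vertices of $P_\alpha$ and no others, is the technical heart of the theorem and is missing from your proposal.
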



\begin{proof}

First, lets suppose that if a sequence $\alpha$ has an isolated $*$ and then $P_\alpha$ is not a Bruhat polytope.
Consider the face $F$ of $P_\alpha$ given by $x_i = 0$ for every $i$ such that $\alpha_i \ne *$. 
We show that this $F$ is not a Bruhat polytope, which implies that $P_\alpha$ is not Bruhat by \Cref{prop:richard}.  

We proceed by contradiction and suppose there are permutations $u \le v$ such that $F = \BP_{u,v}$. 
Let $w$ be the reduced word of $v$ as in (\ref{eq:word}) and consider $w'$ a subword of $w$ for $u$.
Let that isolated $*$ be $\alpha_i$.
Notice that we have that 
\[
v^{-1}(n) = m \le i \le v^{-1}(1) = k-1.
\] 

Lets suppose for now that $i$ is neither $1$ or $n$.
Since none of the vertices of $F$ have a $-1$ at the $i-1$ coordinate, any subword of $w$ for a permutation in $[u,v]$ with an ascending subword $\tau_1\dots \tau_{i-2}$ must have a $\tau_{i-1}$ afterwards. 
This shows that there must be a $\tau_{i-1}$ in $w'$.
Similarly, since none of the verticees of $\BP_{u,v}$ have a $1$ at the $i+1$ coordinate, there must be a $\tau_{i}$ in $w'$. 

Suppose that $\tau_{i-1}$ appears in $w'$ before $\tau_i$, so that $\tau_{i-1}\tau_i \le u$. 
In order to be able to get a vertex with $-1$ at coordinate $i$, we need to be able to get a subword $w''$ of $w$ with an ascending subword that stops at $\tau_{i-1}$. 
Then $w''$ needs to have a $\tau_{i-1}$ before a $\tau_i$ and another $\tau_{i-1}$ one after it. 
Moreover, $w''$ must have $\tau_{i-1}\tau_{i}\tau_{i-2}\tau_i$ as a subword.
However this is not a subword of $w$ which is a contradiction (if $i=2$ already the existence of $\tau_{i-1}\tau_i$ as a subword of $w'$ make it impossible to have a $-1$ in the second coordinate).

Now suppose that $\tau_i\tau_{i-1} \le u$. 
The argument is similar: to have a vertex with $1$ at coordinate $i$, we need a subword $w''$ of $w$ with a descending subword stopping at $\tau_i$.
But then this implies that $\tau_i \tau_{i-1} \tau_{i+1}\tau_i$ is subword of $w''$ but again this is not a subword of $w$. 

For the case $i=1$, to be able to have vertices of $P_M$ with $1$ at the first coordinate we need $w$ to have $\tau_n\dots \tau_1$ as a subword.
But there can not be vertices with $1$ at the second coordinate, so we can not allow descending subwords stopping at $\tau_2$.
Then we must have that $\tau_1$ is in $w'$. 
But then this rules out the possibility of having a $-1$ in the first coordinate, which contradicts that $\alpha_1 = *$.
A similar argument takes care of the case $i=n$.


Now to prove the other direction, we assume that $\alpha = \alpha(\fM)$ has no isolated $*$ and construct $u,v$ such that $\BP_{u,v} = P_\fM$. 
By \Cref{prop:richard}, it is enough to consider full dimensional polytopes, that is, $\alpha \in \{+,-,*\}^n$.
The choice of $v$ is straightforward, it is given by the word 
\[
w = (\tau_1,\dots ,\tau_{k-2},\tau_{n-1},\tau_{n-2},\dots ,\tau_m)
\]
where $k$ is the largest number such that $\alpha_k\in\{-,*\}$ and $m$ is the smallest number such that $\alpha_m \in \{+,*\}$.
We call $\tau_1,\dots ,\tau_{k-2}$ the ascending part of $w$ and $\tau_{n-1},\tau_{n-2},\dots ,\tau_m$ the descending part of $w$.
However, the choice of $u$ is more elaborate.


\begin{enumerate}
	\item First write in ascending order all $\tau_i$ for each $i$ such that 
\begin{itemize}
	\item $\alpha_{i+1} = +$ except when $\alpha_j = +$ for all $j\ge i+1$ or when $\alpha_j = -$ for all $j\le i$.
	\item $(\alpha_i,\alpha_{i+1}) = (+,*)$.
\end{itemize}
We call this the ascending part of $w'$.
\item Then write in descending order all $\tau_i$ for each $i$ such that 
\begin{itemize}
	\item $\alpha_{i+1} = -$ except when $\alpha_j = -$ for all $j< i$.
	\item $(\alpha_i,\alpha_{i+1}) = (-,*)$ except when $\alpha_j = -$ for all $j\le i$.
\end{itemize}
We call this the descending part of $w'$.
\end{enumerate}

It is straightforward to verify that the resulting word $w'$ is a subword of $w$; the fact that we exclude the cases where there are only $+$ after $\alpha_i$ in the ascending part and when there are only $-$ before $\alpha_i$ in the descending part ensures we never get out of the bounds given by $w$.
We choose $u$ to be the permutation given by $w'$.

Let us show that $\BP_{u,v}$ indeed gives us $P_\fM$.
First we show that all vertices in $P_\fM$ are in $\BP_{u,v}$.
Take a vertex $x$ of $P$ where $x_i=-1$ and $x_j=1$.
We will construct a permutation in $[u,v]$ that gives us $x$ with the following procedure:

\begin{enumerate}
	\item Start with $w'$.
	\item Fill all gaps in the ascending part until $\tau_{i-2}$ (so that the first $i-2$ letter are $\tau_{1}\dots\tau_{i-2}$).
	\item Fill all gaps in the descending part until $\tau_{j}$.
	\item If $j<i$ and $(\alpha_{i-1},\alpha_i) = (+,*)$, remove $\tau_{i-1}$ from the ascending part.
	\item If $(\alpha_{j-1},\alpha_j) = (-,*)$, remove $\tau_{j-1}$ from the descending part, except when $\alpha_l = -$ for all $l< j$.
	\item If $i<j$ $(\alpha_{j-1},\alpha_j) = (-,*)$, add $\tau_{j-1}$ to the ascending part, except when $\alpha_l = -$ for all $l< j$.
	\item If $i<j$ and $(\alpha_{i-1},\alpha_i) = (*,*)$, add $\tau_{i-1}$ to the descending part.
	\item If $i<j$ and $\alpha_l= -$, for all $l<i$, add $\tau_{i-1}$ to the ascending part.
\end{enumerate}
We call $w''$ the resulting word and $\sigma$ the corresponding permutation. 


Let us verify that $\sigma \in [u,v]$.
First we check $\sigma \le v$.
Filling gaps in steps 2 and 3 does not add letters to $w''$ that are not in $w$.
When adding the letter $\tau_{j-1}$ to the ascending part in step 6 we have that $\alpha_{j+1}=*$ so $k-2\ge j$.
When adding the letter $\tau_{i-1}$ to the descending part in step 7 we have $\alpha_{i-1}= *$ so $m\le i-1$. 
When adding the letter $\tau_{i-1}$ to the ascending part in step 8 we know that in $\alpha$ there are $*$ eventually so $k-2\ge i$. 
So all letters added are within the bounds given by $w$ and therefor $w''$ is a subword of $w$ and $\sigma\le v$.

Now lets check that $u\le \sigma$.
If $\tau_{i-1}$ was removed in step 4, then $(\alpha_i,\alpha_{i+1}) = (*,*)$ so $\tau_i$ is not a subword of $w'$.
As $\tau_{i-1}$ commutes with every $\tau_l$ for $l>i$, we can take $w'$ and move the $\tau_{i-1}$ in the ascending part after all $\tau_l$ for $l>i$ so that it is sitting in the corresponding place in the descending part without changing that $u$ is the resulting permutation.
Notice that $w''$ has a $\tau_{i-1}$ in the descending part because $j<i$ for this case.
Similarly, if $\tau_{j-1}$ was removed in step 5, the $(\alpha_j,\alpha_{j+1}) = (*,*)$ and there is no $\tau_j$ in $w'$ so we can move $\tau_{j-1}$ before all $\tau_l$ with $l>j$. 
We have that $\tau_{j-1}$ is in the ascending part of $w''$ because either $j>i$ and we added it since step 1, or $i<j$ and we added it in step 6.
This way we obtain a word for $u$ which is a subword of $w$ and $u\le \sigma$. 
Notice that here we are already using the fact that there are no isolated $*$.


Now we need to verify that $\sigma$ satisfies that $\sigma(i) = 1$ and  $\sigma(j) = n$.
To see that $\sigma(i) = 1$, we show that the largest ascending subword of $w''$ stops at $\tau_{i-1}$
By step 2, there is an initial ascending subword in $w''$ at least until $\tau_{i-2}$.
We have the following cases:
\begin{itemize}
	\item If $j<i$, then there is a $\tau_{i-1}$ in the descending part of $w''$ and this can not be followed by a $\tau_i$.
				The only problem that we could have is that there could already be a $\tau_{i-1}$ in the ascending part of $w'$.
				This can only happen if $\tau_{i-1}$ is already in the ascending part of $w'$. 
				But since $x_i = -1$, this is only possible if $(\alpha_{i-1},\alpha_i) = (+,*)$.
				However we removed this $\tau_{i-1}$ from $w''$ in step 4.
	\item If $i<j$ and $(\alpha_{i-1},\alpha_i) = (+,*)$, then there is a $\tau_{i-1}$ in the ascending part of $w''$.
				As there are no isolated $*$, we have that $(\alpha_i,\alpha_{i+1}) =(*,*)$, so there is no $\tau_i$ in $w''$.
	\item If $i<j$ and $(\alpha_{i-1},\alpha_i) = (*,*)$, then there are no $\tau_{i-1}$ in $w'$ but in step 7 there was a $\tau_{i-1}$ added to the descending part of $w''$.
				Since it is in the descending part it can not be followed by a $\tau_i$.
	\item If $i<j$ and $\alpha_l = -$ for all $l<i$, then $\tau_{i-1}$ was added to the ascending part in step 8. 
				In this case there is no $\tau_i$ in the ascending part of $w'$, because the only possibility would be for $(\alpha_i,\alpha_{i+1}) =(-,+)$ but this case is excluded when $\alpha_l = -$ for all $l<i$.
				Also because $\alpha_l = -$ for all $l<i$, $\tau_i$ was not added in step 6.
	\item The remaining cases are $i<j$, not all $\alpha_l = -$ for $l<i$ and either $\alpha_i = -$ or $(\alpha_{i-1},\alpha_i) = (-,*)$.
				Here we always have a $\tau_{i-1}$ in the descending part of $w'$ but not in the ascending part.
				This remains true in $w''$ so the ascending subword stops at $\tau_{i-1}$.
\end{itemize}

Fortunately, it is way easier to verify that $\sigma(j) = n$.
By construction we have a descending chain until $\tau_j$.  
Since $\alpha_j \in \{+,*\}$, the only case $\tau_{j-1}$ is in the descending part of $w'$ is when $(\alpha_{j-1},\alpha_j) = (-,*)$ and not all $\alpha_l = -$ for $l< j$, which is why it is removed from $w''$ in step 5.

Finally, we need to show that no other points appear in $\BP(u,v)$.
So we must show that for $\sigma \in [u,v]$ it can not happen that $\sigma(i)=1$ if $\alpha_i = +$ or $\sigma(i)=n$ if $\alpha_i = -$.
Let $w''$ be a word for $\sigma$.
Suppose $\alpha_i = +$ and the largest ascending subword stops at $\tau_{i-1}$, so that $\sigma(i)=1$. 
Notice that $\alpha_i = +$ implies that $w'$ has a $\tau_i$ as subword at the ascending or at the descending part.
So $w''$ also has $\tau_i$ as subword.
If $\alpha_l= -$ for all $l< i$, then there are no $\tau_l$ in the descending part of $w$ for $l<i$.
Therefor the ascending subword of $w''$ must all be in its ascending part and hence before the $\tau_i$.
But then we would have that $\sigma^{-1}(1)> i$, which is a contradiction. 
So we suppose that not all $\alpha_l= -$ for $l< i$.
Then there is a $\tau_{i-1}$ in the ascending part of $w'$ and hence $\tau_{i-1}\tau_i$ is a subword of $w''$. 
But then the final $\tau_{i-1}$ in the ascending subword of $w''$ must be after $\tau_i$ and therefor we have that $\tau_{i-1}\tau_i\tau_{i-2}\tau_i\le \sigma$.
However $\tau_{i-1}\tau_i\tau_{i-2}\tau_i \not \le v$, which is a contradiction.

Similarly, if $\alpha_i = -$ then for $\tau_{i}$ to exist in the descending part we have that not all $\alpha_l = -$ for $l<i $. 
So we have that $\tau_i \tau_{i-1}$ is a subword of $w''$ and if the largest descending subword ends at $\tau_i$ then $\tau_i\tau_{i-1}\tau_{i+1}\tau_i\le \sigma$ but $\tau_i\tau_{i-1}\tau_{i+1}\tau_i\not\le\sigma$.

\end{proof}

\begin{example}
\label{ex:thmbruh}
Consider the flag matroid $\fM =(M,N)$ over $[5]$ where $\BB(M) = \{1,3,4,5\}$ and $\BB(N)=\{1235,1245,1345\}$.
The symbol sequence of $\fM$ is $\alpha=(-,+,*,*,-)$. 
We get $w= \tau_1\tau_2\tau_3\tau_4\tau_3\tau_2$ and $w'= \tau_2\tau_4$, so $P_\fM=\BP_{u,v}$ for $u=(13254)$ and $v=(25341)$.
To verify that the point $x=(0,1,-1,0,0) \in \BP_{u,v}$ we follow the 8 steps to obtain $w''=\tau_1\tau_4\tau_3\tau_2$.
The resulting permutation is $\sigma = (25134)$ which indeed satisfies that $\sigma\in[u,v]$ and $x_{\sigma(1)}=-1$ and $x_{\sigma(5)}=1$.  
\end{example}

We get the second part of \Cref{thm:hollow} as corollary of \Cref{thm:bruhat}: 
\begin{theorem}
\label{thm:bruh}
A valuated flag matroid of hollow rank induces a Bruhat subdivision if and only if its lambda-values satisfy (\ref{eq:bruh}). 
\end{theorem}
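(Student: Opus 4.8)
The goal is to deduce \Cref{thm:bruh} from the combinatorial characterization in \Cref{thm:bruhat}. Recall that a valuated flag matroid $\fvm = (\mu,\nu)$ of hollow rank induces a mixed subdivision $\subdiv_\fvm(\Delta(1,n-1;n))$, and by \Cref{prop:holgeo} its cells (intersecting the interior) are exactly the polytopes $P_\alpha$ for those symbol sequences $\alpha$ such that $\alpha_i = *$ precisely when $\lambda_i$ attains the minimum of (\ref{eq:lambda_inci}) among the \emph{finite} values $\lambda_1,\dots,\lambda_n$ not covered by some splitting hyperplane. More precisely, the interior facets are indexed by subsets $J \subseteq I$, where $I$ is the set of indices $i$ with $\lambda_i$ finite and not attaining the minimum; for such a subset, $\alpha_i = *$ for $i \in J$ together with the indices attaining the minimum, $\alpha_i = +$ for $i \in I \setminus J$, etc. The plan is to translate the condition ``no cell of $\subdiv_\fvm$ has an isolated $*$'' into the inequality (\ref{eq:bruh}), and to do so separately for the interior cells and to check that boundary cells are automatically Bruhat.

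First I would observe that boundary cells of $\Delta(1,n-1;n)$ are always Bruhat: this was already established in the paragraph preceding \Cref{thm:bruhat}, since $\Delta(1,n-1;n) = \BP_{\id,(n,1)}$ and its faces are not subdivided (by \Cref{prop:holgeo}), so they are Bruhat by \Cref{prop:faces}. Hence it suffices to analyze the cells meeting the interior, i.e.\ containing the origin, where \Cref{thm:bruhat} applies directly. Second, I would unwind what an ``isolated $*$'' means in terms of the $\lambda$-values. Let $m = \lambda_1 \oplus \cdots \oplus \lambda_n$ be the global minimum. A cell of the subdivision corresponds to choosing, for each index $i$ with $\lambda_i > m$ and $\lambda_i$ finite, whether to put a $*$, a $+$, or a $-$; the index $i$ \emph{must} be a $*$ whenever $\lambda_i = m$. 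So an isolated $*$ can appear in \emph{some} cell precisely when there is an index $i$ with $\lambda_i = m$ (forced $*$) but $\lambda_{i-1} > m$ and $\lambda_{i+1} > m$ — because then we may choose the cell that assigns $+$ or $-$ (not $*$) to both neighbours, making the $*$ at position $i$ isolated. (One must also handle indices that are $*$ in \emph{every} cell because $\lambda_i = m$ with a neighbour also forced, but then that $*$ is never isolated; and the boundary indices $\lambda_0 = \lambda_{n+1} = \infty$ mean position $1$ or $n$ can carry an isolated $*$ exactly when its unique interior neighbour exceeds $m$.) Therefore: \emph{every} cell of $\subdiv_\fvm$ has no isolated $*$ if and only if for every $i$ with $\lambda_i = m = \bigoplus_j \lambda_j$ we have $\lambda_{i-1} = m$ or $\lambda_{i+1} = m$, which is exactly the statement that $\lambda_i = \lambda_{i-1} \oplus \lambda_{i+1}$ for every such $i$ — i.e.\ condition (\ref{eq:bruh}).

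The one subtlety, and the step I expect to require the most care, is the quantifier ``for every $i$ such that $\lambda_i = \lambda_1 \oplus \cdots \oplus \lambda_j$'' in (\ref{eq:bruh}): I read $\lambda_1 \oplus \cdots \oplus \lambda_j$ as the global minimum $m$ (the notation in \Cref{thm:hollow} uses $j$ loosely for the full range), so the condition is imposed exactly on the indices achieving the minimum, which is what the argument above produces. I would double-check this reading against \Cref{ex:main} and \Cref{fig:FlDr} (the ``center'' picture), confirming that the rays surviving in the Bruhat locus are precisely those where the minimum-achieving set of $\lambda$'s is an ``interval-closed'' subset in the cyclic-free sense. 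With that pinned down, the proof is just: (i) reduce to interior cells via the boundary remark; (ii) apply \Cref{thm:bruhat} cell-by-cell; (iii) translate ``no isolated $*$ in any cell'' to (\ref{eq:bruh}) by the neighbour-choice argument above; and (iv) conclude, using that $\subdiv_\fvm$ consists of Bruhat polytopes iff each of its maximal cells (equivalently each of its cells, since faces of Bruhat polytopes are Bruhat by \Cref{prop:faces}) is Bruhat.
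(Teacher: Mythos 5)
Your approach is essentially the same as the paper's: reduce to interior cells via \Cref{prop:faces}, apply \Cref{thm:bruhat} cell by cell, and translate the ``no isolated $*$'' condition into (\ref{eq:bruh}). You also read the quantifier in (\ref{eq:bruh}) correctly (the $j$ is a typo for $n$, so the condition is on the indices achieving the global minimum $m$).

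However, your description of the cell structure is wrong and, as written, would undermine the translation step. You claim that an interior cell of $\subdiv_\fvm$ ``corresponds to choosing, for each index $i$ with $\lambda_i > m$ and $\lambda_i$ finite, whether to put a $*$, a $+$, or a $-$.'' That is not what \Cref{prop:holgeo} gives: every interior cell of the subdivision has $\alpha_i = *$ \emph{exactly} at the indices where $\lambda_i$ attains the minimum, and $\alpha_i \in \{0,+,-\}$ at the finite non-minimizing indices (the $0$ arising when the cell lies on the splitting hyperplane $\{x_i=0\}$). Stars are never a ``choice'' at a non-minimizing index. This matters, because under your (incorrect) model a cell could carry a freely chosen $*$ at a non-minimizing index whose neighbours you set to $+$ or $-$, producing an isolated $*$ that has nothing to do with (\ref{eq:bruh}); your biconditional ``isolated $*$ appears in some cell precisely when some $\lambda_i = m$ has both neighbours $>m$'' would then simply be false. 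The argument survives because the premise is wrong: since the set of $*$'s is the \emph{same} fixed set (the minimizing indices) in every interior cell, ``no cell has an isolated $*$'' collapses to ``no minimizing index is isolated among minimizing indices,'' which is (\ref{eq:bruh}). This is precisely the observation the paper's one-line proof relies on. So: right strategy, right conclusion, but the intermediate description of the cells needs to be corrected to $\{0,+,-\}$ (not $\{*,+,-\}$) at the non-minimizing indices before the translation to (\ref{eq:bruh}) is actually valid.
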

\begin{proof}
Let $\fvm$ be a valuated flag matroid over $\fM$. 
We assume $P_\fM$ contains the origin as otherwise $S_\fvm$ is trivially a Bruhat subdivision and equation (\ref{eq:bruh}) is satisfied trivially as all $\lambda_i$ are infinite.
Since the polytopes we get in $S_\fvm$ have $\alpha_i= *$ if and only if $\lambda_i$ achieve the minimum, then if $\lambda_i$ achieves the minimum by \Cref{thm:bruhat} we have that either $\lambda_{i-1}$ or $\lambda_{i+1}$ achieve the minimum as well.
This is exactly what (\ref{eq:bruh}) says. 
\end{proof}

\section{Flag gammoids}
\label{sec:gammoid}
We now introduce a language which is useful to describe the paramtrization of $\tnn$ flag varieties given by Boretsky \cite{boretsky}.
This is based on the class of gammoids introduced by Mason in \cite{MasonGammoids} which was generalized in \cite{FO} to valuated matroids.
We adapt this construction to (valuated) flag matroids, an call them (valuated) flag gammoids.
For details on the classical theory of gammoids and the related construction of transversal matroids we refer to \cite{BrualdiWhite}.


Consider a directed graph $\Gamma = (V,E)$. 
We identify $n$ distinct vertices with $[n]$, so that $[n]\subseteq V$. 
Let $\fd=(d_1,\dots,d_s)$ be a rank vector and consider a flag $S_1\subset \dots \subset S_s$ of subsets of $V$ such that $|A_i|=d_i$ for every $i$. 
Let us call $\fS=(S_1,\dots,S_s)$ a flag of sinks, even though we do not require the vertices in $S_i$ to be sinks.
Given sets $I$ and $J$ of the same cardinality, a \bemph{linking }from $I$ onto $J$ is a collection of $|I|$ vertex-disjoint paths where each path starts at a vertex in $I$ and ends in a vertex in $J$.
The collection of subsets $I\subseteq [n]$ for which there is a linking onto $S_i$ form the bases of rank $d_i$ matroid (supposing this collection is not empty).
A matroid that can be constructed this way is called a \bemph{gammoid} \cite{MasonGammoids}. 
 
Now suppose each edge $e$ has a real weight $w_e$, that is, we have a function $w: E\to \RR$. 
For technical reasons, we assume there are no cycles with negative weight and there is always a subset of $[n]$ with a linking onto $S_i$ for every $i$.
The weight of a linking $\Phi$ is the sum of the weights of all edges in $\Phi$.
Given a subset $I\subset [n]$ of size $d_i$, let $\mu_i(I)$ be the minimum weight over all linkings from $I$ onto $S_i$.
Then $\mu_i$ is valuated matroid of rank $d_i$ known as \bemph{valuated gammoid} \cite{FO}.

\begin{theorem}
\label{thm:gammoid}
The valuated matroids $(\mu_1,\dots,\mu_s)$ defined as above form a valuated flag matroid $\mu(\Gamma,w,\fS)$ of rank $\fd$.
\end{theorem}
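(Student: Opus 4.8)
The goal is to show that the tuple $(\mu_1,\dots,\mu_s)$ of valuated gammoids arising from a single weighted digraph $\Gamma$ with a flag of sink-sets $\fS = (S_1,\dots,S_s)$ is a valuated flag matroid, i.e.\ that consecutive pairs $(\mu_i,\mu_{i+1})$ form quotients. The plan is to reduce to the case $s=2$ and $d_2 = d_1+1$, since a chain of valuated matroids is a valuated flag matroid iff every consecutive pair is a quotient, and a pair $(\mu,\nu)$ of ranks $d$ and $d'$ with $d<d'$ is a quotient iff every intermediate-rank ``concordant'' valuated matroid exists; more efficiently, by \Cref{prop:bez} it suffices to exhibit for each consecutive pair a single valuated matroid $\theta$ on $[n+1]$ with $\mu_i = \theta/(n+1)$ and $\mu_{i+1} = \theta\backslash(n+1)$. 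So the first step is: fix $i$, and reduce to constructing such a $\theta$.

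\textbf{The construction.} Given the pair $(S_i, S_{i+1})$ with $|S_{i+1}| = |S_i| + r$, I would build an auxiliary weighted digraph $\Gamma'$ on vertex set $V \cup \{*\}$ (with $* = n+1$ playing the role of the new ground-set element) that simultaneously encodes both $\mu_i$ and $\mu_{i+1}$. Concretely, one wants a single sink-set $\widetilde S$ of size $d_{i+1}$ in $\Gamma'$ such that linkings onto $\widetilde S$ that \emph{use} $*$ correspond to linkings onto $S_i$ in $\Gamma$ (giving the contraction $\theta/* = \mu_i$), while linkings that \emph{avoid} $*$ correspond to linkings onto $S_{i+1}$ (giving the deletion $\theta\backslash* = \mu_{i+1}$). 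When $r=1$ this is clean: add the vertex $*$, a zero-weight edge from each vertex of $S_{i+1}\setminus S_i$ into $*$ (or route $*$ appropriately), and set $\widetilde S = S_i \cup \{*\}$; a linking from $I\cup\{*\}$ onto $S_i\cup\{*\}$ with the path from $*$ trivial recovers a linking from $I$ onto $S_i$, and a linking from $I$ onto $S_i\cup\{*\}$ not starting at $*$ forces the $*$-path to come from $S_{i+1}\setminus S_i$, recovering a linking onto $S_{i+1}$. For general $r>1$ one iterates, interpolating a chain $S_i = T_0 \subset T_1 \subset \dots \subset T_r = S_{i+1}$ of sink-sets and applying the $r=1$ case repeatedly.

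\textbf{Verifying the identities.} Once $\Gamma'$ and $\widetilde S$ are fixed, let $\theta$ be the valuated gammoid of $(\Gamma', w', \widetilde S)$ — this \emph{is} a valuated matroid by the single-constituent case of the cited result in \cite{FO}, so I may take that for granted. It then remains to check the two equalities $\theta/* = \mu_i$ and $\theta\backslash* = \mu_{i+1}$ at the level of the valuation functions: for a basis $B$ with $*\in B$, compute $\theta(B) = \min$ weight of a linking from $B$ onto $\widetilde S$, and show the optimal such linking always routes $*$ along the zero-weight designated edges, so that $\theta(B)$ equals $\mu_i(B\setminus *)$; dually for $*\notin B$. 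This is the bookkeeping heart of the argument and where one must be careful about the no-negative-cycle hypothesis (to ensure optimal linkings use simple paths and the designated $*$-edges are never ``shortcut'' or circumvented in a cheaper way) and about the nonemptiness hypotheses on $\fS$ (to ensure $\theta$ has at least one basis of each relevant kind). I expect \textbf{this verification step to be the main obstacle}: proving that an optimal linking onto $\widetilde S$ decomposes in the claimed way requires an exchange/augmenting-path argument on vertex-disjoint path systems, analogous to (and presumably modeled on) the proof that valuated gammoids are valuated matroids in \cite{FO}. A clean alternative worth trying, which may sidestep the combinatorics, is to invoke the representability route: valuated gammoids over a field of Puiseux series are realized by explicit matrices (Lindström–Gessel–Viennot-style determinantal formulas with a generic variable $t^{w_e}$ on each edge), so one could instead exhibit the flag of linear spaces directly — the contraction/deletion of a linear space along a coordinate is again a linear space and these operations automatically produce a flag, yielding the flag-matroid quotient property for free; one then only needs that the valuated gammoid genuinely equals the valuated matroid of that linear realization, which again is the content of \cite{FO}.
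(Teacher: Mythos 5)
Your plan matches the paper's proof almost exactly: reduce to consecutive rank by interpolating the flag of sinks into a chain where each step has size one, handle each rank-one jump by adding a single new vertex playing the role of $n+1$, exhibit a valuated gammoid $\theta$ on the enlarged ground set with $\theta/(n+1)=\mu_i$ and $\theta\backslash(n+1)=\mu_{i+1}$, and invoke \Cref{prop:bez}. Your auxiliary construction (fresh vertex $*$ with a single zero-weight edge from the unique vertex $v\in S_{i+1}\setminus S_i$ into $*$, and sink set $\widetilde S=S_i\cup\{*\}$) is the arrow-reversed variant of what the paper does (either relabel $v$ itself as $n+1$ if it was not already a ground element, or create a fresh vertex $n+1$ with the single edge $n+1\to v$, keeping the sink set $S_{i+1}$). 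Both versions are correct.

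The one place you go wrong is your assessment that the verification $\theta/*=\mu_i$ and $\theta\backslash*=\mu_{i+1}$ is the main obstacle and would need an exchange or augmenting-path argument \`a la \cite{FO}. It does not, and this is precisely the design of the construction: the vertex $*$ has exactly one incident edge, so its role in \emph{every} linking onto $\widetilde S$ --- optimal or not --- is forced, not merely preferred. If $*\in B$, the path from $*$ must be trivial, since $*$ has no outgoing edges; deleting it gives a weight-preserving bijection with linkings of $B\setminus\{*\}$ onto $S_i$. If $*\notin B$, the path ending at $*$ must arrive by the unique edge $v\to *$; cropping that last edge gives a weight-preserving bijection with linkings of $B$ onto $S_{i+1}$. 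The minima therefore match term by term, and the paper dispatches this in a single sentence. You should also note that the no-negative-cycle and nonemptiness hypotheses are inherited unchanged by the auxiliary graph, so the single-constituent theorem of \cite{FO} applies directly to $\theta$ (as you do). The LGV/representability route you sketch as a fallback is a legitimate alternative but is considerably heavier and is not what the paper uses.
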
 


\begin{proof}
First suppose $\fd=(1,\dots,s)$. 
We only need to prove that $\mu_i, \mu_{i+1}$ form a valuated matroid quotient.
To see this, consider giving the label $n+1$ to the unique vertex in $S_{i+1}\backslash S_i$. 
In the case that this vertex already had a label $k$, just create a new vertex labeled $n+1$ with a single edge to $k$ of weight $0$.
Let us call $\nu$ for the valuated matroid of rank $d_{i+1}$ over $[n+1]$ given by the linkings to $S_{i+1}$.  

Now, subsets of $I\subset [n+1]$ of size $d_{i+1}$ containing $n+1$ have a liking to $S_{i+1}$ if and only if $I\backslash(n+1)$ has a linking to $S_i$ with the same weight.
So we have that $\nu \backslash (n+1) = \mu_{i+1}$ and  $\nu / (n+1) = \mu_i$.
By \Cref{prop:bez} we have that $(\mu_i, \mu_{i+1})$ is a flag valuated matroid as desired.

Now for arbitrary rank $\fd$, extend the flag of sinks $\fS$ to a flag of sinks $(S_1',\dots, S_m')$ of rank $(1,\dots,m)$ where $m=|S_s|$.
This can be done by choosing a linear ordering to $S_s$ where if $x\in S_i$ and $y\in S_j\backslash S_i$ then $x<y$ and having $S_i'$ be the $i$ smallest elements of $S_s$.
Now we are in the previous case so we have a valuated flag matroid $\fvm'=(\mu_1',\dots,\mu_m')$.
We can then obtain $\fvm = (\mu_1,\dots,\mu_s)$ from $\fvm'$ by dropping all constituents whose rank is not in $\fd$, so $\fvm$ is a valuated flag matroid.
%
\end{proof}

\begin{definition}
\label{def:gammoid}
A (valuated) flag matroid $\mu=\mu(\Gamma,w,\fS)$ which arises from a directed graph $\Gamma$ with edge weights $w$ and a flag of sinks $\fS$ is called a \bemph{(valuated) flag gammoid}. 
\end{definition}

\begin{proposition}
\label{prop:gamma_trans}
Valuated flag gammoids are closed under translation.
\end{proposition}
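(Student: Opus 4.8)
The plan is to realize a translated flag gammoid as a flag gammoid again, by encoding the translation vector into the edge weights of a modified graph. Let $\mu=\mu(\Gamma,w,\fS)$ be a valuated flag gammoid of rank $\fd=(d_1,\dots,d_s)$, with $\Gamma=(V,E)$, distinguished vertices $[n]\subseteq V$, flag of sinks $\fS=(S_1,\dots,S_s)$, and let $x\in\RR^n$ be a translation vector. I want to produce $(\Gamma',w',\fS)$ with $\mu(\Gamma',w',\fS)=\mu+x$. Recall from \Cref{def:lambda}-era notation that $(\mu_i+x)(I)=\mu_i(I)+e_I\cdot x=\mu_i(I)+\sum_{j\in I}x_j$.

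First I would handle the case where every vertex in $[n]$ is a source (has no incoming edges) of $\Gamma$; this is harmless because one may always prepend a new source vertex labelled $j$ with a single weight-$0$ edge into the old vertex $j$ for each $j\in[n]$, which changes neither the gammoid nor the flag-gammoid structure (every linking from $I$ onto $S_i$ extends uniquely through these pendant edges with no change of weight). In this reduced situation, form $\Gamma'$ from $\Gamma$ by adding $x_j$ to the weight of \emph{every} edge leaving the source vertex $j$, for each $j\in[n]$; all other edge weights are unchanged, and $\fS$ is kept. Since each source $j\in[n]$ has only outgoing edges, any path starting at $j$ uses exactly one edge out of $j$, so in a linking $\Phi$ from $I$ onto $S_i$ the total weight changes by exactly $\sum_{j\in I}x_j$, uniformly over all such linkings $\Phi$. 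Hence the minimum weight shifts by $\sum_{j\in I}x_j$, giving $\mu'_i(I)=\mu_i(I)+e_I\cdot x=(\mu_i+x)(I)$ for every $i$ and every $I$, i.e. $\mu(\Gamma',w',\fS)=\mu+x$. One should also check the technical standing hypotheses of \Cref{thm:gammoid} survive: there are still linkings onto each $S_i$ (the underlying directed graph and $\fS$ are unchanged, only weights moved), and no negative-weight cycles are created because the sources $j\in[n]$ lie on no directed cycle at all, so the cycle weights are untouched. Then \Cref{thm:gammoid} applies to $(\Gamma',w',\fS)$ and certifies that $\mu+x=\mu(\Gamma',w',\fS)$ is a valuated flag gammoid.

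The main obstacle is the bookkeeping around vertices of $[n]$ that are not sources in the original $\Gamma$ (for instance if some element of $[n]$ also appears among the sinks, or has incoming edges): distributing the weight $x_j$ onto the out-edges of such a vertex would double-count for paths that pass \emph{through} $j$ rather than starting there, or for the trivial length-zero linking when $j\in S_i$. The source-reduction step above is exactly what removes this difficulty, so the argument is clean once that normalization is in place; alternatively one could put the weight $x_j$ on the fresh pendant edge into $j$ directly, which is perhaps the slickest formulation and makes the ``each path from $I$ uses each such edge exactly once'' statement immediate. Either way the proof is short and the only thing requiring a line of care is the verification that the standing no-negative-cycle and nonempty-linking hypotheses of \Cref{thm:gammoid} are preserved.
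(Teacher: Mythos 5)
Your proof is correct, and it takes a route that is equivalent in spirit but different in mechanism from the paper's. The paper implements the single-coordinate translation directly on the original graph via the gauge action: add $r$ to every edge \emph{out of} $i$ and subtract $r$ from every edge \emph{into} $i$. The $+r$ and $-r$ cancel along any path that merely passes through $i$, so only paths starting at $i$ change weight; the one wrinkle (a vertex of $[n]$ that lies in $\fS$) is handled by splicing in a fresh sink vertex after it. You instead normalize first, prepending a weight-$0$ pendant source for each $j\in[n]$, after which the ground-set vertices are genuine sources, are disjoint from $\fS$, and sit on no cycle; translating then amounts to loading $x_j$ onto the out-edge(s) of $j$, and the bookkeeping — through-paths, trivial paths at sinks, negative cycles — becomes automatic. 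Both arguments are short; the paper's gauge version avoids enlarging the vertex set and is the standard move in this tropical/graph setting, while your pendant-source normalization makes the ``each path through $[n]$ picks up exactly one modified edge'' claim immediate and sidesteps the separate treatment of vertices in $\fS$. Either form establishes the proposition.
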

\begin{proof}
Let $\fvm=\fvm(\Gamma, w,\fS)$ be valuated flag gammoid. 
Suppose you want to translate the flag of tropical linear spaces by an amount $r\in \RR$ in the $i$-th coordinate.
First suppose that $i$ is not a in the flag of sinks.
This corresponds to increasing $\mu_I$ by $r$ for every $I$ containing $i$.
To do that, consider $w'$ given by
\[
w'(e) := \begin{cases}
w(e) +r & \text{if } e \text{ is an outgoing edge from } i\\ 
w(e) -r & \text{if } e \text{ is an ingoing edge to } i\\
w(e)  & \text{otherwise. }
\end{cases}
\]
This is sometimes known as the \emph{gauge action} on $w$.
Equipping $\Gamma$ with these new edge weights has the effect that every path starting from $i$ is increased by $r$, but the weight of every other path remains the same, even if they use the vertex $i$ somewhere that is not the source.
The resulting flag matroid $\fvm'(\Gamma,w',\fS)$ is the translation of $\fvm$ we wanted.

If $i$ is in the the flag of sinks $\fS$, then just create a new vertex $v$ and an edge from $i$ to $v$ of weight $0$, replace $i$ by $v$ in $\fS$ and then apply the gauge transformation described above.
\end{proof}

When the vertices of $\Gamma$ are exactly $[n]$ with no other additional vertices, we can all the resulting (valuated) flag matroid a \bemph{(valuated) strict flag gammoid}, extending from the definitions in \cite{MasonGammoids}. Their duals can be called \bemph{(valuated) transversal flag matroid} \cite[Proposition 7.3]{FO}.
It is easy to prove that every polytope in the subdivision in a valuated (strict) flag gammoid is a (strict) gammoid using a suitable gauge transformation as described in \Cref{prop:gamma_trans} and removing edges with positive weight.
However the converse is far from trivial.
It was proven in \cite[Theorem 6.20]{FO} that a regular subdivision consisting of strict gammoids corresponds to a valuated strict gammoid.
We ask if we can generalize this in the flag setting, and moreover, to the non-strict setting:

\begin{question}
If we have a flag valuated matroid $\fvm$ such that every polytope in the subdivision is a (strict) gammoid, does it follow that $\fvm$ is a valuated (strict) flag gammoid? 
\end{question}
Notice that this question is similar in the local vs global spirit of some of the positivity questions we deal with in this paper. 
\Cref{ex:main} shows that a subdivision of $\tnn$-flag matroids does not imply that the valuated matroid is $\tnn$.
Whether a subdivision of flag positroids is a valuated flag positroid is still unknown to us; this is part of \Cref{conj:main}. \Cref{thm:arb}  provides a weaker version of this statement.  

Just like valuated positroids can be parametrized using valuated gammoids \cite{KodamaWilliams:2015}, we can use valuated flag gammoids to parametrize flag matroids in $\tnn$ tropical flag varieties. 
The following is an adaptation of a construction given in \cite{boretsky}, which itself is an interpretation of the parametrization of Richardson varieties given in \cite{MR}.
Our construction is an ad-hoc simplification for hollow flags using the language of flag gammoids and using \Cref{thm:bruh} to use as a starting point the flag matroid instead of the Bruhat interval.

Let $\fM= (M,N)$ be a matroid of rank $(1,n-1)$ such that $P_\fM$ is a Bruhat polytope.
Let $a_1<\dots<a_\alpha$ be the non-loops of $M$.
By the second part of \Cref{thm:bruh}, if there is an element $i$ which is neither a loop of $M$ nor a coloop in $N$, then either $i-1$ or $i+1$ satisfy the same.
So the set $C$, consistinig of the elements which are not loops of $M$ nor coloops of $N$ consists of consecutive segments of size at least $2$.
Let $b_1<\dots<b_\beta$ be the set of non-coloops of $N$ except for the last element of each consecutive segment in $C$.

We construct a weighted directed graph which we call $\Gamma_\fM$ in the following way.
The set of vertices is $[n]\sqcup[\overline{n}]$ where $[\overline{n}] =\{\overline{1},\dots,\overline{n}\}$. 
There are three kinds of edges:
\begin{itemize}
	\item For every $i$, there is an edge from $i$ to $\overline{i}$ with weight $w(i,\overline{i})=0$.
	\item For every $i<\alpha$, there is an edge from $a_i$ to $a_{i+1}$ with arbitrary weight  $w(a_i,a_{i+1})$.
	\item For every $i<\beta$, There is an edge from $b_i$ to $\overline{b_{i+1}}$ with arbitrary weight $w(b_i,\overline{b_{i+1}})$.
\end{itemize}
Let $S_1=\{\overline{a_\alpha}\}$ and $S_2=[\overline{n}]\backslash \{\overline{b_1}\}$ and consider the valuated flag matroid $\fvm(\Gamma_\fM, w,\fS)$ for the flag of sources $\fS=(S_1,S_2)$.

\begin{example}
\label{ex:graph}
Consider the flag matroid $\fM=(M,N)$ on $[8]$ where the loops of $M$ are $3$ and $8$ and the only coloop of $N$ is $7$. 
This matroid satisfies (\ref{eq:bruh}) so $P_\fM$ is a Bruhat polytope. 
The graph $\Gamma_\fM$ is pictured in \Cref{fig:graph}.

\begin{figure}[h]
	\centering
		\includegraphics[width=0.7\textwidth]{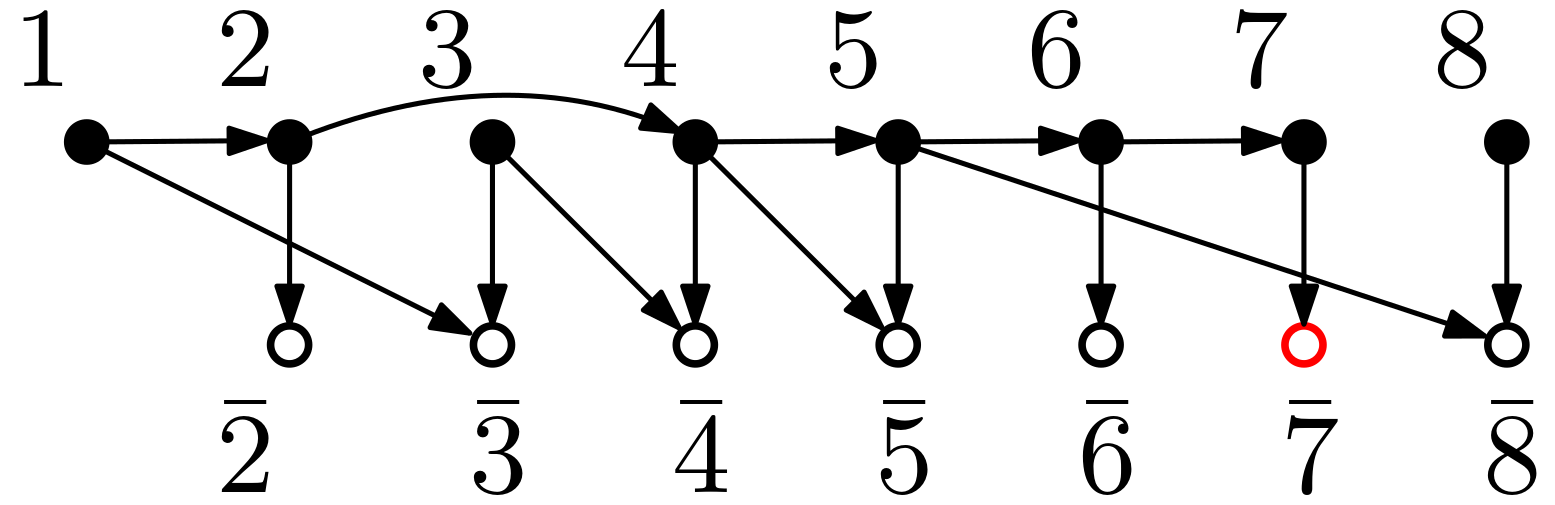}

	\caption{The graph $\Gamma_\fM$ of \Cref{ex:graph}. The sink $\bar{7}$ is in red as it is the only element in $S_1$. The element $\bar{1}$ is removed as it is the only element of $[\bar{n}]$ not in $S_2$, so it is irrelevant.}
	\label{fig:graph}
\end{figure}

\end{example}

\begin{remark}
The construction in \cite{boretsky} has several convention differences to this one.
First, the elements of the matroid are placed as sinks as and look at linkings onto them from marked sources. 
However in the classical construction of gammoids \cite{BrualdiWhite,MasonGammoids} the arrows go the other way, looking instead at likings from a subset of elements of a matroid onto marked sinks, which is what we use here.
Moreover, the twisted convention of Bruhat polytopes is used instead of the untwisted.
From the proof of \Cref{thm:twisted} we have that to change conventions one takes a Bruhat interval for the dual and then takes inverse of the permutations in the interval.
Notice that a word for the inverse of a permutation can be obtained by reversing the order of the letters.
So together the change of these conventions amounts to doing the same construction in \cite[Definition 3.11]{boretsky} for $[u',v']$ where $P_{\fM^*}= \BP_{u',v}$ while reversing all arrows and placing the arrows from right to left in the figures therein.
\end{remark}

\begin{proposition}
A valuated flag matroid $\fvm$ with support $\fM$ is in $\TFl^\tnn(1,n-1;n)$ if and only if $\fvm=\mu(\Gamma_\fM,w,\fS)$ for some edge weights $w$.  
\label{prop:param} 
\end{proposition}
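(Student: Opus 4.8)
The statement is an iff, so I would prove both implications, with the ``if'' direction (any $\fvm = \fvm(\Gamma_\fM, w, \fS)$ lies in $\TFl^\tnn(1,n-1;n)$) being the one that follows most directly from the machinery already set up. For that direction, the plan is to show that valuated flag gammoids of this particular shape are $\tnn$. The key observation is that the graph $\Gamma_\fM$ is a ``planar'' network of the type that appears in the Lindström–Gessel–Viennot / Marsh–Rietsch parametrization: because the edges only go $a_i \to a_{i+1}$ (increasing), $b_i \to \overline{b_{i+1}}$ (increasing), and $i \to \overline{i}$, the minimum-weight linkings onto $S_1$ and $S_2$ are computed by non-crossing path systems, so the tropical Plücker coordinates $\mu_i(I)$ are realized by the valuations of the minors of an honest totally non-negative matrix over $\C_{\ge 0}$ obtained by replacing each edge weight $w_e$ by $t^{w_e}$ (times a positive real). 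Concretely, I would invoke the Marsh–Rietsch parametrization of the Richardson cell $\R_{u,v}$ as reinterpreted by Boretsky \cite{boretsky} and explained in the remark preceding this proposition: the graph $\Gamma_\fM$ is (up to the convention changes spelled out in that remark) exactly Boretsky's graph for the interval $[u,v]$ with $\BP_{u,v} = P_\fM$, whose existence is guaranteed by \Cref{thm:bruh} since $P_\fM$ is a Bruhat polytope. Hence $\fvm(\Gamma_\fM,w,\fS)$ is, by construction, the tropicalization of a point in the Richardson variety $\R_{u,v} \subseteq \Fl^\tnn(1,n-1;\RR^n)$, so it lies in $\TFl^\tnn(1,n-1;n)$.

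For the ``only if'' direction, suppose $\fvm \in \TFl^\tnn(1,n-1;n)$ with support $\fM$. First, $\TFl^\tnn$ is the image under valuation of $\Fl^\tnn(1,n-1;\RR^n)$, and by \Cref{prop:richard} (and the Richardson decomposition) the matroid polytope $P_\fM$ must be a twisted --- hence by \Cref{thm:twisted}, untwisted --- Bruhat polytope, so $\Gamma_\fM$ is well-defined and $\fM$ satisfies \eqref{eq:bruh}. Then I would use the fact that $\Fl^\tnn$ stratifies into Richardson cells $\R_{u,v}$ (\cite{rietsch1998total}, \cite{KodamaWilliams:2015}) each of which is parametrized by Marsh–Rietsch / Boretsky via edge weights on precisely the graph $\Gamma_\fM$ associated to that cell's Bruhat interval. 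Since tropicalization commutes with taking Plücker coordinates (\cite{SS}), the valuation of any point of $\R_{u,v}$ defined over $\C_{\ge 0}$ is exactly $\fvm(\Gamma_\fM, w, \fS)$ for the corresponding tropicalized edge weights $w = \val(\text{edge parameters})$, and conversely every real $w$ arises this way. Because the value group of $\C_{\ge 0}$ is all of $\RR$, every edge-weight vector $w \in \RR^E$ is attained, so the image of $\R_{u,v} \cap \Fl^{\ge 0}$ under valuation is precisely $\{\fvm(\Gamma_\fM,w,\fS) : w \in \RR^E\}$. This gives the desired equality.

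The main obstacle I anticipate is \emph{not} the abstract structure of the argument but the careful bookkeeping of conventions: the Marsh–Rietsch parametrization is stated in terms of Bruhat intervals and the twisted convention, Boretsky's graph reverses the arrows and uses sinks for matroid elements, and the present section has switched to the untwisted convention and to linkings \emph{onto} a flag of sinks. The remark before the proposition already lays out the dictionary (take the Bruhat interval of the dual, invert the permutations --- which reverses words --- and reverse all arrows), so the proof should cite that remark and then verify that the combinatorially-defined graph $\Gamma_\fM$ in terms of the loops of $M$ and coloops of $N$ genuinely matches Boretsky's graph for the interval $[u',v']$ with $P_{\fM^*} = \BP_{u',v'}$ under this dictionary. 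A secondary, smaller point is checking that the technical hypotheses of \Cref{thm:gammoid} (no negative-weight cycles, existence of a linking onto each $S_i$) hold here: $\Gamma_\fM$ is acyclic by construction so there are no cycles at all, and surjectivity of the linking condition onto $S_1, S_2$ follows from $P_\fM$ being full-support-compatible, i.e. from the $a_i$ and $b_i$ being defined exactly from the non-loops/non-coloops of $\fM$. Once those verifications are in place, the proposition follows by directly translating the known parametrization of Richardson cells into the gammoid language.
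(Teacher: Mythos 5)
Your plan matches the paper's proof: both reduce the proposition to identifying $\Gamma_\fM$ with Boretsky's graph for the appropriate Bruhat interval (under the convention dictionary spelled out in the remark preceding the proposition) and then invoking \cite[Theorem 3.13]{boretsky}. The combinatorial verification you defer to your ``main obstacle'' paragraph — that the sets $A$ and $B$ built from the non-loops of $M$ and non-coloops of $N$ agree with the ascending and descending parts of the reduced word of $v$ for the interval $[u,v]$ with $P_{\fM^*}=\BP_{u,v}$, using \Cref{thm:bruh} to see that the consecutive segments of $C$ have length at least two — is exactly what occupies the body of the paper's proof, and it is the only substantive step.
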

\begin{proof}
If $P_{\fM^*}=\BP_{u,v}$ for some Bruhat interval $[u,v]$ of rank $(1,n-1)$, then $v$ has a word which consists of an ascending subword $\tau_1\tau_2\dots$ followed by a descending subword $\tau_{n-1}\tau_{n-2}\dots$. 
Following the construction in \cite[Definition 3.11]{boretsky}, we have that the ascending subword corresponds to some ascending path $a_1 \rightarrow \dots \rightarrow a_\alpha$ in the vertices labeled $[n]$ and the descending part corresponds to edges $(b_i,\overline{b_{i+1}})$ for some sequence $b_1< \dots< b_\beta$.
We need to show that the sets $A=\{a_1,\dots,a_\alpha\}$ and $B=\{b_1,\dots b_\beta\}$ are the same as the ones constructed above for $\Gamma_\fM$.

First, it is clear that the only vertices with a path to $S_1=\{\overline{a_\alpha}\}$ are those in $A$. 
So necessarily $A$ must be the set of bases i.e. non-loops of $M$.  
Then for every element in $b_i$ we can obtain a linking from $[n]\backslash b_i$ onto $S_2$ by taking the edges $(b_{j-1},\overline{b_j})$ for $j\le i$ and $(j,\overline{j})$ for all other remaining sinks in $S_2$. 
However, if $b_i$ is also in $A$, say $b_i = a_j$, then we can also add the edge $(a_j,a_{j+1})$ and obtain a linking from $[n]\backslash a_{j+1}$ onto $S_2$, so $b_i$ must not be a coloop in $N$.
By \Cref{thm:bruh}, there is always at least two consecutive elements which are both non-loops of $M$ and non-coloops of $N$.
Hence we can and must omit the last element in every consecutive segment of non-coloops of $N$ which are also not loops of $M$.
By taking a minimal $u$ we can assume every other non-coloop of $N$ is in $B$, showing that indeed $\Gamma_\fM$ is the end result.
The statement follows by \cite[Theorem 3.13]{boretsky}.
\end{proof}

\section{Positivity for hollow flag matroids}
\label{sec:hollow+}
In this section we will prove part 1. (\Cref{thm:tnnh}) and part 3. (\Cref{thm:posh}) of \Cref{thm:hollow}.
We begin with the following lemma:

\begin{lemma}
\label{lemma:posequ}
A sequence of real numbers $\lambda_1,\dots, \lambda_n$ satisfies (\ref{eq:tnnh}) if and only if there exists $x_1,\dots,x_{n-1}$ such that $\lambda_i = x_i\oplus x_{i-1}$ for every $i$ (where $x_0=x_n=\infty$). 
\end{lemma}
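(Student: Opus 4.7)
The plan is to establish each direction separately. The ``if'' direction is a direct computation, while the ``only if'' direction requires an explicit construction of the $x_i$.

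For the ``if'' direction, I would assume $\lambda_i = x_{i-1}\oplus x_i$ for all $i\in[n]$ and simply compute
\[
\lambda_{i-1}\oplus\lambda_{i+1} = x_{i-2}\oplus x_{i-1}\oplus x_i\oplus x_{i+1} \;\le\; x_{i-1}\oplus x_i = \lambda_i,
\]
since the tropical sum (minimum) over a superset is smaller. The boundary indices $i\in\{1,n\}$ work identically using the convention $x_0=x_n=\infty$.

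For the ``only if'' direction, I propose the explicit construction
\[
x_i := \max(\lambda_i,\lambda_{i+1}) \qquad\text{for } i=1,\dots,n-1.
\]
This is the pointwise minimal choice of $x_i$ satisfying the two inequalities $x_i\ge\lambda_i$ and $x_i\ge\lambda_{i+1}$ that any valid lift must satisfy, so it is the natural candidate. To verify $\min(x_{i-1},x_i)=\lambda_i$, note that the inequality $\ge$ is automatic because $\lambda_i$ appears inside both maxes defining $x_{i-1}$ and $x_i$. For the reverse inequality, observe that $x_{i-1}=\lambda_i$ precisely when $\lambda_{i-1}\le\lambda_i$, and $x_i=\lambda_i$ precisely when $\lambda_{i+1}\le\lambda_i$; hypothesis (\ref{eq:tnnh}) asserts $\lambda_i\ge\lambda_{i-1}\oplus\lambda_{i+1}$, which is exactly the disjunction that at least one of these holds. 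The extremal cases $i\in\{1,n\}$ of (\ref{eq:tnnh}) read $\lambda_1\ge\lambda_2$ and $\lambda_n\ge\lambda_{n-1}$, which make $x_1=\lambda_1$ and $x_{n-1}=\lambda_n$ as required by the convention $x_0=x_n=\infty$.

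There is no substantial obstacle in this proof: the content of the lemma is really the observation that (\ref{eq:tnnh}) is exactly the condition needed for the minimal lift $x_i=\max(\lambda_i,\lambda_{i+1})$ to reproduce $\lambda$ under $\oplus$. The only bookkeeping subtlety is keeping the infinity conventions $\lambda_0=\lambda_{n+1}=\infty$ and $x_0=x_n=\infty$ consistent with the index range $1\le i\le n-1$ of the $x_i$; once these are straightened out the verification reduces to the case analysis above.
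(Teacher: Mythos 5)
Your proof is correct and takes essentially the same route as the paper: both directions are handled the same way, and the key construction $x_i := \max(\lambda_i,\lambda_{i+1})$ is identical to the paper's. Your write-up of the backward direction is in fact cleaner than the paper's (which contains a couple of typographical slips in the displayed inequalities), but the underlying argument is the same.
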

\begin{proof}
First, if such $x_1,\dots x_{n-1}$ exist, then
\[
\lambda_{i-1}\oplus\lambda_{i} = (x_{i-1}\oplus x_{i-2})\oplus(x_{i+1}\oplus x_i) \le x_{i-1}\oplus x_{i} = \lambda_i.
\]

Now suppose that $\lambda_1,\dots, \lambda_n$ satisfy (\ref{eq:tnnh}). 
Let $x_i := \max(\lambda_i,\lambda_{i+1})$.
We have that $x_i\oplus x_{i-1}= \max(\lambda_i,\lambda_{i+1}) \oplus \max(\lambda_{i+1},\lambda_i)$ is larger than $\lambda_i$ because both terms are larger than $x_i$. 
But for both terms to be larger than $x_i$, we would have that $\lambda_{i+1}\oplus \lambda_{i-1} > \lambda_i$, which contradicts (\ref{eq:tnnh}).
\end{proof}

\begin{theorem}
\label{thm:tnnh}
A valuated flag matroid of hollow rank is $\tnn$ if and only if its lambda-values satisfy (\ref{eq:tnnh}). 
\end{theorem}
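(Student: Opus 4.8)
The plan is to prove both directions by combining the graphical parametrization of $\TFl^\tnn(1,n-1;n)$ from \Cref{prop:param} with the combinatorial translation criterion of \Cref{lemma:posequ}. For the ``only if'' direction, suppose $\fvm = \mu(\Gamma_\fM, w, \fS)$ is realized as a valuated flag gammoid on the graph $\Gamma_\fM$ for a Bruhat matroid $\fM=(M,N)$. The lambda-value $\lambda_i = \fvm(i)\odot\fvm([n]\backslash i)$ is finite exactly when $i$ is neither a loop of $M$ nor a coloop of $N$, i.e. when $i$ lies in one of the consecutive runs of $C$ from \Cref{sec:gammoid}. The key computation is to read off $\lambda_i$ from the edge weights of $\Gamma_\fM$: $\fvm(i)$ is the minimum-weight path from $i$ to the unique sink $\overline{a_\alpha}\in S_1$, and $\fvm([n]\backslash i)$ is the minimum-weight linking from $[n]\backslash i$ onto $S_2 = [\overline n]\backslash\{\overline{b_1}\}$. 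Because of the layered structure of $\Gamma_\fM$ (the only non-trivial edges are the ascending edges $a_j\to a_{j+1}$ and the descending edges $b_j\to\overline{b_{j+1}}$), these minima telescope, and one obtains that $\lambda_i$ depends on an alternating sum of consecutive ascending/descending weights. I expect that setting $x_i$ (for $i$ in the interior of a run of $C$) to be a suitable partial sum of those weights will exhibit $\lambda_i = x_i \oplus x_{i-1}$ with $x_0 = x_n = \infty$ at the ends of each run; \Cref{lemma:posequ} then gives \eqref{eq:tnnh}. One must also check that the inequality \eqref{eq:tnnh} holds trivially across the $\lambda_i=\infty$ spots (loops/coloops), which is immediate since $\infty$ absorbs under $\oplus$.

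For the ``if'' direction, suppose $\fvm\in\FlDr(1,n-1;n)$ has lambda-values satisfying \eqref{eq:tnnh}. First I would check that the support $\fM$ of $\fvm$ has a polytope $P_\fM$ that is a Bruhat polytope: by \Cref{thm:bruhat} this amounts to the symbol sequence $\alpha(\fM)$ having no isolated $*$, and an isolated $*$ at position $i$ would mean $\lambda_i$ is finite while $\lambda_{i-1}=\lambda_{i+1}=\infty$, contradicting \eqref{eq:tnnh} (which forces $\lambda_i\ge\lambda_{i-1}\oplus\lambda_{i+1}=\infty$). Hence $P_\fM$ is Bruhat and \Cref{prop:param} applies: it suffices to produce edge weights $w$ on $\Gamma_\fM$ with $\fvm = \mu(\Gamma_\fM,w,\fS)$. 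Using \Cref{lemma:posequ}, pick $x_1,\dots,x_{n-1}$ (taking $x_i=\max(\lambda_i,\lambda_{i+1})$ as in its proof) with $\lambda_i = x_i\oplus x_{i-1}$, and then reverse-engineer the ascending and descending edge weights of $\Gamma_\fM$ from consecutive differences of the $x_i$'s so that the telescoping computation of the forward direction reproduces exactly $\fvm(i)$ and $\fvm([n]\backslash i)$ for all $i$. Since translation does not change the lambda-values (they are homogeneous, cf.\ the remark at the end of \Cref{sec:background}) and valuated flag gammoids are closed under translation (\Cref{prop:gamma_trans}), one has freedom to normalize, which should make the bookkeeping of signs in the alternating sums manageable.

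The main obstacle I anticipate is the second direction's gluing step: showing that the edge weights recovered run-by-run on $C$ actually assemble into a single graph realizing \emph{all} of $\fvm$ simultaneously, and that the quotient relation (\ref{eq:troplucker}) satisfied by $\fvm$ is exactly what guarantees consistency between the rank-$1$ constituent $\mu$ and the corank-$1$ constituent $\nu$ across the whole ground set (not just within one run). In other words, the delicate point is that the flag Dressian membership of $\fvm$, together with \eqref{eq:tnnh}, pins down $\fvm$ uniquely given its lambda-values and support — so that matching the lambda-values and support of $\mu(\Gamma_\fM,w,\fS)$ to those of $\fvm$ is enough. I would establish this uniqueness lemma first (a hollow valuated flag matroid in $\FlDr(1,n-1;n)$ is determined up to translation by its support and its lambda-vector $\lambda\in\TP$), after which both directions reduce to the clean telescoping computation on $\Gamma_\fM$ outlined above.
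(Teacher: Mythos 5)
Your proposal follows essentially the same route as the paper's proof: parametrize $\TFl^{\tnn}(1,n-1;n)$ via \Cref{prop:param}, compute the lambda-values from the two types of linkings in $\Gamma_\fM$, and translate back and forth via \Cref{lemma:posequ}. The one place you diverge is in framing the ``if'' direction around an explicit uniqueness lemma (support plus $\lambda$-vector determine $\fvm$ up to translation); the paper sidesteps this by normalizing $\mu$ to the trivial valuation and $\nu_{b_1}=0$ at the outset, after which $\lambda_i=\nu_{[n]\backslash i}$ directly pins down $\nu$ on the support, making the uniqueness automatic rather than a separate step — your lemma is true and harmless, but the normalization makes the bookkeeping and the run-by-run gluing you worry about come out for free.
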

\begin{proof}
Let $\fvm=(\mu,\nu)$ be a valuated matroid satisfying (\ref{eq:tnnh}).
Then its support $\fM=(M,N)$ satisfies (\ref{eq:bruh}). 
By \Cref{prop:param}, $\fvm$ is in $\TFl(1,n-1;n)$ if and only if it can be represented as a valuated flag gammoid by $\Gamma_\fM$ as constructed in \Cref{sec:gammoid}.

First, as total non-negativity is invariant under translation, we can translate $\fvm$ so that $\mu$ is a $(0,\infty)$-vector. 
This has the effect that the edges of the form $(a_i,a_{i+1})$ must be all of weight $0$, since $\mu_{a_j}$ is the sum of all these edges for $i\ge j$. 
Furthermore, by scaling $\nu\in \TP$ we can assume that $\nu_{b_1}=0$.
This way $\lambda_i= \nu_{[n]\backslash i}$ is the minimum weight of all linkings from $[n]\backslash i$ to $S_2$.

Let $y_i$ be the weight of the edge $(b_i,\overline{b_{i+1}})$.
Let $x_i=y_1+\dots + y_{i-1}$, so in particular $x_1 = 0$.
Clearly we can recover the edge weights $y_i$ from the $x_i$ and viceversa. 
Let us consider the possible linkings of $[n]\backslash i $ into $S_2$. 
\begin{enumerate}[I.]
	\item If $i=b_j$ for some $j$, then we can have the path $(b_k,\overline{b_{k+1}})$ for every $k<j$ and $(m,\overline{m})$ for every sink $\overline{m}\in S_2$ not yet used.  
This linking has weight $x_j$.
	\item If $i=a_l$ for some $l>2$ and $a_{l-1}=b_j$ for every some $j$, then we can have the path $(a_{l-1},a_l,\overline{a_l})$, the path $(b_k,\overline{b_{k+1}})$ for every $k< j$ and $(m,\overline{m})$ for every sink $\overline{m}\in S_2$ not yet used.
Again, this linking has weight $x_j$. 
\end{enumerate}
No other linking exists. 
We call these linkings type I and type II respectively.

If $b_i$ is a loop of $M$, then we have that $[n]\backslash b_i$ has only linking onto $S_2$ of type I and weight $x_i$.
Now consider a consecutive segment $b_i,b_i+1\dots,b_i+k$ of $C$, the set of non-loops of $M$ and non-coloops of $N$.
In other words $\lambda_{b_i},\dots,\lambda_{b_i+k}$ are finite but $\lambda_{j-1}=\lambda_{k+1}=\infty$.
Equivalently, $b_i+j=b_{i+j}$ for $0\le j<k$ but not for $j=k$ or $j=-1$. 
We compute $\lambda_{b_i+j}$ considering the following cases
\begin{itemize}
	\item Because $b_i$ is the first element of a consecutive segment of $C$, the largest non-loop of $M$ before $b_i$ is either a coloop in $N$ or the last element in another consecutive segment of $C$. Either way, we have that $b_i$ does not have a linking of type II, it only has a linking of type I with weight $x_i$, so $\lambda_{b_i}=x_i$.
	\item As $b_i+k$ is the last element of the cosecutive segment, it is not equal to some $b_m$, so it can not have a linking of type I.
It only has a linking of type II which has weight $x_{i+k-1}$, so $\lambda_{b_i+k}=x_{i+k-1}$.
	\item If $1\le j <k$, then $b_i+j$ has a linking of type I of weight $x_i$ and linking of weight $x_{i+j}$ and a linking of type II of weight $x_{i+j-1}$, so $\lambda_{b_i+k}=x_{i+j}\oplus x_{i+k-1}$.
\end{itemize}
By \Cref{lemma:posequ}, $\lambda_{b_i},\dots,\lambda_{b_i+k}$ must satisfy (\ref{eq:tnnh}).
If every consecutive segment of finite lambda values satisfy (\ref{eq:tnnh}), then all of them satisfy (\ref{eq:tnnh}).
Conversely, if all $\lambda_1,\dots,\lambda_n$ satisfy (\ref{eq:tnnh}), then again we can choose appropriate $x_i$ on each consecutive segment as in the proof of \Cref{lemma:posequ}.  
Namely, $x_i=\max(\lambda_{b_i},\lambda_{b_i+1})$ if both $\lambda_{b_i}$ and $\lambda_{b_i+1}$ are finite and $b_i+1$ is not the last element of the consecutive segment in $C$.
If $b_i$ is the last element of a consecutive segment of $C$ or it is a loop of $M$, then $x_i= \nu_{[n]\backslash b_{i+1}}$.
This way we can recover $x_i$ and hence recover the edge weights $y_i$ which will give us the valuated matroid $\fvm$ by \Cref{lemma:posequ}. 
\end{proof}

\begin{example}
\label{ex:totally}
Recall the flag matroid $\fM$ from \Cref{ex:graph}.
Consider $\fvm$ be the valuated flag matroid with support in $\fM$ where the valuation on the bases of $M$ are all equal to $0$ and 
\begin{align*}
\mu_{[n]\backslash 1} = 0 \quad & \mu_{[n]\backslash 2} = 0 \quad \mu_{[n]\backslash 3} = 2 \quad  \mu_{[n]\backslash 4} = 1\\
\mu_{[n]\backslash 5} = 1 \quad & \mu_{[n]\backslash 6} = 3 \quad \mu_{[n]\backslash 7} = \infty \quad  \mu_{[n]\backslash 8} = -1.
\end{align*}
Hence we have that $\lambda=(0:0:\infty:1:1:3:\infty:\infty)\in \TT\PP^7$ satisfies (\ref{eq:tnnh}).
Recall the graph $\Gamma_\fM$ pictured in \Cref{fig:graph}.
As the valuation on $M$ is trivial, we can assume all horizontal edges have weight 0 (as well as the vertical weights).
Thus we just need to determine the weights of the diagonal edges.
Using the procedure described above, we get the edge weights
\[
y_{1,\bar{3}} = 2 \quad y_{3,\bar{4}} = -1 \quad  y_{4,\bar{5}} = 2 \quad  y_{5,\bar{8}} = -4.
\]
If we had a valuated flag matroid $\fvm$ where the valuation on $M$ is not trivial, then we can translate to a valuated flag matroid $\fvm'$ where the valuation in $M$ is trivial to obtain edge weights as above and then use the gauge action as described in \Cref{prop:gamma_trans} to obtain a graphical presentation of $\fvm$ as a totally positive valuated flag matroid. 
\end{example}


\begin{theorem}\label{thm:posh}
A valuated flag matroid $\fvm$ of hollow rank is a flag positroid if and only if $\fvm\in \FlDr^{\ge 0}(1,n-1;n).$
\end{theorem}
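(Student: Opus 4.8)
The plan is to establish the chain of implications $i \Rightarrow ii \Rightarrow iii$ and $iii \Rightarrow i$, where $iii$ is the concrete condition $\bigoplus_{i\text{ odd}}\lambda_i = \bigoplus_{i\text{ even}}\lambda_i$ from \eqref{eq:posh}. The implications $i \Rightarrow ii$ and $ii \Rightarrow iii$ are essentially formal: $i \Rightarrow ii$ because positivity is preserved under taking faces of the subdivision (the initial flag matroids $\fvm^x$ of a realizable positive flag valuated matroid are again realizable and positive, as realizability is witnessed over $\C_{\ge 0}$ and specializing $t$ preserves sign conditions on minors), while $ii \Rightarrow iii$ is the hollow case of part 3 of \Cref{thm:arb} (each cell of a flag positroid subdivision satisfies the signed tropical Pl\"ucker relation \eqref{eq:oriented}, which for hollow rank is exactly \eqref{eq:posh}). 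So the real content is the converse $iii \Rightarrow i$: given $\fvm\in\FlDr^{\ge0}(1,n-1;n)$, exhibit a flag of subspaces over $\C_{\ge 0}$ (equivalently, a matrix whose top $d_i\times d_i$ minors are positive) realizing $\fvm$.

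First I would reduce to the $\tnn$ machinery already built. Condition \eqref{eq:posh} says the minimum among the $\lambda_i$ is attained at two indices of opposite parity; in particular, if $\lambda_j$ attains the minimum then so does some $\lambda_i$ with $i$ of the other parity, but not necessarily an adjacent one — so \eqref{eq:posh} is strictly weaker than the Bruhat condition \eqref{eq:bruh} and weaker than the $\tnn$ condition \eqref{eq:tnnh}. The idea is: a flag positroid need not be $\tnn$, but it should decompose. Concretely, I would argue that the support $\fM=(M,N)$ of a $\fvm$ satisfying \eqref{eq:posh} is a flag positroid (unvaluated case) by a direct construction — one can still build a matrix realizing $\fM$ over $\RR$ with positive top minors, since for hollow rank the only obstruction to positivity is the single incidence relation, and \eqref{eq:posh} (with trivial valuation: the minimum $0$ attained by both an odd and an even $\lambda_i$) is exactly the sign condition needed. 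Then, for the valuated statement, I would realize $\fvm$ over $\C_{\ge 0}$ by writing down an explicit $n\times n$ matrix whose entries are monomials $\pm t^{c_{ij}}$ (or sums thereof) with exponents chosen from the $\lambda$-data, check that the top $1\times 1$ minor and the top $(n-1)\times(n-1)$ minors have valuations $\mu(i)$ and $\nu([n]\setminus i)$ respectively, and that their leading coefficients are positive — the parity bookkeeping in \eqref{eq:posh} being precisely what forces no cancellation of the two leading terms in the incidence relation, hence positivity of the relevant minors.

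The main obstacle I anticipate is the realizability step over $\C_{\ge 0}$ when $\fvm$ is \emph{not} $\tnn$: there the flag gammoid parametrization of \Cref{prop:param} does not directly apply, so I cannot just read off edge weights as in the proof of \Cref{thm:tnnh}. The cleanest route is probably to first prove the unvaluated case ($\fM$ a flag positroid $\iff$ \eqref{eq:posh} holds with trivial valuation) by an explicit matrix of $0$'s and $\pm 1$'s, and then bootstrap to the valuated case: every initial flag matroid $\fvm^x$ satisfies \eqref{eq:posh} (since \eqref{eq:posh} is a closed condition stable under taking initial forms), hence is a flag positroid, and then invoke the fact that a flag valuated matroid all of whose initial pieces are flag positroids and which itself satisfies the signed tropical Pl\"ucker relations lifts to $\C_{\ge 0}$ — this last lifting is exactly the hollow instance of \cite[Question 1.6]{BEW} that the theorem is claiming to settle, so it must be proved by hand here, presumably by the monomial-matrix construction sketched above with exponents on a generic perturbation of $\lambda$ inside the positive region, where the subdivision is a common refinement of the pieces and positivity is inherited cell by cell.
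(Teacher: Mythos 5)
Your final plan lands on the same approach as the paper: after correctly observing that the flag-gammoid/$\tnn$ parametrization of \Cref{prop:param} cannot be used here, and that the ``first do the unvaluated case, then bootstrap to valuations'' route would be circular, you fall back to exhibiting an explicit matrix over $\C_{\ge 0}$ with monomial entries whose exponents are read off from the $\lambda$-data. That is exactly what the paper does.

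Where you stop short is the execution detail that makes the construction actually go through. Your sketch says the parity condition \eqref{eq:posh} ``forces no cancellation of the two leading terms'', but \eqref{eq:posh} by itself only says the minimum among the $\lambda_i$ is attained at indices of both parities; it does not automatically prevent cancellation among the many terms in the cofactor expansion of a given $(n-1)\times(n-1)$ minor. The paper handles this by first normalizing (translate so $\mu$ is trivial, choose a representative of $\nu$ with two zeros at the minimizing indices $i$ and $j$ of opposite parity), and then building the matrix so that the first row carries a \emph{large} scalar (namely $n$) at columns $i$ and $j$ and scalars $0$ or $1$ elsewhere, with the remaining rows giving an almost-identity pattern plus a single monomial $\pm t^{\nu([n]\setminus k)}$ in column $i$ or $j$ depending on parity of $k$. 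The large entry makes one term strictly dominate every other term of the determinant (which all have absolute leading coefficient $1$), so the sign and valuation of each top minor are under control. That dominance trick is the missing ingredient in your outline. Also, a minor bookkeeping point: the paper uses an $(n-1)\times n$ matrix, not $n\times n$, since realizing a flag positroid of rank $(1,n-1)$ only requires positivity of the top $1\times 1$ and top $(n-1)\times(n-1)$ minors.
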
 
\begin{proof}
One of the directions is immediate for arbitrary rank, so we do the other direction.
Suppose $\lambda$ satisfies (\ref{eq:posh}) for hollow valuated flag matroid $\fvm(\mu,\nu)$.
Let $i$ be an even index where $\lambda$ achieves the minimum and $j$ an odd index where $\lambda$ achieves the minimum.
As the lambda-values, as well as realizability by positive linear spaces is invariant under translation, we suppose again that $\mu$ is the trivial valuation on its support.
Moreover, we can take a representative of $\nu$ that has non-negative entries and that $\nu([n]\backslash i)=\nu([n]\backslash j)=0$.

Let us build a matrix $A\in \RR\ldb t \rdb^{(n-1)\times n}$ that realizes $\fvm$ as a flag positroid.
For the first row of $A$ let
\[
A_{1,k} = \begin{cases} n \quad \text{if $k=i$ or $k=j$}\\
0 \quad \text{if $k$ is a loop of $M$}\\
1 \quad \text{otherwise.}
\end{cases}
\]
Let $A_{f(k),k}=1$ for every $k\in [n]\backslash\{i,j\}$ where $f$ is the order preserving function $[n]\backslash\{i,j\} \to [n-1]\backslash 1$.
Let $A_{k,i}=\alpha t^{\nu([n]\backslash k)}$ for every even $k$ and $A_{k,j}=\alpha t^{\nu([n]\backslash k)}$ for every odd $k$ where
\[
\alpha = \begin{cases} 1 \quad \text{if $j<i$}\\
-1\quad \text{if $i<j$}
\end{cases}
\]
(where $t^\infty := 0$).
Finally, let every other entry of $A$ equal to $0$.
See (\ref{eq:matrix}) for an example of such a matrix.

Let us show that the row spans of $A$ realizes $\fvm$ as flag positroid.
Clearly the first row has non-negative coordinates and realizes $\mu$.
Now let us look at the $(n-1)\times (n-1)$ minors.
Consider the minor given by deleting the column $k$ with $k$ different from $i$ or $j$.
Suppose $k$ is even.
Any function $\sigma: [n]\backslash k  \to [n-1]$ on non-zero entries must have that $\sigma(i)=k$ as it is the only column with a non-zero entry in that row.
Now $\sigma(j)$ could be $1$ or any odd value.
If $\sigma(j)= 1$, then for rest of the coordinates we have $\sigma(l) = f(l)$. 
Because of how $\alpha$ was defined, the resulting sign for $\sigma$ is going to be positive and the product of the corresponding entries is going to be equal to $nt^{\nu([n]\backslash k)}$.
If $\sigma(j)\ne 1$, the valuation of the resulting product is at least $\nu([n]\backslash k)$. 
This is because we would need $\sigma(j)$ not to be a loop of $M$ for the entry $A_{1,\sigma(j)}$ to be different to $0$ and then $\nu([n]\backslash \sigma(j))=\lambda_{\sigma(j)}$ is non-negative. 
However, the absolute value of the leading coefficient of such term would be $1$ 
So the term  $nt^{\nu([n]\backslash k)}$ dominates all others.
Hence this minor is positive and has valuation $\nu([n]\backslash k)$ as desired.

A similar reasoning shows that this is also the case for $k$ odd.
If $k=i$, then again the dominating term in the minor corresponds to $\sigma(j)=1$ and $\sigma(l)=f(l)$ and this term is equal to $n$.
Similarly for $k=j$, so this matrix realizes $\fvm$ as a flag positroid.
\end{proof}

Notice that valuated flag positroids always induce flag positroids subdivisions.
On the other hand if a hollow valuated flag matroid $\fvm$ does not satisfy (\ref{eq:posh}), then clearly none of maximal cells in the subdivision induced by $\fvm$ are going to be flag positroid polytopes.
Therefor \Cref{thm:posh} implies part 3. of \Cref{thm:hollow}.
With this we complete our proof of \Cref{thm:hollow} and we have now completely determined all positivity notions with explicit conditions on the Pl\"ucker coordinates for the hollow case.

\begin{example}
Recall the flag valuated matroid from \Cref{ex:totally}. 
In this case we have $i=2$ and $j=1$ and the matrix

\begin{equation}
A= \begin{pmatrix}
	8 & 8 & 0 & 1 & 1 & 1 & 1 & 0\\
	0 & t^2 & 1 & 0 & 0 & 0 & 0 & 0\\
	t^1 & 0 & 0 & 1 & 0 & 0 & 0 & 0\\
	0 & t^1 & 0 & 0 & 1 & 0 & 0 & 0\\
	t^3 & 0 & 0 & 0 & 0 & 1 & 0 & 0\\
	0 & 0 & 0 & 0 & 0 & 0 & 1 & 0\\
	t^{-1} & 0 & 0 & 0 & 0 & 0 & 0 & 1
	\end{pmatrix} 
\label{eq:matrix}.
\end{equation}
realizes it as a flag positroid.
This matrix may not have all minors positive, though it is possible to construct such a matrix by recovering the parametrization of \cite{MR} from the weighted graph constructed in \Cref{ex:totally}.
\end{example}

\section{Arbitrary rank}
\label{sec:arb}

In this section we turn our attention to arbitrary rank $\fd$ and use \Cref{thm:hollow} to prove \Cref{thm:arb}.
Before we continue to the proof, we want to comment on why we have to include Pl\"ucker pairs $(S,T)$ corresponding to Pl\"ucker relations, i.e. when $S\in {[n] \choose d_i-1}$ and  $T\in {[n] \choose d_i+1}$.

In \cite[Theorem 3.10]{JLLO} it is shown that the incidence relations 
on some functions $\mu:{[n]\choose d}\to \RR$ and $\nu:{[n]\choose d+1}\to \RR$ imply the Pl\"ucker relations on $\mu$ and $\nu$. 
We could ask whether the same holds for valuated matroids whose rank differ by more than $1$.
However, the answer is no, as the following example shows:
\begin{example}
Consider the $\mu:{[5]\choose 2}: \RR$ where $\mu(ij)=0$ for $ij=12,23,34,45,15$, and an arbitrary positive number in the rest of the cases.
This function satisfies all incidence relations with $\nu=(0,0,0,0,0)\in \Dr(4,5)$.
However $\mu$ is not a valuated matroid; for example, the minimum in $\mu(12)\odot\mu(34)\oplus \mu(13)\odot\mu(24) \oplus \mu(14)\odot\mu(24)$ is achieved only in $\mu(12)\odot\mu(34)$.
\end{example}

Furthermore the positive version of the three-term incidence relations imply the positive Pl\"ucker relations for the complete flag variety of full support \cite[Lemma 6.7]{JLLO}. 
We could ask whether this extends to partial flags, adding the assumption that the functions are actually valuated matroids, because of the previous example.
Again, the answer is no. 

\begin{example}
Let $\fvm\in \Dr(2,4;5)$ where $\fvm(13)= 1$ and $\fvm(I)=0$ for every other set $I\in {5\choose 2}\cup{5\choose 4}$.
This valuated flag matroid satisfies the positive relations ((\ref{eq:posa}) for $S\in {5\choose 1}$ and $T=[5]$), but $\mu_1$ does not satisfy the positive Pl\"ucker relations as
$\fvm(13)\odot\fvm(24)>\fvm(12)\odot\fvm(34)\oplus\fvm(14)\odot\fvm(23)$.
\end{example}

Because the positive three-term Pl\"ucker relations are equivalent to a valuated matroid being a valuated positroid \cite{arkani2017positive,SpeyerWilliams:2021}, all of the positivity concepts lisited in the introduction imply that the constituents of $\fvm$ are valuated positroids.
So from now on we will assume that $\fvm$ has constituents that are valuated positroids and we will only consider Pl\"ucker pairs $(S,T)$ that correspond to incidence relations (i.e. $T\in {[n]\choose d_{i+1}+1}$)

\begin{definition}
\label{def:hp}
Let $(S,T)$ be a Pl\"ucker pair of $\fvm$. 
Then the \bemph{hollow projection} of $\fvm$ induced by $(S,T)$ is the valuated flag matroid 
\[
\eta_{S,T}(\fvm) := (\mu_i/S|_T, \mu_{i+1}/(S\cap T)|_T)
\] 
\end{definition}
The adjective hollow is to emphasize that the result is a matroid over $T\backslash S$ of hollow rank.
To see that it is actually a flag matroid, notice that $(\mu_i/S|_T, \mu_{i+1}/S|_T,\mu_{i+1}/(S\cap T)|_T)$ is a flag matroid, the first quotient is due to the fact that taking minors preserves valuated matroid quotient \cite[Corollary 4.3.2]{BEZ} and the second quotient is due to \Cref{prop:bez}. 

The reason to consider hollow projections is the following:

\begin{lemma}
\label{lemma:eta}
The lambda values of $\eta_{S,T}(\fvm)$ equal the lambda values of $\fvm$ for the Pl\"ucker pair $(S,T)$.
\end{lemma}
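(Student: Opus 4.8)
The plan is to unwind the definitions of the minor operations on valuated matroids and match the quadratic monomials term by term. Recall that for a Pl\"ucker pair $(S,T)$ of $\fvm$ with $S\in{[n]\choose d_i-1}$ and $T\in{[n]\choose d_{i+1}+1}$, the lambda-values are $\lambda^{S,T}_j = \mu_i(St_j)\odot\mu_{i+1}(T\backslash t_j)$ where $t_1<\dots<t_k$ enumerate $T\backslash S$. On the other side, $\eta_{S,T}(\fvm) = (\mu_i/S|_T,\ \mu_{i+1}/(S\cap T)|_T)$ is a hollow flag matroid on the ground set $T\backslash S$, which has $k = |T\backslash S|$ elements; since $\mu_i/S$ has rank $d_i-(d_i-1)=1$ and $\mu_{i+1}/(S\cap T)$ has rank $d_{i+1}+1-|S\cap T|$, one checks that restricting to $T$ gives exactly hollow rank $(1,k-1)$, so the lambda-values of $\eta_{S,T}(\fvm)$ are indexed by the same $t_1<\dots<t_k$.

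First I would compute $\mu_i/S|_T$ on singletons: by the definition of contraction, $(\mu_i/S)(\{t_j\}) = \mu_i(S\cup\{t_j\}) = \mu_i(St_j)$, and restriction does not change this value since $t_j\in T$. Next I would compute $\mu_{i+1}/(S\cap T)|_T$ on the corank-one sets of the hollow flag matroid on $T\backslash S$, i.e.\ on the sets $(T\backslash S)\backslash\{t_j\}$: by the definition of successive contraction, $(\mu_{i+1}/(S\cap T))(B) = \mu_{i+1}(B\cup(S\cap T))$ for $B$ with $B\supseteq\varnothing$ and $|B| = (d_{i+1}+1) - |S\cap T|$; taking $B = (T\backslash S)\backslash\{t_j\}$ gives $B\cup(S\cap T) = (T\backslash\{t_j\})$ because $S\cap T$ together with $T\backslash S$ partitions $T$ (using $S\subseteq T$, so actually $S\cap T = S$ and $T\backslash(S\cap T) = T\backslash S$). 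Hence this value is $\mu_{i+1}(T\backslash t_j)$, and again restriction to $T$ leaves it unchanged since $B\subseteq T\backslash S\subseteq T$. Multiplying (tropically, i.e.\ adding) the two contributions yields precisely $\mu_i(St_j)\odot\mu_{i+1}(T\backslash t_j) = \lambda^{S,T}_j$, which is the lambda-value of $\eta_{S,T}(\fvm)$ at index $j$ by \Cref{def:lambda} applied to the hollow flag matroid. Since this holds for every $j$, the two lambda-vectors agree in $\TT\PP^{k-1}$.

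The one point requiring a little care — and the closest thing to an obstacle — is bookkeeping which elements get contracted versus deleted, and making sure the restriction $|_T$ is harmless. The notation $\fvm\backslash([n]\backslash T)$ deletes everything outside $T$; since every set appearing in the lambda-values ($St_j$ and $T\backslash t_j$) is contained in $T$ (again using $S\subseteq T$), deletion does not affect these particular values, it only discards bases we never look at. Also one should note that the second constituent of $\eta_{S,T}(\fvm)$ is contracted by $S\cap T = S$ rather than by $S$ directly only as a matter of convention in \Cref{def:hp}; since $S\subseteq T$ these are the same set, so no discrepancy arises. I would state this cleanly and observe that, as remarked after \Cref{def:lambda}, the lambda-values live in tropical projective space and are only defined up to a global tropical scalar, so the equality is an equality of points in $\TT\PP^{|T\backslash S|-1}$, which is exactly what is needed for the subsequent application of \Cref{thm:hollow} to each hollow projection.
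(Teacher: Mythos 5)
Your proposal is correct and takes essentially the same approach as the paper: the paper's proof is the one-line observation that $\mu'(t_j)=\fvm(St_j)$ and $\nu'((T\backslash S)\backslash t_j)=\fvm(T\backslash t_j)$, which is precisely the term-by-term matching you carry out in detail by unwinding the definitions of contraction and restriction.
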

\begin{proof}
Let $(\mu',\nu') = \eta_{S,T}(\fvm)$. 
The lemma simply follows from the fact that for all $j\in T\backslash S$ we have that $\mu'(j)=\fvm(Sj)$ and $\nu'(T\backslash Sj) = \fvm(T\backslash j)$.
\end{proof}

\Cref{thm:arb} follows directly from \Cref{thm:hollow}, \Cref{lemma:eta} and the following theorem:

\begin{theorem}
Let $(S,T)$ be a Pl\"ucker pair of flag valuated matroid $\fvm$ where $|S|=d_i-1$ and $|T|=d_{i+1}+1$.
\begin{enumerate}
	\item If $\fvm$ is $\tnn$, then $\eta_{S,T}(\fvm)$ is $\tnn$.
	\item If $\fvm$ induces a Bruhat subdivision, then $\eta_{S,T}(\fvm)$ induces a Bruhat subdivision.
	\item If $\fvm$ induces a positroid subdivision, then $\eta_{S,T}(\fvm)$ induces a positroid subdivision.
\end{enumerate}
\end{theorem}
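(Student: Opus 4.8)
The plan is to show that each of the three positivity properties is preserved under the operations that constitute $\eta_{S,T}$, namely contraction by $S$ (resp.\ $S\cap T$) followed by restriction to $T$. The key tool is the fact, recalled in the excerpt, that minors (deletions and contractions) of totally non-negative valuated flag matroids are again totally non-negative \cite[Proposition 3.5]{ardila2016positroids}, and similarly for flag positroids. So for part (1), I would argue directly: if $\fvm$ is $\tnn$, then $\mu_i/S|_T$ and $\mu_{i+1}/(S\cap T)|_T$ are each minors of $\tnn$ constituents, and the pair they form is a minor of the $\tnn$ flag valuated matroid $(\mu_i, \mu_{i+1})$ in the sense of \Cref{def:hp} (using that $(\mu_i/S, \mu_{i+1}/S, \mu_{i+1}/(S\cap T))$ is a flag matroid as observed after \Cref{def:hp}). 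Hence $\eta_{S,T}(\fvm)$ is a minor of a $\tnn$ flag valuated matroid and is therefore $\tnn$.

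For parts (2) and (3), the statement is about subdivisions rather than about single (unvaluated) positivity, so the argument must be at the level of the induced mixed subdivision. The approach I would take is the following. A maximal cell of $\subdiv_{\eta_{S,T}(\fvm)}$ is an initial flag matroid $\eta_{S,T}(\fvm)^x$ for some $x \in \RR^{T\backslash S}$; I want to exhibit it as $\eta_{S,T}(\fvm^y)$ for a suitably chosen $y \in \RR^n$ extending $x$ (taking $y$ very large on coordinates outside $T\backslash S$ in the directions that force the contraction/deletion pattern — large positive on $[n]\backslash T$ forces these to behave as deleted, large negative on $S$ forces contraction, and appropriate signs on $S\cap T$). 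This is essentially the standard commutation of taking initial matroids with taking minors \cite[Corollary 4.3.2]{BEZ}; one checks $\init_x(\mu/I\backslash J) = \init_y(\mu)/I\backslash J$ for an $x$ extending $y$ by suitable large coordinates. Then, since $\fvm^y$ is a Bruhat polytope (resp.\ flag positroid polytope) whenever $\fvm$ induces a Bruhat (resp.\ flag positroid) subdivision, part (1) applied to the \emph{unvaluated} flag matroid $\fvm^y$ — or rather the analogous statements that Bruhat polytopes and flag positroid polytopes are preserved under the minor operations defining $\eta_{S,T}$ — shows each maximal cell $\eta_{S,T}(\fvm^y) = \eta_{S,T}(\fvm)^x$ is a Bruhat (resp.\ flag positroid) polytope, which is exactly what it means for $\eta_{S,T}(\fvm)$ to induce a Bruhat (resp.\ flag positroid) subdivision.

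I would need the auxiliary facts that the class of Bruhat polytopes and the class of flag positroid polytopes are each closed under the minor operations appearing in $\eta_{S,T}$. For flag positroids this follows again from \cite[Proposition 3.5]{ardila2016positroids}. For Bruhat polytopes, one can use \Cref{prop:richard} together with the fact that $\tnn$-ness of the flag matroid is equivalent to its polytope being a (twisted) Bruhat polytope, so closure under minors of the Bruhat-polytope property reduces to closure of $\tnn$-ness of unvaluated flag matroids under minors — again \cite[Proposition 3.5]{ardila2016positroids}. (Alternatively, for deletion/restriction one can invoke \Cref{prop:faces} after observing that restricting to $T$ corresponds, on the polytope side, to passing to a face once the other coordinates are pinned, while contraction is dual to a face.)

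The main obstacle I expect is bookkeeping the precise choice of the extension $y$ of $x$ so that $\init_y(\mu_i) = (\mu_i/S|_T)$-cell and $\init_y(\mu_{i+1}) = (\mu_{i+1}/(S\cap T)|_T)$-cell \emph{simultaneously}, i.e.\ checking that one single generic perturbation $y$ on the $T\backslash S$ coordinates, together with one common choice of large values on the remaining coordinates, realizes both contractions/deletions at once on the two constituents — the subtlety being that $\mu_i$ is contracted along $S$ but $\mu_{i+1}$ only along $S\cap T$, so the large-coordinate pattern on $S\backslash T$ must be chosen to delete from $\mu_{i+1}$ while contracting from $\mu_i$. Once the compatibility of these sign/size choices is verified (a finite check using that $S\subseteq T$ forces $S\backslash T = \emptyset$, which actually trivializes this point), the rest is the routine commutation of initial matroids with minors plus the cited closure properties.
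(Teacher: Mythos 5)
Your high-level strategy lines up with the paper's: choose a perturbation $\tilde{x}\in\RR^n$ extending $x\in\RR^{T\backslash S}$ (large on $S$, small on $[n]\backslash(S\cup T)$) so that initial flag matroids commute with the minor operations defining $\eta_{S,T}$, then invoke closure of $\tnn$-ness/Bruhat polytopes/flag positroids under minors. For part~(1) your shortcut of directly citing closure under minors is a legitimate abbreviation; the paper instead constructs the completing flag of linear spaces $(L_{d_i}',\dots,L_{d_{i+1}}'=L_1'',\dots,L_{k+1}'')$ explicitly, which makes the closure property self-contained rather than appealed to.

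The substantive gap is the treatment of $S\backslash T$. You correctly identify that contracting $S$ from $\mu_i$ while only contracting $S\cap T$ from $\mu_{i+1}$ would be a problem for the ``one perturbation $\tilde{x}$'' commutation argument, since a coordinate in $S\backslash T$ would have to be pushed to $+\infty$ for one constituent and to $-\infty$ for the other — and you dismiss it on the grounds that \Cref{def:lambda} requires $S\subseteq T$. But the paper does \emph{not} treat this as vacuous: \Cref{def:hp} deliberately writes $\mu_{i+1}/(S\cap T)|_T$ rather than $\mu_{i+1}/S|_T$, and the proof orders the elements $a_1<\dots<a_k$ of $S\backslash T$ and builds around them. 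In fact this is where the real work of parts (2) and (3) lives. The single perturbation $\tilde{x}$ (large on all of $S$) only yields the pair $(\mu_i^{\tilde{x}}/S|_T,\ \mu_{i+1}^{\tilde{x}}/S|_T)$ — it does \emph{not} directly produce $N=(\mu_{i+1}/(S\cap T)|_T)^x$ as a minor of $\fvm^{\tilde{x}}$. To bridge the gap the paper interpolates with the consecutive-rank chain $\mu_j''=\mu_{i+1}/(S\backslash\{a_l\mid l<j\})|_T$ running from $\mu_{i+1}/S|_T$ up to $\mu_{i+1}/(S\cap T)|_T$, shows that this chain satisfies the three-term positive incidence relations because $\mu_{i+1}$ is a positroid, glues it to the consecutive-rank completion $\fM'$ of $(M,\mu_{i+1}^{\tilde{x}}/S|_T)$, and then applies \cite[Theorem~6.10]{JLLO} (for part 2) or \cite[Theorem~2.16]{JO} and \Cref{thm:posh} (for part 3) to the combined flag. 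None of this appears in your proposal. So under the paper's intended generality (which the applications to the positive Pl\"ucker--incidence relations require), your argument stops short: it covers the easy $S\subseteq T$ case and explicitly abdicates the case that the paper's proof is actually built to handle.
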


\begin{proof}
\begin{enumerate}
\item Suppose $\fvm$ is totally non negative. So there is a flag of linear spaces $\fL=(L_1,\dots,L_n)\in \Fl^\tnn(\fn,C^n)$ which restricted to $\fd$ realizes $\fvm$ as a $\tnn$ valuated flag matroid. 
Consider $\pi_T$ the projection to the $T$ coordinates and 
\[
L_j'= \pi_T(L_j\cap\{x_k=0 \mid k\in S\}).
\]
We have that $(L_{d_i}',\dots, L_{d_{i+1}}')$ is a flag of linear spaces of consecutive rank $(1,\dots,|T|-|S|-1)$ with positive Pl\"ucker coordinates.
Now order the elements of $S\backslash T$, say $a_1,\dots,a_k$ and let 
\[L_j''= \pi_T(L_{d_{i+1}}\cap\{x_k=0 \mid k\in S\backslash\{a_l \mid l< j\}\}).\] 
We obtain another flag of linear spaces $(L_1'',\dots,L_{k+1}'')$ of linear spaces of consecutive rank $(|T|-|S|-1,\dots, |T\backslash S|-1)$. 
Notice that $L_1'' =  L_{d_{i+1}}'$, $L_1'$ is a realization of $\mu'$ and $L_{k+1}''$ is a realization of $\nu'$.
So 
\[
(L_{d_i}',\dots, L_{d_{i+1}}'= L_1'',\dots,L_{k+1}'')
\]
is a complete flag with all of its Pl\"ucker coordinates positive showing that $\eta_{S,T}(\fvm)$ is totally non-negative.

	\item Suppose $\fvm$ induces a Bruhat subdivision. 
	Let $x\in \RR^{T\backslash S}$ and $(M,N):= \eta_{S,T}(\fvm)^x$.
	

Let $\tilde{x}\in \RR^n$ be a vector such that $\tilde{x}|_{T\backslash S} = x$, $x_i$ is sufficiently large for $i\in S$ and sufficiently small for $i\in [n]\backslash (S\cup T)$. 
For such $\tilde{x}$, we have that $\mu_{d_i}^{\tilde{x}}/S|_T = M$ and $\mu_{d_i+1}^{\tilde{x}}/S|_T = (\mu_{d_i+1}/S|_T)^x$. 
As $\fvm^{\tilde{x}}$ is a Bruhat polytope, so is $\fvm^{\tilde{x}}/S|_T$.
In particular we have that $(M, (\mu_{d_i+1}/S|_T)^x)$ can be completed to flag positroid $\fM'=(M_1',\dots,M_{|T|-|S|-1}')$ of consecutive rank.
In particular, $\fM'$ satisfies the 3-term incidence relations.

Let $\mu_j'' = \mu_{d+1}/(S\backslash\{a_l \mid l< j\})|_T$ where again $S\backslash T = \{a_1,\dots a_k\}$.
If $\fvm$ induces a Bruhat subdivision then $\mu_{d+1}$ induces a positroid subdivision and hence satisfies the 3-term positive Pl\"ucker relations.  
This implies that $\fvm'':=(\mu_1'',\dots,\mu_{k+1}'')$ and therefor $\fM''=\fvm''^x =(\mu_1''^x,\dots,\mu_{k+1}''^x)$ satisfy the 3-term positive incidence relations.

So the flag matroid $(M_1',\dots,M_{|T|-|S|-1}' = \mu_1''^x, \dots, \mu_{k+1}''^x)$ satisfies all the 3-term positive incidence relations and by \cite[Theorem 6.10]{JLLO} it has a Bruhat polyotpe.
But $M_1'=M$ and $\mu_{k+1}''^x = N$, so the restriction of this flag matroid to hollow rank is $(M,N)$. 
So $P_{(M,N)}$ is a Bruhat polytope. 
As $x\in \RR^{T\backslash S}$ was arbitrary, we have that $\eta_{S,T}(\fvm)$ induces a Bruhat subdivision.

\item This proof is similar to the one above.
Suppose $\fvm$ induces a positroid subdivision.
Let $x\in \RR^{T\backslash S}$ and $(M,N):= \eta_{S,T}(\fvm)^x$.
Let $\tilde{x}$ be as before. We have that $(\mu_{d_i}^{\tilde{x}},\mu_{d_i+1}^{\tilde{x}})= (M, \mu_{d_i+1}^{\tilde{x}}/S|_T)$ is a flag positroid and it satisfies the positive incidence relations (\ref{eq:posa}).
Let $\fM''$ be as before. We have that $\fM''$ is a flag positroid, so it also satisfies the positive incidence relations.
So we have that $(M,\mu_{d_i+1}^{\tilde{x}}/S|_T= \mu_1''^x,\dots,\mu_{k+1}''^x=N)$ is the non negative flag Dressian by \cite[Theorem 2.16]{JO}, (see \cite[Remark 3.11]{BEW}).
This implies that $(M,N)\in \FlDr^{\ge0}(1,|T\backslash  S|-1,n)$ and, by \Cref{thm:posh}, it is a flag positroid.
As $x$ was arbitrary, $\eta_{S,T}(\fvm)$ induces a positroid subdivision. 
 
\end{enumerate}
\end{proof}

Now we discuss some information that should be taken into account regarding \Cref{conj:main}.

\begin{proposition}
\label{prop:unvaluated}
\begin{enumerate}
	\item If part 2. of \Cref{conj:main} holds for unvaluated matroids, then it holds for all valuated matroids.
	\item If \ref{enum:dress} implies \ref{enum:positroid} for unvaluated matroids, then \ref{enum:dress} implies \ref{enum:positroid} holds for all valuated matroids.
\end{enumerate}

\end{proposition}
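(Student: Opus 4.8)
The plan is to reduce both parts to a statement about the \emph{initial} flag matroids $\fvm^x=(\mu_1^x,\dots,\mu_s^x)$ occurring as cells of the mixed subdivision $S_\fvm$. By the description recalled in \Cref{sec:mixed}, every cell of $S_\fvm$ has this form for some $x$, and each such cell is an (unvaluated) flag matroid; hence $\fvm$ induces a Bruhat subdivision (resp.\ a flag positroid subdivision) precisely when, for every $x$, the unvaluated flag matroid $\fvm^x$ has a Bruhat polytope (resp.\ a flag positroid polytope), which for an unvaluated matroid is the same as inducing a Bruhat (resp.\ flag positroid) subdivision. So for part~(1) it suffices to show: if (\ref{eq:brua}) holds for $\fvm$ at every Pl\"ucker pair, then it holds for every $\fvm^x$ at every Pl\"ucker pair --- the converse is the second part of \Cref{thm:arb}; and for part~(2) it suffices to show: if $\fvm\in\FlDr^{\ge0}(\fd,n)$ then $\fvm^x\in\FlDr^{\ge0}(\fd,n)$ for every $x$.

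The technical core is a comparison of $\lambda$-values. Fix a Pl\"ucker pair $(S,T)$ and let $\mu,\mu'$ be the two constituents of $\fvm$ it involves (possibly equal), with $m(x),m'(x)$ the minima of $\mu(A)-x\cdot e_A$, $\mu'(A')-x\cdot e_{A'}$ over sets of the appropriate ranks; put $c:=m(x)+m'(x)+x\cdot e_S+x\cdot e_T$. From $\mu(St_j)\ge m(x)+x\cdot e_S+x_{t_j}$ and $\mu'(T\setminus t_j)\ge m'(x)+x\cdot e_T-x_{t_j}$ one gets $\lambda_j^{S,T}(\fvm)=\mu(St_j)\odot\mu'(T\setminus t_j)\ge c$ for every $j\in T\setminus S$, with equality exactly when $St_j\in\BB(\mu^x)$ and $T\setminus t_j\in\BB((\mu')^x)$, that is, exactly when $\lambda_j^{S,T}(\fvm^x)=0$; otherwise $\lambda_j^{S,T}(\fvm^x)=\infty$. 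Consequently the set $Z$ of indices on which $\lambda^{S,T}(\fvm^x)$ vanishes is either empty, or else $c=\min_k\lambda_k^{S,T}(\fvm)$ and $Z$ is exactly the set of indices at which $\lambda^{S,T}(\fvm)$ attains its minimum.

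Given this, the combinatorial conditions transfer cleanly. If $Z=\emptyset$ then all $\lambda_j^{S,T}(\fvm^x)$ equal $\infty$, so (\ref{eq:brua}) and (\ref{eq:oriented}) hold for $\fvm^x$ at $(S,T)$ vacuously. If $Z\neq\emptyset$: for part~(1), an index in $Z$ is an index at which $\lambda^{S,T}(\fvm)$ attains its minimum, so by (\ref{eq:brua}) for $\fvm$ one of its neighbours also attains the minimum and hence lies in $Z$, which is exactly (\ref{eq:brua}) for $\fvm^x$; for part~(2), (\ref{eq:oriented}) for $\fvm$ forces the minimum of $\lambda^{S,T}(\fvm)$ to be attained at some odd and some even index, so $Z$ meets both parities and $\bigoplus_{i\text{ odd}}\lambda_i^{S,T}(\fvm^x)=0=\bigoplus_{i\text{ even}}\lambda_i^{S,T}(\fvm^x)$, i.e.\ (\ref{eq:oriented}) for $\fvm^x$. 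Feeding each $\fvm^x$ --- now known to satisfy the relevant relation and to be a genuine unvaluated flag matroid --- into the assumed unvaluated case (part~2 of \Cref{conj:main} for part~(1); the implication \ref{enum:dress}$\Rightarrow$\ref{enum:positroid} for part~(2)) shows that every cell of $S_\fvm$ is a Bruhat polytope, resp.\ a flag positroid polytope, as desired. I do not expect a serious obstacle; the one place needing care is the case split on whether $Z$ is empty, i.e.\ whether the two constituents' minima are realized simultaneously along a common $St_j$, and the verification that the ``minimum-achieving'' bookkeeping (including the endpoint conventions $\lambda_0=\lambda_{|T\setminus S|+1}=\infty$) goes through verbatim.
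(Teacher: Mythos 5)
Your proof is correct and follows essentially the same route as the paper: both reduce to showing that the $\lambda$-conditions (\ref{eq:brua}) and (\ref{eq:posa}) pass from $\fvm$ to every initial flag matroid $\fvm^x$, via the observation that, for each Pl\"ucker pair, $\lambda^{S,T}(\fvm^x)$ is the $\{0,\infty\}$-indicator of the set of minimizers of $\lambda^{S,T}(\fvm)$ (or identically $\infty$). The paper compresses your direct computation of the threshold $c=m(x)+m'(x)+x\cdot e_S+x\cdot e_T$ into a one-line appeal to translation invariance of the $\lambda$-values, but the content is the same.
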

\begin{proof}
Recall that the lambda values are invariant under translation. 
So for every $x\in \RR^n$ and for each Pl\"ucker pair $(S,T)$, the lambda values of the flag matroid $\fvm^x$ for $(S,T)$ is either the result of replacinig by $0$ every term that achieves the minimum and everything else is replaced by $\infty$, or they all become $\infty$. 
Either way, if $\fvm$ satisfies (\ref{eq:brua}) then  $\fvm^x$ satisfies (\ref{eq:brua}) and if $\fvm$ satisfies (\ref{eq:posa}) then  $\fvm^x$ satisfies (\ref{eq:posa}).
%
\end{proof}

In other words, part 2. of \Cref{conj:main} follows from a generalization of \Cref{thm:bruhat}, while \ref{enum:dress} $\leftrightarrow$ \ref{enum:positroid} is equivalent to \cite[Question 1.6]{BEW}.
Notice that parts 1. and 2. of \Cref{conj:main} are the same at the level of matroids. Hence we have the following

\begin{corollary}
\label{coro:conjimp}
Part 1. of \Cref{conj:main} implies part 2. 
\end{corollary}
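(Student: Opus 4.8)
The plan is to reduce to the case of unvaluated flag matroids, where the two parts of \Cref{conj:main} turn into the same assertion, and then apply the hypothesis. By part (1) of \Cref{prop:unvaluated}, in order to prove the second part of \Cref{conj:main} it suffices to prove it for unvaluated flag matroids, so I would fix a flag matroid $\fM$ and regard it as the trivially valuated flag matroid $\fvm$, with $\fvm(B)=0$ on the bases of each constituent and $\fvm(B)=\infty$ otherwise.

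Two elementary ingredients are then needed. The first is that, for such a trivially valuated $\fvm$, ``$\fvm$ induces a Bruhat subdivision'' is equivalent to ``$\fvm$ is totally non-negative''. Indeed, the subdivision induced by $\fvm$ is the trivial one $\{P_\fM\}$, so the former condition just says that $P_\fM$ is a Bruhat polytope; by \Cref{prop:richard} a totally non-negative flag matroid has a Bruhat polytope, and conversely a flag in the (nonempty) Richardson cell attached to a Bruhat polytope realizes, by \Cref{prop:richard} again, a flag matroid with that polytope, which must be $\fM$ since a flag matroid is recovered from its polytope. The second ingredient is that, for a trivially valuated flag matroid, the conditions (\ref{eq:tnna}) and (\ref{eq:brua}) coincide: each lambda-value $\lambda^{S,T}_i$ is $0$ or $\infty$; at an index where $\lambda^{S,T}_i=\infty$ both conditions hold automatically, at an index where $\lambda^{S,T}_i=0$ each of them says exactly that one of the neighbours $\lambda^{S,T}_{i-1},\lambda^{S,T}_{i+1}$ is also $0$, and when every lambda-value of a Pl\"ucker pair equals $\infty$ condition (\ref{eq:brua}) is vacuous as well.

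Finally I would chain these equivalences with the hypothesis, namely the first part of \Cref{conj:main} applied to $\fvm$, obtaining that ``$\fvm$ induces a Bruhat subdivision'' $\iff$ ``$\fvm$ is totally non-negative'' $\iff$ ``(\ref{eq:tnna}) holds for every Pl\"ucker pair'' $\iff$ ``(\ref{eq:brua}) holds for every Pl\"ucker pair'', which is precisely the second part of \Cref{conj:main} for $\fM$. I do not expect a real obstacle here, since the substance is already contained in \Cref{prop:unvaluated} and \Cref{prop:richard}; the only point requiring a careful statement is the converse in the first ingredient, i.e. that a (twisted) Bruhat polytope is the polytope of a totally non-negative flag matroid, together with the standard fact that a flag matroid is determined by its polytope.
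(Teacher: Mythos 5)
Your proposal is correct and takes essentially the same route as the paper: reduce to the unvaluated case via \Cref{prop:unvaluated}(1), observe that for trivially valuated flag matroids the two conditions (\ref{eq:tnna}) and (\ref{eq:brua}) coincide and ``induces a Bruhat subdivision'' coincides with ``totally non-negative'' via \Cref{prop:richard}, and chain the equivalences. The paper states this rather tersely (``parts 1.\ and 2.\ of \Cref{conj:main} are the same at the level of matroids''); you have simply spelled out what that remark is appealing to.
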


%
%
%

\begin{remark}
It may be tempting to believe that to prove \Cref{conj:main} it is enough for flags with 2 constituents, as one could then fill the gaps independently.
However, if one we have that $(\mu_1,\mu_2)$ and $(\mu_2,\mu_3)$ can be realized by flags $(L_1,L_2)$ and $(L_2',L_3)$ with positive Pl\"ucker coordinates, they might not satisfy that $L_2=L_2'$.
\end{remark}


We finish with the following questions:

\begin{question}
If we have a regular Bruhat subdivision, does there always exist a $\tnn$ flag valuated matroid that induces it? 
\end{question}

\begin{question}
Is the (positive) flag Dressian generated by (positive) Pl\"ucker-incidence relations of minmal length (i.e. three term Pl\"ucker relations plus incidence relations with $S\subseteq T$)?
\end{question}


\small
\bibliography{bruhat.bib}
\bibliographystyle{alpha}

\end{document}